\theoremstyle{plain}
\newtheorem{theorem}{Theorem}[section]
\newtheorem{lemma}{Lemma}[section]
\newtheorem{proposition}{Proposition}[section]
\theoremstyle{definition}
\newtheorem{definition}{Definition}[section]
\theoremstyle{remark}
\newtheorem{remark}{Remark}[section]
\theoremstyle{example}
\renewcommand{\u}{{ u}}            % vector u
\renewcommand{\xi}{{s}}
\newcommand{\e}{\varepsilon}          % Epsilon
\newcommand{\la}{\langle}             % Left angular bracket
\newcommand{\ra}{\rangle}          % Right angular bracket
\newcommand{\R}{\mathbb{R}}           % 1-d Space
\renewcommand{\H}{\mathbb{H}}         % Half-space
\newcommand{\supp}{\mbox{supp}}
\newcommand{\rhob}{\bar{\rho}}
\newcommand{\HH}{\mathcal{H}}
\newcommand{\SPH}{\mathbb{S}^1}
\newcommand{\Prob}{\,{\rm Prob}\,}
\newcommand{\sh}{{\sharp}}
\def\be{\begin{equation}}
\def\ee{\end{equation}}
\def\bes{\begin{equation*}}
\def\ees{\end{equation*}}
\def\bc{\begin{cases}}
\def\ec{\end{cases}}
\numberwithin{equation}{section}
\begin{document}
\title[Vanishing Viscosity Limit of the Navier-Stokes Equations]
{Vanishing Viscosity Limit of the Navier-Stokes Equations to the
Euler Equations for Compressible Fluid Flow}
\author{Gui-Qiang Chen \and Mikhail Perepelitsa}
\address{Gui-Qiang G. Chen, Mathematical Institute, University of Oxford,
         Oxford, OX1 3LB, UK; and Department of Mathematics, Northwestern University,
         Evanston, IL 60208, USA}
\email{\tt gqchen@math.northwestern.edu}
\address{Mikhail Perepelitsa,
Department of Mathematics, University of Houston, 651 PGH, Houston,
Texas 77204-3008; and Department of Mathematics, Vanderbilt
University, Nashville, TN 37240, USA.} \email{\tt misha@math.uh.edu}
\keywords{Vanishing viscosity limit, Navier-Stokes equations, Euler
equations, real physical viscosity, compressible fluid flow,
convergence, entropy solutions, compensated compactness,
measure-valued solutions, Young measures, reduction}
\subjclass[2000]{Primary: 35B30, 35Q30,35L65,
35L45,35B35,76N17,76N15; Secondary: 35L80,35Q35,35B25}
\date{\today}
\thanks{}
%________________________

\begin{abstract}
We establish the vanishing viscosity limit of the Navier-Stokes
equations to the isentropic Euler equations for one-dimensional
compressible fluid flow. For the Navier-Stokes equations, there
exist no natural invariant regions for the equations with the real
physical viscosity term so that the uniform sup-norm of solutions
with respect to the physical viscosity coefficient may not be
directly controllable and, furthermore, convex entropy-entropy flux
pairs may not produce signed entropy dissipation measures. To
overcome these difficulties, we first develop uniform energy-type
estimates with respect to the viscosity coefficient for the
solutions of the Navier-Stokes equations and establish the existence
of measure-valued solutions of the isentropic Euler equations
generated by the Navier-Stokes equations. Based on the uniform
energy-type estimates and the features of the isentropic Euler
equations, we establish that the entropy dissipation measures of
the solutions of the Navier-Stokes equations for weak
entropy-entropy flux pairs, generated by compactly supported $C^2$
test functions, are confined in a compact set in $H^{-1}$, which
lead to the existence of measure-valued solutions that are confined
by the Tartar-Murat commutator relation.
A careful characterization of the unbounded support of the
measure-valued solution confined by the commutator relation yields
the reduction of the measure-valued solution to a Delta mass, which
leads to the convergence of solutions of the Navier-Stokes equations
to a finite-energy entropy solution of the isentropic Euler
equations.
\end{abstract}
\maketitle
% ------------------------------------------------------------------------------------

\section{Introduction}

We are concerned with the vanishing viscosity limit of the motion of
a compressible viscous, barotropic fluid in Eulerian coordinates
$\mathbb{R}_+^2:=[0, \infty)\times\mathbb{R}$, which is described by
the system of Navier-Stokes equations:
\begin{equation}
\label{Eq:NS-1}
\begin{cases}
\rho_t{}+{}(\rho\u)_x{}={}0,\\
(\rho\u)_t{}+{}(\rho\u^2{}+{}p)_x{}={}\e\u_{xx}{},\\
\end{cases}
\end{equation}
with the initial conditions:
\begin{equation}
\label{Eq:Initial-Conditions} \rho(0,x){}={}\rho_0(x),\quad
\u(0,x){}={}\u_0(x)
\end{equation}
such that $\lim_{x\to\pm\infty}(\rho_0(x), u_0(x))=(\rho^\pm,
u^\pm)$, where $\rho$ denotes the density, $u$ represents the
velocity of the fluid when $\rho>0$, $p$ is the pressure, $m=\rho u$
is the momentum, and $(\rho^\pm, u^\pm)$ are constant states with
$\rho^\pm>0$.
The physical viscosity coefficient $\e$ is restricted to $\e\in (0,
\e_0]$ for some fixed $\e_0>0$. The pressure $p$ is a function of
the density through the internal energy $e(\rho)$:
$$
p(\rho) =\rho\, e '(\rho)-e(\rho) \qquad \mbox{for}\,\, \rho\ge 0.
$$
In particular, for a polytropic perfect gas,
\begin{equation}\label{Eq:Pressure}
p(\rho)=\kappa\rho^\gamma, \qquad
e(\rho)=\frac{\kappa}{\gamma-1}\rho^\gamma,
\end{equation}
where $\gamma>1$ is the adiabatic exponent and, by the scaling, the
constant $\kappa$ in the pressure-density relation may be chosen as
$\kappa=\frac{(\gamma-1)^2}{4\gamma}$ without loss of generality.
One of the fundamental features of this system is that strict
hyperbolicity fails when $\rho\to 0$.

The vanishing artificial/numerical viscosity limit to the isentropic
Euler equations with general $L^\infty$ initial data has been
studied by DiPerna \cite{DiPerna}, Chen \cite{Chen,Chen2}, Ding
\cite{Ding}, Ding-Chen-Luo \cite{DCL}, Lions-Perthame-Souganidis
\cite{LPS}, and Lions-Perthame-Tadmor \cite{LPT}  via the method of
compensated compactness. Also see DiPerna \cite{DiPerna-g}, Morawetz
\cite{Mor}, Perthame-Tzavaras \cite{PerthameTzavaras}, and Serre
\cite{Serre-g} for the vanishing artificial/numerical viscosity
limit to general $2\times 2$ strictly hyperbolic systems of
conservation laws. The vanishing artificial viscosity limit to
general strictly hyperbolic systems of conservation laws with
general small $BV$ initial data was first established by
Bianchini-Bressan \cite{BB} via direct BV estimates with small
oscillation.
Also see LeFloch-Westdickenberg \cite{LW} for the existence of
finite-energy solutions to the isentropic Euler equations with
finite-energy initial data for the case $1<\gamma\le 5/3$.

The idea of regarding inviscid gases as viscous gases with vanishing
real physical viscosity can date back the seminal paper by Stokes
\cite{Stokes} and the important contribution of Rankine
\cite{Rankine}, Hugoniot \cite{Hugoniot}, and Rayleigh
\cite{Rayleigh} (cf. Dafermos \cite{Dafermos}). However, the first
rigorous convergence analysis of vanishing physical viscosity from
the Navier-Stokes equations \eqref{Eq:NS-1} to the isentropic Euler
equations was made by Gilbarg \cite{Gi} in 1951, when he established
the mathematical existence and vanishing viscous limit of the
Navier-Stokes shock layers.
For the convergence analysis confined in the framework of piecewise
smooth solutions; see Hoff-Liu \cite{HL},
G\`{u}es-M\'{e}tivier-Williams-Zumbrun \cite{GMWZ}, and the
references cited therein. The convergence of vanishing physical
viscosity with general initial data was first studied by
Serre-Shearer \cite{SS} for a $2\times 2$ system in nonlinear
elasticity with severe growth conditions on the nonlinear function
in the system.

In this paper,
we first develop new uniform estimates with respect to the real
physical viscosity coefficient for the solutions of the
Navier-Stokes equations with the finite-energy initial data and
establish the $H^{-1}$-compactness of weak entropy dissipation
measures of the solutions of the Navier-Stokes equations for any
weak entropy-entropy flux pairs generated by compactly supported
$C^2$ test functions. With these, the existence of measure-valued
solutions with possibly unbounded support is established, which are
confined by the Tartar-Murat commutator relation with respect to two
pairs of weak entropy-entropy flux kernels. Then we establish the
reduction of measure-valued solutions with unbounded support for the
case $\gamma\ge 3$ and, as corollary, we obtain the existence of
global finite-energy entropy solutions of the Euler equations with
general initial data for $\gamma\ge 3$. We further simplify the
reduction proof of measure-valued solutions with unbounded support
for the case $1<\gamma\le 5/3$ in LeFloch-Westdickenberg \cite{LW}
and extend to the whole interval $1<\gamma<3$ . With all of these,
we establish the first convergence result for the vanishing physical
viscosity limit of solutions of the Navier-Stokes equations to a
finite-energy entropy solution of the isentropic Euler equations
with finite-energy initial data. We remark that, combining
Propositions 6.2 and 7.2 in this paper with the uniform estimates in
\cite{LW}, we obtain the existence of finite-energy solutions to the
isentropic Euler equations with geometric effects
for the case $\gamma>5/3$, which is also supplement to the existence
result in \cite{LW} for $1<\gamma\le 5/3$.

\medskip
The organization of this paper is as follows. In Section 2, we
analyze some basic properties of weak entropy-entropy flux pairs in
the unbounded phase plane and introduce the notion of finite-energy
entropy solutions. In Section 3, we make several uniform estimates
for the solutions of the Navier-Stokes equations which are
independent of the real physical viscosity coefficient $\e>0$. These
estimates are essential for establishing the convergence of
vanishing viscosity limit of the solutions of the Cauchy problem
\eqref{Eq:NS-1}--\eqref{Eq:Initial-Conditions} for the Navier-Stokes
equations. In Section 4, we establish the $H^{-1}$--compactness of
entropy dissipation measures for solutions of
\eqref{Eq:NS-1}--\eqref{Eq:Initial-Conditions} with initial data
\eqref{Eq:Initial-Conditions} for any weak entropy-entropy flux
pairs generated by compactly supported $C^2$ test functions. In
Section 5, we employ the estimates in Sections 3--4 to construct the
measure-valued solutions with possibly unbounded support determined
by the solutions of the Navier-Stokes equations \eqref{Eq:NS-1} with
initial data \eqref{Eq:Initial-Conditions} and show that the
measure-valued solutions are confined by the Tartar-Murat commutator
relation for any two pairs of weak entropy-entropy flux kernels. In
Sections 6--7, we prove that any connected component of the support
of the measure-valued solutions must be bounded when $\gamma>1$,
which reduces to the case for the measure-valued solutions with
bounded support. Finally, in Section 8, we conclude the strong
convergence of vanishing viscosity limit of solutions of the
Navier-Stokes equations to a finite-energy entropy solution of the
isentropic Euler equations.

\section{Entropy for the Isentropic Euler Equations}

In this section we analyze some basic properties of weak entropy
pairs in the unbounded phase plane and introduce the notion of
finite-energy entropy solutions of the isentropic Euler equations
with the form:
\begin{equation} \label{E-1}
\begin{cases}
{}\rho_t{}+{} (\rho u)_x{}={}0,\\
{}(\rho u)_t{}+{}(\rho u^2+{}p)_x{}={}0.\\
\end{cases}
\end{equation}
System \eqref{E-1} is an archetype of nonlinear hyperbolic systems
of conservation laws:
$$
U_t + F(U)_x =0.  \label{1.3}
$$
For our case, $U=(\rho, m)^\top$ and $F(U)=(m,
\frac{m^2}{\rho}+p)^\top$ for $m=\rho u$.

For $\gamma>1$, the eigenvalues of system \eqref{E-1} are
\begin{equation}
\lambda_j =u + (-1)^j\theta \rho^\theta, \qquad j=1,2, \label{7.5}
\end{equation}
and the Riemann invariants are
\begin{equation}
w_j =u+(-1)^{j-1}\rho^\theta, \qquad j=1,2, \label{7.8}
\end{equation}
where $\theta=\frac{\gamma-1}{2}$. {}From \eqref{7.5},
$$
\lambda_2-\lambda_1=2\theta\rho^\theta \to 0 \qquad\mbox{as}\,\,
\rho\to 0.
$$
Therefore, system \eqref{E-1} is strictly hyperbolic when $\rho>0$.
However, near the vacuum $\rho=0$, the two characteristic speeds of
\eqref{E-1} may coincide and the system be nonstrictly hyperbolic.

\medskip
A pair of mappings $(\eta, q): \R_+^2:=\R_+\times\R\to \R^2$ is
called an entropy-entropy flux pair (or entropy pair for short) of
system \eqref{E-1} if $(\eta, q)$ satisfy the $2\times 2$ hyperbolic
system:
\begin{equation}
\nabla q(U)=\nabla\eta(U)\nabla F(U). \label{7.1.1.1}
\end{equation}
Furthermore, $\eta(\rho,m)$ is called a weak entropy if
\begin{equation}
\eta\Big|_{\begin{subarray}{l}
                \rho=0\\
                u=m/\rho \,\,\text{fixed}
        \end{subarray}}=0.
\label{7.1.1.2}
\end{equation}
An entropy pair is said convex if the Hessian $\nabla^2\eta(\rho,m)$
is nonnegative
in the region under consideration.

For example, the mechanical energy (a sum of the kinetic and
internal energy) and the mechanical energy flux
\begin{equation}
\eta^*(\rho,m)=\frac{1}{2}\frac{m^2}{\rho}+ e(\rho), \qquad
q^*(\rho,m)=\frac{1}{2}\frac{m^3}{\rho^2}+m e'(\rho)
\label{mech-energy}
\end{equation}
form a special entropy pair; $\eta^*(\rho,m)$ is convex for any
$\gamma>1$
in the region $\rho\ge 0$.

\medskip
Let $(\bar{\rho}(x),\bar{u}(x))$ be a pair of smooth monotone
functions satisfying $(\bar{\rho}(x), \bar{u}(x))=(\rho^\pm, u^\pm)$
when $\pm x\ge L_0$ for some large $L_0>0$.
The total mechanical energy for \eqref{Eq:NS-1} in
$\R$ with respect to the pair $(\bar{\rho},\bar{u})$ is
\begin{equation}
\label{def:energy} E[\rho, u](t):=\int_{\R}\Big(\eta^*(\rho,
m)-\eta^*(\bar{\rho}, \bar{m})-\nabla\eta^*(\bar{\rho},
\bar{m})\cdot (\rho-\bar{\rho}, m-\bar{m})\Big)\, dx\ge 0,
\end{equation}
where $\bar{m}=\bar{\rho}\bar{u}$.

In the coordinates $(\rho,u)$, any weak entropy function
$\eta(\rho,\rho u)$ is governed by the second-order linear wave
equation:
\begin{equation}
\begin{cases}
{}\eta_{\rho\rho}-\frac{p'(\rho)}{\rho^2}\eta_{uu}=0, \qquad \rho>0,\\
{}\eta|_{\rho=0}=0.
\end{cases}
\label{entropy:1}
\end{equation}
Therefore, any weak entropy pair $(\eta,q)$ can be represented by
\begin{equation}
\begin{cases}
{}\eta^\psi(\rho,\rho u)=\int_{\R}\chi(\rho; s-u)\psi(s)\,ds,\medskip\\
{}q^\psi(\rho,\rho u)=\int_{\R}\big(\theta s+(1-\theta)u)\chi(\rho;
s-u)\psi(s)\,ds
\end{cases}
\label{entropy:3}
\end{equation}
for any continuous function $\psi(s)$, where the weak entropy kernel
$\chi(\rho, s-u)$ is determined by
\begin{equation}
\begin{cases}
{}\chi_{\rho\rho}-\frac{p'(\rho)}{\rho^2}\chi_{uu}=0,\\
{}\chi(0,u;s)=0,\quad \chi_\rho(0,u;s)=\delta_{u=s},
\end{cases}
\label{kernel:1}
\end{equation}
where $\delta_{u=s}$ is the Dirac mass concentrated at $u=s$.

This implies that, for the $\gamma$-law case, the weak entropy
kernel as the unique solution of \eqref{kernel:1} is
\begin{equation}\label{kernel}
\chi(\rho; s-u)=[\rho^{2\theta}-(s-u)^2]_+^\lambda,
\end{equation}
where $\lambda=\frac{3-\gamma}{2(\gamma-1)}>-\frac{1}{2}$. Then the
weak entropy pairs have the form:
\begin{eqnarray}
\eta^\psi(\rho,m)=\eta^\psi(\rho,\rho\u)
&=&\int_{\mathbb{R}}[\rho^{2\theta}{}-{}(\u-s)^2\big]_+^\lambda\psi(s)\,ds
  \nonumber\\
&=&\rho\int_{-1}^1\psi(u+\rho^\theta s)[1-s^2]^\lambda_+\,ds,
\label{eta}
\end{eqnarray}
\begin{eqnarray}
q^\psi(\rho,m)=q^\psi(\rho,\rho\u)&=&\int_{\mathbb{R}}(\theta
s{}+{}(1-\theta)\u)[\rho^{2\theta}{}-{}(\u-s)^2]_+^\lambda
\psi(s)\,ds\nonumber\\
&=&\rho\int_{-1}^1(u+\theta\rho^\theta s)\psi(u+\rho^\theta
s)[1-s^2]_+^\lambda ds. \label{q}
\end{eqnarray}

In particular, when $\psi_\sh(w)=\frac{1}{2}w|w|$,  the
corresponding entropy pair $(\eta^\sh, q^\sh):=(\eta^{\psi_\sh},
q^{\psi_\sh})$ satisfies that there exists $C>0$, depending only on
$\gamma
>1$, such that
\begin{equation}\label{2.0}
|\eta^\sh(\rho,m)|{}\leq
C\big(\rho|\u|^2{}+{}\rho^\gamma\big),\qquad
q^\sh(\rho,m){}\geq{}C^{-1}\big(\rho|\u|^3{}+{}\rho^{\gamma+\theta}\big),
\end{equation}
\begin{equation}\label{2.1a}
|\eta^{\sh}_m(\rho, m)|\le C(|u|+\rho^\theta), \qquad
|\eta^\sh_{mm}(\rho, m)|\le C\rho^{-1},
\end{equation}
and, regarding $\eta^\sh_m$ in the coordinates $(\rho, u)$,
\begin{equation}\label{2.1b}
|\eta^{\sh}_{mu}(\rho, \rho u)|\le C, \qquad
|\eta^{\sh}_{m\rho}(\rho, \rho u)|\le C\rho^{\theta-1}
\end{equation}
for all $\rho\ge 0$ and $u\in\R$ (also see, e.g. \cite{LPT}).

Furthermore, we have
\begin{lemma} For a $C^2$ function
$\psi{}:{}\mathbb{R}{}\to{}\mathbb{R}$, compactly supported on the
interval $[a,b]$, we have
$$
\supp\, \eta^\psi,\,\supp\, q^\psi{}\subset{}\left\{(\rho, m)=(\rho,
\rho u)\,:\, \rho^\theta{}+{}\u\geq a,\, u-\rho^\theta\leq b
\right\}.
$$
Furthermore, there exists a constant $C_\psi>0$ such that, for any
$\rho\ge 0$ and $u\in\R$, we have
\begin{enumerate}
\item[(i)] For $\gamma\in (1,3]$,
$$
|\eta^\psi(\rho, m)|+ |q^\psi(\rho, m)|{}\leq{}C_\psi \rho;
$$

\medskip
\item[(ii)] For $\gamma>3$,
$$
|\eta^\psi(\rho,m)|{}\leq{} C_\psi \rho,
\quad |q^\psi(\rho, m)|{}\leq{}C_\psi \rho\max\{1, \rho^\theta\};
$$

\medskip
\item[(iii)] If $\eta^\psi$ is considered as a function of
$(\rho, m), m=\rho u$, then
\[
|\eta^{\psi}_m(\rho, m)| +|\rho\, \eta^\psi_{mm}(\rho, m)|\le
C_\psi;
\]
and, if $\eta^\psi_m$ is considered as a function of $(\rho, u)$,
then
\[
|\eta^{\psi}_{mu}(\rho, \rho u)|
+|\rho^{1-\theta}\eta^\psi_{m\rho}(\rho, \rho u)|{}\leq{}C_\psi.
\]
\end{enumerate}

\end{lemma}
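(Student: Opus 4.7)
The plan is to read all three parts directly off the explicit representations~\eqref{eta} and~\eqref{q}, using only $\supp\psi\subset[a,b]$ and the elementary integrability $\int_{-1}^1[1-s^2]^\lambda_+\,ds<\infty$ (which holds since $\lambda>-\tfrac12$). The support statement is immediate: $\psi(u+\rho^\theta s)$ vanishes unless $u+\rho^\theta s\in[a,b]$ for some $s\in[-1,1]$, and the range of $u+\rho^\theta s$ over $[-1,1]$ is precisely $[u-\rho^\theta,\,u+\rho^\theta]$, forcing $u+\rho^\theta\ge a$ and $u-\rho^\theta\le b$. The bound $|\eta^\psi|\le C_\psi\rho$ in both (i) and (ii) follows at once from~\eqref{eta} by pulling $\rho$ outside and using $|\psi|\le\|\psi\|_\infty$ together with the above integrability.

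The nontrivial estimate is the one for $q^\psi$. I would introduce $w:=u+\rho^\theta s$ and invoke the algebraic identity $u+\theta\rho^\theta s=w-(1-\theta)\rho^\theta s$ to split $q^\psi=I+II$ with
\[
I=\rho\int_{-1}^1 w\,\psi(w)[1-s^2]^\lambda_+\,ds,\qquad II=-(1-\theta)\rho^{1+\theta}\int_{-1}^1 s\,\psi(w)[1-s^2]^\lambda_+\,ds.
\]
Since $|w\psi(w)|\le\max(|a|,|b|)\|\psi\|_\infty$ whenever the integrand is nonzero, $|I|\le C_\psi\rho$ for every $\gamma>1$. For $\gamma\in(1,3]$ (so $\theta\le 1$, $\lambda\ge 0$), the change of variables $s\mapsto w$ with $ds=\rho^{-\theta}dw$ turns $II$ into
\[
-(1-\theta)\rho^{1-\theta}\int_{[a,b]\cap[u-\rho^\theta,u+\rho^\theta]}(w-u)\psi(w)[1-(w-u)^2/\rho^{2\theta}]^\lambda_+\,dw;
\]
using $|w-u|\le\rho^\theta$ on this set together with $[1-\cdot]^\lambda\le 1$ (valid for $\lambda\ge 0$) gives $|II|\le C_\psi\rho$, establishing (i). For $\gamma>3$ the factor $[1-s^2]^\lambda_+$ is no longer pointwise bounded by $1$, so I would instead bound $|s\psi(w)|\le\|\psi\|_\infty$ directly in the original integral to get $|II|\le C_\psi\rho^{1+\theta}$; combining with $|I|\le C_\psi\rho$ yields $|q^\psi|\le C_\psi\rho\max\{1,\rho^\theta\}$, which is (ii).

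Part (iii) is then routine bookkeeping. Differentiating~\eqref{eta} through $u=m/\rho$ (so $\partial_m u=1/\rho$) yields
\[
\eta^\psi_m=\int_{-1}^1\psi'(u+\rho^\theta s)[1-s^2]^\lambda_+\,ds,\qquad \rho\,\eta^\psi_{mm}=\int_{-1}^1\psi''(u+\rho^\theta s)[1-s^2]^\lambda_+\,ds,
\]
each bounded by $C_\psi$ via the same integrability. Regarding $\eta^\psi_m$ as a function of $(\rho,u)$, one obtains $\eta^\psi_{mu}=\int\psi''(\cdot)[1-s^2]^\lambda_+\,ds=O(1)$ and $\eta^\psi_{m\rho}=\theta\rho^{\theta-1}\int s\,\psi''(\cdot)[1-s^2]^\lambda_+\,ds=O(\rho^{\theta-1})$, i.e., $\rho^{1-\theta}|\eta^\psi_{m\rho}|\le C_\psi$, matching the claim.

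The only genuinely non-routine step is the algebraic decomposition of $u+\theta\rho^\theta s$ together with the change of variables that converts the apparently $\rho^{1+\theta}$-sized term $II$ into an $O(\rho)$ bound when $\gamma\le 3$; without this cancellation one is stuck with a bound of the wrong order in $\rho$. Everything else is a direct consequence of the explicit kernel formulas~\eqref{eta}--\eqref{q} and the compactness of $\supp\psi$.
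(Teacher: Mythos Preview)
Your proof is correct. The support claim, the bound $|\eta^\psi|\le C_\psi\rho$, part~(ii), and part~(iii) are argued essentially as the paper does (the paper likewise writes out $\eta^\psi_m,\eta^\psi_{mm},\eta^\psi_{mu},\eta^\psi_{m\rho}$ explicitly from~\eqref{eta} and reads off the bounds).

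The one genuine divergence is the $|q^\psi|\le C_\psi\rho$ bound for $\gamma\in(1,3)$. The paper does not split $q^\psi$ via the identity $u+\theta\rho^\theta s=w-(1-\theta)\rho^\theta s$ or change variables; instead it goes back to the \emph{first} representation in~\eqref{q},
\[
q^\psi(\rho,m)=\int_{\mathbb R}\bigl(\theta s+(1-\theta)u\bigr)\,[\rho^{2\theta}-(u-s)^2]_+^\lambda\,\psi(s)\,ds,
\]
and uses that on the effective domain $s\in[a,b]\cap[u-\rho^\theta,u+\rho^\theta]$ one has $|\theta s+(1-\theta)u|\le C(1+\rho^\theta)$, $[\rho^{2\theta}-(u-s)^2]_+^\lambda\le\rho^{2\theta\lambda}$, and the length of the domain is $\le\min\{2\rho^\theta,\,b-a\}$; this gives $|q^\psi|\le C_\psi\rho^{2\theta\lambda+\theta}$, which equals $C_\psi\rho$ exactly because of the algebraic identity $2\theta\lambda+\theta=1$. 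So the step you flag as ``the only genuinely non-routine step'' is in fact avoidable: the unscaled kernel formula already carries the correct $\rho$-scaling once one recognizes $2\theta\lambda+\theta=1$. Your decomposition is a perfectly valid alternative---it trades that exponent identity for a change of variables and the pointwise bound $[1-\cdot]^\lambda\le1$ when $\lambda\ge0$---and has the minor advantage of being self-contained without appealing to the specific value of $2\theta\lambda+\theta$.
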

\begin{proof}
We first
notice that, if $(\rho,\u)$ is such that $\rho^\theta+\u{}<{}a$,
then $u+ \rho^\theta s{} <{}a$ for any $s\in[-1,1]$. Similarly, if
$u-\rho^\theta>b$, then $u+ s\rho^\theta{}>b$ for any $s\in[-1,1]$.

\medskip
For (i), since $\psi$ has compact support, it is clear from
\eqref{eta} that
$$
|\eta^\psi(\rho, m)|\le C_\psi \rho.
$$

When $\gamma=3$,
$$
q^\psi(\rho,m){} =\rho\int_{-1}^1(u+\rho s)\psi(u+\rho s)\,ds,
$$
which implies that $|q^\psi(\rho, m)|\le C_\psi \rho$ since $\psi$
has compact support.

When $\gamma<3$, we use the first formula in \eqref{q} to obtain
$$
|q^\psi(\rho, m)|\le C_\psi \rho^{2\theta\lambda+\theta} \le
C_\psi\rho.
$$

\medskip
For (ii), since $\psi$ has compact support, it is clear from the
formulas in \eqref{eta}--\eqref{q} that
$$
|\eta^\psi(\rho, m)|\le C_\psi \rho, \qquad |q^\psi(\rho, m)|\le
C_\psi \rho \max\{1, \rho^{\theta}\}.
$$

To prove (iv), we first notice that
$$
\eta^\psi_m(\rho, m) =\int\psi'(\frac{m}{\rho}+\rho^\theta
s)[1-s^2]_+^\lambda ds,
$$
which implies that $|\eta^\psi_m|\le C_\psi$. Furthermore, we have
$$
\eta^\psi_{mm}(\rho, m) =-\frac{1}{\rho}\int
\psi''(\frac{m}{\rho}+\rho^\theta s)[1-s^2]_+^{\lambda}ds,
$$
which yields that $|\rho\,\eta^\psi_{mm}(\rho, m)|\le C_\psi$.

When $\eta_m^\psi$ is regarded as a function of $(\rho, u)$,
$$
\eta^\psi_m(\rho, \rho u) =\int\psi'(u+\rho^\theta
s)[1-s^2]_+^\lambda ds.
$$
Then
\begin{equation}\label{2.16a}
\eta^\psi_{mu}(\rho, \rho u) =\int\psi''(u+\rho^\theta
s)[1-s^2]_+^\lambda ds,
\end{equation}
which leads to $|\eta^\psi_{mu}(\rho, \rho u)|\le C_\psi$; while
\begin{equation}\label{2.17a}
\eta^\psi_{m\rho}(\rho, \rho u)
=\theta\rho^{\theta-1}\int\psi''(u+\rho^\theta s)s [1-s^2]_+^\lambda
ds,
\end{equation}
which implies that $|\eta^\psi_{m\rho}(\rho, \rho u)|\le C_\psi
\rho^{\theta-1}$. This completes the proof.
\end{proof}

\begin{definition}\label{entropy-sol}
Let $(\rho_0, u_0)$ be given initial data with finite-energy with
respect to the end states $(\rho^\pm, u^\pm)$ at infinity, i.e.,
$E[\rho_0, u_0]\le E_0<\infty$. A pair of measurable functions
$(\rho, u): \R_+^2\to \R_+^2$ is called a finite-energy entropy
solution of the Cauchy problem \eqref{E-1} and
\eqref{Eq:Initial-Conditions} if the following holds:

\begin{enumerate}

\item[(i)] The total energy is bounded in time: There is a bounded function
$C(E,t)$, defined on $\mathbb{R}^+\times\mathbb{R}^+$ and continuous
in $t$ for each $E\in\mathbb{R}^+$, such that, for a.e. $t>0$,
$$
E[\rho, u](t)\le C(E_0,t);
$$

\item[(ii)] The entropy inequality:
$$
\eta^\psi(\rho, m)_t +q^\psi(\rho, m)_x \le 0
$$
is satisfied in the sense of distributions for the test function
$\psi(s)\in \{\pm 1, \pm s, s^2\}$;

\item[(iii)] The initial data $(\rho_0, u_0)$ are attained in the
sense of distributions.
\end{enumerate}
\end{definition}

The existence of entropy solutions in $L^\infty$ was established by
DiPerna \cite{DiPerna} for the case $\gamma=(N+2)/N, N\ge 5$ odd, by
Chen \cite{Chen} and Ding-Chen-Luo \cite{DCL} for the general case
$1<\gamma\le 5/3$ for usual gases, by Lions-Perthame-Tadmor
\cite{LPT} for the cases $\gamma\ge 3$, and by
Lions-Perthame-Souganidis \cite{LPS} for closing the gap $5/3<
\gamma<3$. The existence of finite-energy solutions was recently
established by LeFloch-Westdickenberg \cite{LW} for the case
$1<\gamma\le 5/3$ even for the spherically symmetric solutions. As a
corollary of Theorem 8.1 in this paper, the existence of
finite-energy entropy solutions is also established for the case
$\gamma>5/3$. Combining Propositions 6.2 and 7.2 with the estimates
in \cite{LW}, we also obtain the existence of finite-energy
solutions with spherical symmetry for the multidimensional Euler
equations for compressible, isentropic fluids for the case
$\gamma>5/3$.

\section{Uniform Estimates for the Solutions of the Navier-Stokes
Equations}

\medskip
Consider the Cauchy problem
\eqref{Eq:NS-1}--\eqref{Eq:Initial-Conditions} for the Navier-Stokes
equations in $\R_+^2:=[0,\infty)\times \R$.
Assume that $(\rho^\e(t,x), u^\e(t,x))$ are smooth solutions of
\eqref{Eq:NS-1}--\eqref{Eq:Initial-Conditions}, globally in time,
with $\rho^\e(t,x)\ge c_\e(t)$ for some $c_\e(t)>0$ for $t\ge 0$ and
$\lim_{x\to\pm\infty}(\rho^\e(t,x), u^\e(t,x))=(\rho^\pm,
u^\pm)$.

We now make several uniform estimates for the solutions
$(\rho^\e(t,x), u^\e(t,x))$ of
\eqref{Eq:NS-1}--\eqref{Eq:Initial-Conditions}, which are
independent of the physical viscosity coefficient $\e>0$. These
estimates are essential for establishing the convergence of
vanishing viscosity limit of solutions of the Cauchy problem
\eqref{Eq:NS-1}--\eqref{Eq:Initial-Conditions} for the Navier-Stokes
equations to a finite-energy entropy solution of the isentropic
Euler equations \eqref{E-1} with initial data
\eqref{Eq:Initial-Conditions}.

For simplification of notation, throughout this section, we denote
$\int=\int_{\R}$, $(\rho, u)=(\rho^\e, u^\e)$, and $C>0$ is a
universal constant independent of $\e$.

\smallskip
\subsection{Estimate I: Energy Estimate}

The total mechanical energy for \eqref{Eq:NS-1} in $\R$ introduced
in \eqref{def:energy} is equal to
$$
E[\rho, u](t)=\int\Big(\frac{1}{2}\rho(t,x)
|u(t,x)-\bar{u}(x)|^2+e^*(\rho(t,x),\bar{\rho}(x))\Big)\, dx,
$$
where
$e^*(\rho,\bar{\rho})=e(\rho)-e(\bar{\rho})-e'(\bar{\rho})(\rho-\bar{\rho})\ge
0
$ satisfies
$$
e^*(\bar{\rho},\bar{\rho})=e^*_\rho(\bar{\rho}, \bar{\rho})=0, \quad
e^*_{\rho\rho}(\rho,\rhob)=\frac{(\gamma-1)^2}{4}\rho^{\gamma-2}\ge
0  \qquad\mbox{for}\,\,\gamma>1.
$$
This implies that $e^*(\rho,\bar{\rho})$ is a convex function in
$\rho\ge 0$ that behaves like $\rho^\gamma$ for large $\rho$ and
like $(\rho-\bar{\rho})^2$ for $\rho$ close to $\bar{\rho}$. In
particular, for later use, we notice that there exists $C_0>0$ such that
\begin{equation}\label{e-1}
\rho(\rho^\theta-\bar{\rho}^\theta)^2\le C_0\, e^*(\rho, \bar{\rho})
\qquad \mbox{for}\,\, \rho\in [0,\infty),
\end{equation}
where $C_0$ is a continuous function of $\bar{\rho}$ and $\gamma$.

We start with the standard energy estimate.

\begin{lemma}[Energy Estimate]\label{lemma:3.1}
Let $E[\rho_0, u_0]\le E_0<\infty$, where $E_0>0$ is independent of
$\e$.
Then there exists $C=C(E_0,t,\bar{\rho},\bar{u})>0$, independent of
$\e$, such that
\begin{equation}
\label{B:1-Energy}
\sup_{\tau\in[0,t]} E[\rho,
u](\tau)
+{}\int_0^t\int\, \e|\u_x|^2\, dxd\tau {}\leq{}
C.
\end{equation}
\end{lemma}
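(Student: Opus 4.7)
The plan is to derive a pointwise identity for the relative-entropy density $\tfrac{1}{2}\rho(u-\bar u)^2 + e^*(\rho,\bar\rho)$ along solutions of \eqref{Eq:NS-1}, integrate in $x$, and close by Gronwall. Since $(\eta^*, q^*)$ from \eqref{mech-energy} is an entropy pair satisfying \eqref{7.1.1.1}, contracting \eqref{Eq:NS-1} with $\nabla\eta^*(\rho,m)$ gives
\begin{equation*}
\partial_t \eta^*(\rho, m) + \partial_x q^*(\rho, m) = \e\,\eta^*_m(\rho,m)\,u_{xx} = \e\, u\, u_{xx}.
\end{equation*}
Because $(\bar\rho,\bar u)$ is independent of $t$, subtracting the affine tangent $\eta^*(\bar\rho,\bar m) + \nabla\eta^*(\bar\rho,\bar m)\cdot(\rho-\bar\rho,\, m-\bar m)$ rewrites this as the relative-entropy identity
\begin{equation*}
\partial_t\!\Bigl(\tfrac{1}{2}\rho(u-\bar u)^2 + e^*(\rho,\bar\rho)\Bigr) + \partial_x G = S + \e(u-\bar u)u_{xx},
\end{equation*}
where $G$ is an explicit flux in $(\rho, u, \bar\rho, \bar u)$ and the source $S$ is a sum of terms proportional to $\bar\rho_x$ or $\bar u_x$ (coming from the derivatives hitting $(\bar\rho,\bar u)$), hence is supported in $[-L_0, L_0]$.

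Next I would integrate over $\R$. The flux $G$ contributes only the finite constant $G(-\infty)-G(+\infty)$ determined by $(\rho^\pm, u^\pm)$, using that the finite-energy assumption forces $(u-\bar u,\, \rho-\bar\rho)\to 0$ at $\pm\infty$ and absolute convergence of the relevant integrals. On the compact support of $(\bar\rho_x, \bar u_x)$, the pointwise bound
\begin{equation*}
\rho + \rho|u| + \rho(u-\bar u)^2 + p(\rho) \le C\Bigl(1 + \tfrac{1}{2}\rho(u-\bar u)^2 + e^*(\rho,\bar\rho)\Bigr),
\end{equation*}
which exploits that $e^*(\rho,\bar\rho)$ grows like $\rho^\gamma$ for large $\rho$ and like $(\rho-\bar\rho)^2$ near $\bar\rho$ (and handles the cross term $\rho\bar u(u-\bar u)$ by Cauchy--Schwarz), gives $\bigl|\int S\,dx\bigr| \le C\bigl(1 + E[\rho,u]\bigr)$. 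For the viscous term, integration by parts produces
\begin{equation*}
\e\int(u-\bar u)u_{xx}\,dx = -\e\int u_x^2\,dx + \e\int\bar u_x u_x\,dx \le -\tfrac{\e}{2}\int u_x^2\,dx + C,
\end{equation*}
where the last inequality uses Cauchy--Schwarz, the compact support of $\bar u_x$, and $\e\le\e_0$.

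Collecting the estimates yields the differential inequality
\begin{equation*}
\frac{d}{dt}E[\rho,u](t) + \tfrac{\e}{2}\int u_x^2\,dx \le C\bigl(1 + E[\rho,u](t)\bigr),
\end{equation*}
with $C=C(\bar\rho,\bar u,\e_0)$ independent of $\e\in(0,\e_0]$. Gronwall's inequality then gives $E[\rho,u](t)\le (E_0+C)e^{Ct}$, and integrating once more in time produces the dissipation bound on $\int_0^t\!\int\e\, u_x^2\,dx\,d\tau$. The main bookkeeping obstacle is the identification of the flux $G$ and source $S$ in the relative-entropy identity, together with the verification that $\int_\R \partial_x G\,dx$ reduces to a time-independent constant depending only on the end states $(\rho^\pm, u^\pm)$; everything else is a routine convexity and Cauchy--Schwarz argument.
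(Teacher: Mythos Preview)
Your argument is correct and follows essentially the same strategy as the paper: both derive the differential inequality $\tfrac{d}{dt}E + \tfrac{\e}{2}\int u_x^2\,dx \le C(1+E)$ from the entropy structure of $(\eta^*,q^*)$, the compact support of $(\bar\rho_x,\bar u_x)$, and Cauchy--Schwarz on the viscous cross term, then conclude by Gronwall. The only cosmetic difference is that you organize the computation as a pointwise relative-entropy identity before integrating, whereas the paper splits $\tfrac{dE}{dt}$ directly into the three integrals $\tfrac{d}{dt}\int\eta^*$, $\tfrac{d}{dt}\int\eta^*(\bar\rho,\bar m)$, and $\int\nabla\eta^*(\bar\rho,\bar m)\cdot(\rho_t,m_t)$; the boundary constant you call $G(-\infty)-G(+\infty)$ corresponds exactly to the paper's $q^*(\rho^-,m^-)-q^*(\rho^+,m^+)$ together with the boundary contributions from integrating $\nabla\eta^*(\bar\rho,\bar m)\cdot(m,\rho u^2+p)_x$ by parts.
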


This can be seen through the following direct calculation:
\begin{equation}
\label{lemma3.1:eq.1}
\frac{dE}{dt}{}={}\frac{d}{dt}\int\,\eta^*(\rho,m)\,dx
{}-{}\frac{d}{dt}\int\,\eta^*(\bar{\rho},\bar{m})\,dx{}
-{}\int\,\nabla\eta^*(\bar{\rho},\bar{m})\cdot(\rho_t,m_t)\,dx.
\end{equation}
Since $(\eta^*,q^*)$ is an entropy pair, we have
\[
\eta^{*}(\rho,m)_t{}+{}q^{*}(\rho,m)_x{}-{}\e \eta^*_m(\rho, m)
\,u_{xx}{}={}0,
\]
from which we conclude that
\begin{equation}
\label{lemma3.1:eq.2}
\frac{d}{dt}\int\,\eta^*(\rho,m)\,dx{}+{}\e\int\,|u_x|^2\,dx{}
={}q^*(\rho^-,m^-){}-{}q^*(\rho^+,m^+).
\end{equation}
The second integral in \eqref{lemma3.1:eq.1} depends only on $x$,
which implies that the second term on the right-hand side of
\eqref{lemma3.1:eq.1} vanishes. For the last integral, we employ
\eqref{Eq:NS-1} to obtain
\begin{eqnarray*}
\Big|
\int\,\nabla\eta^*(\bar{\rho},\bar{m})\cdot(\rho_t,m_t)\,dx\Big| &=&
\Big|-\int\,\nabla\eta^*(\bar{\rho},\bar{m})\cdot(m_x,(\rho u^2+p)_x-\e u_{xx})\,dx\Big| \\
&=&\Big|\int\,(\nabla\eta^*(\bar{\rho},\bar{m}))_x\cdot(m, \rho u^2+p -\e u_x)\,dx\Big| \\
&\leq&
\frac{\e}{2}\int\,|u_x|^2\,dx{}+\frac{1}{2}\int\,\rho|u-\bar{u}|^2\,dx\\
&& +C\Big(1+\int_{-L_0}^{L_0}(\rho+p)\,dx\Big).
\end{eqnarray*}
where we used that the compact support of $(\bar{\rho}_x,
\bar{u}_x)$ lies in the interval $[-L_0, L_0]$ for some $L_0>0$.
Since
$$
\int_{-L_0}^{L_0}(\rho+p)\,dx \le C\Big(1+\int_{-L_0}^{L_0}e^*(\rho,
\bar{\rho})\,dx\Big),
$$
we obtain
\begin{equation*}
\Big|
\int\,\nabla\eta^*(\bar{\rho},\bar{m})\cdot(\rho_t,m_t)\,dx\Big|{}\leq{}
\frac{\e}{2}\int\,|u_x|^2\,dx{} +C (E+1),
\end{equation*}
for some $C$ depending only on $(\gamma,\bar{\rho},\bar{u}).$
Combining this with \eqref{lemma3.1:eq.2}, we have
\[
\frac{dE}{dt}{}+{}\frac{\e}{2}\int\,|u_x|^2\,dx{}\leq{} CE{}+{}C.
\]
Then the lemma follows by Gronwall's inequality.

\medskip
\subsection{Estimate II: Space-Derivative Estimate for the Density}
We now develop an essential estimate for $\rho_x(t,x)$ involving the
$x$-derivative of the density, motivated by an argument in
\cite{Kanel}.

\begin{lemma}\label{lemma:3.2}
Let $(\rho_0,u_0)$ be such that
\[
\e^2 \int \frac{|\rho_{0,x}(x)|^2}{\rho_0(x)^3} dx \le E_1<\infty,
\]
where $E_1$ is independent of $\e$. Then there exists
$C=C(E_0,E_1,\bar{\rho},\bar{u},t)>0$ independent of $\e$ such that,
for any $t>0$,
\begin{equation}
\label{B:2-Energy} \e^2\int\frac{|\rho_x(t,x)|^2}{\rho(t,x)^3}dx{}
+{}\e\int_0^t\int\rho^{\gamma-3}|\rho_x|^2\, dxd\tau {}\leq{}
C.
\end{equation}
\end{lemma}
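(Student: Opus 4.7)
The plan is to implement a Kanel-type argument (in the spirit of \cite{Kanel}) adapted to the finite-energy framework. The cornerstone is the auxiliary variable
\[
v := u + \e\frac{\rho_x}{\rho^2} = u - \e\,(1/\rho)_x,
\]
which satisfies the clean identity $\rho\, D_t v = -p_x$, where $D_t := \partial_t + u\partial_x$ is the material derivative. The verification is direct: the continuity equation yields $D_t(\rho_x/\rho^2) = -u_{xx}/\rho$, so that $D_t(\e\rho_x/\rho^2) = -\e u_{xx}/\rho$, and adding this to the non-conservative momentum equation $\rho D_t u = -p_x + \e u_{xx}$ produces a cancellation of the viscous contributions.

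The next step is to multiply this identity by the shifted quantity $v-\bar u$ (the subtraction of $\bar u$ ensures $v - \bar u \to 0$ at $x = \pm\infty$), use continuity to rewrite $\rho(v-\bar u) D_t (v - \bar u)$ in conservation form, and integrate over $\R$. Because $p = \kappa\rho^\gamma$, one has $(\rho_x/\rho^2)\,p_x = \kappa\gamma\rho^{\gamma-3}\rho_x^2$, so the desired dissipation appears on the left with the correct sign, yielding
\[
\frac{d}{dt}\int \frac{\rho(v-\bar u)^2}{2}\,dx + \e\kappa\gamma\int \rho^{\gamma-3}\rho_x^2\,dx = R(t),
\]
where $R(t)$ collects $-\int(u-\bar u) p_x\,dx$, the background-convection term $-\int\rho(v-\bar u) u\bar u_x\,dx$ (involving the compactly supported $\bar u_x$), and the flux at infinity (which vanishes). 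The terms in $R(t)$ are controlled via Lemma \ref{lemma:3.1}, Cauchy--Schwarz and Young's inequality: the pressure integral is rewritten via integration by parts and split with carefully matched powers of $\rho$, so that a small fraction $\delta\int\rho^{\gamma-3}\rho_x^2\,dx$ is absorbed on the left while the remainder is dominated by $C_\delta(1+E[\rho,u](\tau))$ using \eqref{e-1} and the relative internal energy $e^*(\rho,\bar\rho)$.

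Finally, to extract the first quantity in \eqref{B:2-Energy}, expand
\[
\int \rho(v-\bar u)^2\,dx = \int \rho(u-\bar u)^2\,dx + 2\e\int (u-\bar u)\frac{\rho_x}{\rho}\,dx + \e^2\int\frac{\rho_x^2}{\rho^3}\,dx,
\]
and apply Young's inequality $|2\e(u-\bar u)\rho_x/\rho| \le 2\rho(u-\bar u)^2 + \tfrac12\e^2\rho_x^2/\rho^3$ to deduce
\[
\e^2\int \frac{\rho_x^2}{\rho^3}\,dx \le 2\int \rho(v-\bar u)^2\,dx + C\,E[\rho,u](t).
\]
Combining this with the above dissipation identity, invoking Lemma \ref{lemma:3.1} to handle $E[\rho,u](t)$, and applying Gronwall's inequality closes \eqref{B:2-Energy}. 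The initial data hypothesis $\e^2\int|\rho_{0,x}|^2/\rho_0^3\,dx \le E_1$ is exactly what bounds $\int \rho_0(v_0 - \bar u)^2\,dx$ at $t=0$.

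The main obstacle I foresee is the control of the pressure contribution to $R(t)$: since the end states are non-zero and $\rho$ is not a priori bounded from above, the powers of $\rho$ in the Young decomposition of $\int(u-\bar u)\kappa\gamma\rho^{\gamma-1}\rho_x\,dx$ must be matched very precisely, so that the surplus can be absorbed into either $\e\kappa\gamma\int\rho^{\gamma-3}\rho_x^2\,dx$ on the left or the energy $E[\rho,u]$ of Lemma \ref{lemma:3.1}. The shifted variable $v-\bar u$ together with the relative internal energy $e^*(\rho,\bar\rho)$ is exactly what enables these cancellations; without them, the integration-by-parts step would generate uncontrollable boundary contributions from the non-vanishing data at $\pm\infty$.
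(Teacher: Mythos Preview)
Your effective-velocity approach is sound in principle and the identity $\rho D_t v=-p_x$ is correct; it is essentially a repackaging of the paper's computation, since $\rho(v-\bar u)^2$ differs from the paper's quantity $\e^2\rho_x^2/\rho^3$ only by the kinetic energy $\rho(u-\bar u)^2$ and the cross term $2\e(u-\bar u)\rho_x/\rho$. However, your stated treatment of the pressure contribution to $R(t)$ does not close. The dissipation on the left is $\e\kappa\gamma\int\rho^{\gamma-3}\rho_x^2\,dx$, with an explicit factor $\e$; if you try to absorb a piece $\delta\int\rho^{\gamma-3}\rho_x^2\,dx$ coming from a Young split of $-\int(u-\bar u)p_x\,dx=-\kappa\gamma\int(u-\bar u)\rho^{\gamma-1}\rho_x\,dx$, you are forced to take $\delta=O(\e)$, and then the complementary term is $O(\e^{-1})\int\rho^{\gamma+1}(u-\bar u)^2\,dx$, which is neither uniformly bounded in $\e$ nor dominated by $E[\rho,u]$ (the energy only controls $\rho(u-\bar u)^2$ and $\rho^\gamma$, not $\rho^{\gamma+1}(u-\bar u)^2$). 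No rearrangement of powers fixes both defects simultaneously.

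The remedy is not absorption but cancellation: observe that $\int u\,p_x\,dx=\frac{d}{dt}\int e(\rho)\,dx$ up to boundary fluxes, so if you augment your Lyapunov functional to $\int\big(\tfrac12\rho(v-\bar u)^2+e^*(\rho,\bar\rho)\big)\,dx$, the term $-\int(u-\bar u)p_x\,dx$ is absorbed into $\frac{d}{dt}\int e^*$ and only compactly supported remainders involving $\bar\rho_x,\bar u_x$ survive, all controlled by $C(1+E)$. Gronwall then closes exactly as you outline. Equivalently, one may subtract the kinetic-energy identity from your identity; the pressure terms cancel and one recovers the paper's equation \eqref{lemma3.2:eq.0}, whose right-hand side contains $\e\int_0^t\!\int|u_x|^2$ (bounded by Lemma~\ref{lemma:3.1}) together with the cross term $-2\e\int\rho_x(u-\bar u)/\rho\,dx$ and a term $2\e\int\bar u(\ln\rho)_x\,dx$. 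The paper then needs a separate argument on the set $\{\rho\le\check\rho/2\}$ to control the logarithm; with your augmented functional that step is actually avoided, which is a mild advantage of the effective-velocity formulation once the pressure term is handled correctly.
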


\begin{proof} Set $v=\frac{1}{\rho}$.
Then the first equation in \eqref{Eq:NS-1} can be written as
\[
v_t +u v_x=vu_x.
\]
Differentiating the above equation in $x$, we have
\begin{equation}
\label{3.1}
v_{xt} {}+{}(\u v_x)_x{}={}(v\u_x)_x.
\end{equation}
Then we multiply \eqref{3.1} by $2v_x$ to obtain
\[
(|v_x|^2)_t{}+{}\u (|v_x|^2)_x {}+{}2\u_x|v_x|^2{}={}2v_x(v\u_x)_x.
\]
Multiplying this by $\rho$ and using the equation of conservation of
mass yield
\[
(\rho|v_x|^2)_t{}+{}(\rho\u|v_x|^2)_x{}+{}2\rho\u_x|v_x|^2{}
={}2\rho v_x(v\u_x)_x,
\]
or
\begin{equation}
\label{3.1.1} (\rho|v_x|^2)_t{}+{}(\rho\u|v_x|^2)_x{}={}2v_x\u_{xx}.
\end{equation}

Using the second equation in \eqref{Eq:NS-1} and \eqref{3.1}, we
obtain
\begin{equation}
\label{3.2}
\begin{array}{ll}
2v_x\u_{xx}&=\frac{2}{\e}v_x\left( p_x {}+{}(\rho\u)_t{}+{}(\rho\u^2)_x\right)\medskip\\
&=\frac{2}{\e}v_xp_x{}+{}\frac{2}{\e}\Big( (\rho(\u-\bar{u})
v_x)_t{}-{}(\bar{u}(\ln\rho)_x)_t{}\underbrace{-{}\rho\u(v\u_x)_x{}+{}\rho\u(\u
v_x)_x{}+{}v_x(\rho\u^2)_x}_{J}\Big).
\end{array}
\end{equation}
By integration by parts, we have
\begin{eqnarray}
\int J \, dx &=&\int \big(v\u_x(\rho\u)_x{}-{}\u v_x(\rho\u)_x
{}+{}v_x(\u(\rho\u)_x{}+{}\rho\u\u_x)\big)\,dx
\nonumber\\
&=&\int\big(v\u_x(\rho\u)_x{}+{}\rho\u
v_x\u_x\big)\,dx=\int|\u_x|^2\, dx. \label{3.3}
\end{eqnarray}
Furthermore,
\begin{equation}
\label{3.4}
v_xp_x{}={}-\frac{(\gamma-1)^2}{4}\rho^{\gamma-3}|\rho_x|^2.
\end{equation}
Integrating \eqref{3.1.1} over $[0, t)\times \R$ and using the
calculations in \eqref{3.2}--\eqref{3.4}, we conclude
\begin{eqnarray}
&&\e^2\int\frac{|\rho_x(t,x)|^2}{\rho(t,x)^3}dx{}
+{}\frac{(\gamma-1)^2}{2}\e\int_0^t\int\rho^{\gamma-3}|\rho_x|^2dxd\tau
\nonumber\\
&&={}-2\e\int \frac{\rho_x(t,x)(u(t,x)-\bar{u}(x))}{\rho(t,x)}
dx{}+{}2\e\int\,\bar{u}(x)(\ln\rho)_x(t,x)\,dx
-{}2\e\int_0^t\int|\u_x|^2\,dxd\tau \nonumber\\
&&\quad +2\e\int \frac{\rho_{0,x}(x)(u_0(x)-\bar{u}(x))}{\rho_0(x)}
dx{}+{}2\e\int\,\bar{u}(x)(\ln\rho_0)_x(x)\,dx.
\label{lemma3.2:eq.0}
\end{eqnarray}
The first integral on the right-hand side is estimated by
\begin{eqnarray}
&&\frac{\e^2}{4}\int\,\frac{|\rho_x(t,x)|^2}{\rho(t,x)^3}dx{}
+{}8\int\,\rho(t,x)|u(t,x)-\bar{u}(x)|^2\,dx{}\nonumber\\
&&\leq{}
\frac{\e^2}{4}\int\,\frac{|\rho_x(t,x)|^2}{\rho(t,x)^3}dx{}+{}16E[\rho,u](t).
\label{lemma3.2:eq.3}
\end{eqnarray}
Similarly, the forth integral on the right-hand side is controlled
by
\begin{equation}
\frac{\e^2}{4}\int\,\frac{|\rho_{0,x}(x)|^2}{\rho_0(x)^3}dx{}+{}16E_0.
\label{lemma3.2:eq.3a}
\end{equation}
To estimate the second integral, we write
\[
2\e\int\,\bar{u}(\ln\rho)_x\,dx{}={}-2\e\int_{A_1}\bar{u}_x\ln\rho\,dx{}-{}
2\e\int_{A_2}\bar{u}_x\ln\rho\,dx +2\e\big(u^+\ln\rho^+-u^-\ln
\rho^-\big),
\]
where
\[
A_1{}={}\Big\{ x\,:\,
\rho(t,x)\leq\frac{\check{\rho}}{2}\Big\},\quad A_2{}={}A_1^c \qquad
\mbox{for}\,\,\,  \check{\rho}{}={}\min\{\rho^-,\rho^+\}.
\]
Since, on $A_2$, $|\ln\rho(t,x)|{}\leq{}C\rho(t,x)$ and $\bar{u}_x$
is compactly supported, we can obtain
\begin{equation}
\label{lemma3.2:eq.5} \left|
2\e\int_{A_2}\bar{u}_x\ln\rho\,dx\right| {}\leq{}C\Big(1
+\int\,e^*(\rho(t,x),\bar{\rho}(x))\,dx\Big).
\end{equation}

If the set $A_1$ is not
empty, then
\[
\left|2\e\int_{A_1}\bar{u}_x\ln\rho\,dx\right|{}\leq{}C\e\sup_{x\in
A_1}|\ln\rho(t,x)| {}\leq{}C\e\sup_{x\in
A_1}\frac{1}{\sqrt{\rho(t,x)}},
\]
and $A_1$ has finite measure, which can be estimated from
\eqref{B:1-Energy} by
\[
|A_1|{}\leq{}\frac{C}
{e^*(\frac{\check{\rho}}{2},\check{\rho})}{}=:{}d(t).
\]
In particular, for any $(t, x)$, there is a point $x_0(t,x)$ such
that $|x-x_0|\le d(t)$ and $\rho(t, x_0)=\frac{\check{\rho}}{2}$.
Then we have
\begin{eqnarray*}
\e\sup_{x\in A_1}\frac{1}{\sqrt{\rho(t,x)}} &\leq &\e\sup_{x\in
A_1}\Big|\frac{1}{\sqrt{\rho(t,x)}}{}
    -{}\frac{1}{\sqrt{\rho(t,x_0)}}\Big|{}+{}\frac{\e}{\sqrt{\check{\rho}/2}}
      \nonumber\\
&\leq&
\e\int_{x_0-d(t)}^{x_0+d(t)}\Big|\big(\frac{1}{\sqrt{\rho(t,x)}}\big)_x
\Big|\,dx{}+{}\frac{\e}{\sqrt{\check{\rho}/2}}\nonumber\\
&\leq&\Big(\frac{\e^2}{2}\int\,\frac{|\rho_x|^2}{\rho^3}\,dx\Big)^{1/2}\sqrt{d(t)}
 +{}\frac{\e}{\sqrt{\check{\rho}/2}}\nonumber\\
&\leq& \frac{\e^2}{4}\int\,\frac{|\rho_x|^2}{\rho^3}\,dx{}+{}C(t).
\end{eqnarray*}
Thus, we obtain
\[
2\e \Big| \int\,\bar{u}(\ln\rho)_x\,dx \Big|{}\leq{}
\frac{\e^2}{4}\int\,
\frac{|\rho_x|^2}{\rho^3}\,dx{}+{}C.
\]
Combining this with \eqref{lemma3.2:eq.3} in \eqref{lemma3.2:eq.0},
we obtain
\begin{eqnarray*}
&&\e^2\int\frac{|\rho_x(t,x)|^2}{\rho(t,x)^3}dx{}
+{}\frac{(\gamma-1)^2}{2}\e\int_0^t\int\rho^{\gamma-3}|\rho_x|^2dxd\tau \\
&&\leq \frac{\e^2}{2}\int\, \frac{|\rho_x(t,x)|^2}{\rho(t,x)^3}\,dx
{}+{}\e^2\int\frac{|\rho_{0,x}(x)|^2}{\rho_0(x)^3}
dx +{} C. \notag
\end{eqnarray*}
The estimate of the lemma then follows.
\end{proof}

\subsection{Estimate III: Higher Integrability}

We now make uniform estimates for higher integrability of the
solutions.

\begin{lemma}[Higher Integrability--I]\label{lemma:3.3}
Let $E[\rho_0, u_0]\le E_0<\infty$ for $E_0$ independent of $\e$.
Then,
for any $-\infty<a<b<\infty$ and all $t>0$, there exists $C=C(a, b,
E_0, \gamma, \bar{\rho},\bar{u}, t)>0$, independent of $\e>0$, such
that
$$
\int_0^t\int_a^b \rho(t,x)^{\gamma+1}\, dxd\tau\le C.
$$
\end{lemma}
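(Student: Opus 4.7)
The plan is to apply a localized momentum-equation multiplier that converts the pressure $p=\kappa\rho^\gamma$ appearing in the flux $\rho u^2+p$ into the desired integrand $\rho\cdot p=\kappa\rho^{\gamma+1}$. Fix a cutoff $\phi\in C_c^\infty(\R)$ with $0\le\phi\le 1$, $\phi\equiv 1$ on $[a,b]$, and $\supp\phi\subset[a-1,b+1]$, together with a smooth step function $\Xi\in C^\infty(\R)$ satisfying $\Xi\equiv 0$ on $(-\infty,b+1]$ and $\Xi\equiv 1$ on $[b+2,\infty)$. Define
\[
M(\tau):=\int\phi(y)\rho(\tau,y)\,dy,\qquad
w(\tau,x):=\int_{-\infty}^x\phi(y)\rho(\tau,y)\,dy-M(\tau)\Xi(x).
\]
Then $w$ is compactly supported in $[a-1,b+2]$ with $w_x=\phi\rho-M\Xi'$, and Lemma~\ref{lemma:3.1} together with \eqref{e-1} yields $M(\tau)\le C$ uniformly in $\tau\in[0,t]$ and in $\e$.

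Multiplying the momentum equation in \eqref{Eq:NS-1} by $w$ and integrating over $[0,t]\times\R$, all boundary terms at $x=\pm\infty$ vanish by the compact support of $w$. Integration by parts in $x$ converts the flux term into $-\int w_x(\rho u^2+p)=-\int\phi\rho^2 u^2-\kappa\int\phi\rho^{\gamma+1}+M\int\Xi'(\rho u^2+p)$. A direct computation from the continuity equation gives $w_\tau=-\phi m+R-M'\Xi$, where $R(\tau,x):=\int_{-\infty}^x\phi'(y)m(\tau,y)\,dy$; the structural observation that $R(\tau,x)\to M'(\tau)$ as $x\to+\infty$ makes $R-M'\Xi$ compactly supported in $x$. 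Integrating the time derivative $\int w\,m_\tau$ by parts in $\tau$ produces $+\int\phi m^2=+\int\phi\rho^2 u^2$, which \emph{cancels exactly} the negative $\rho^2u^2$ contribution from the flux term; this cancellation is the algebraic heart of the proof. After cancellation the identity reduces to
\[
\kappa\!\int_0^t\!\!\int\phi\rho^{\gamma+1}\,dx\,d\tau=\Big[\!\int w\,m\,dx\Big]_0^t-\int_0^t\!\!\int(R-M'\Xi)m\,dx\,d\tau+\int_0^t\!\!M\!\int\Xi'(\rho u^2+p)\,dx\,d\tau+\e\!\int_0^t\!\!\int\phi\rho u_x\,dx\,d\tau-\e\!\int_0^t\!\!M\!\int\Xi' u_x\,dx\,d\tau.
\]

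It remains to bound each right-hand term using Lemma~\ref{lemma:3.1}. The first three terms all involve integrands whose essential part is compactly supported in $x$, so Cauchy--Schwarz combined with $M\le C$, $\int\rho u^2\le C$, and $\int_{\rm loc}\rho\le C$ controls them by a constant $C=C(a,b,E_0,\bar\rho,\bar u,t)$. The boundary viscous term $\e\int_0^t M\int\Xi' u_x$ is controlled by Cauchy--Schwarz using $\e\int_0^t\!\!\int u_x^2\le C$. The main obstacle is the remaining viscous term $\e\int_0^t\int\phi\rho u_x$, because $\rho$ is not a priori in $L^2_{\rm loc}$. I would estimate it by Cauchy--Schwarz in both $x$ and $\tau$,
\[
\Big|\e\!\int_0^t\!\!\int\phi\rho u_x\,dx\,d\tau\Big|\le\Big(\e\!\int_0^t\!\!\int u_x^2\Big)^{\!1/2}\Big(\e\!\int_0^t\!\!\int\phi^2\rho^2\,dx\,d\tau\Big)^{\!1/2},
\]
together with the elementary bound $\rho^2\le\rho^{\gamma+1}+1$ (valid for all $\rho\ge 0$ when $\gamma\ge 1$), which gives $\e\int_0^t\!\!\int\phi^2\rho^2\le\e\int_0^t\!\!\int\phi\rho^{\gamma+1}+C\e t$. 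Young's inequality then absorbs a small multiple of $\int_0^t\!\!\int\phi\rho^{\gamma+1}$ into the left-hand side; since $\phi\equiv 1$ on $[a,b]$, the claimed bound follows.
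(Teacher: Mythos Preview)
Your proof is correct and follows essentially the same strategy as the paper: both produce $\rho\cdot p=\kappa\rho^{\gamma+1}$ by testing the momentum equation against a density-weighted spatial primitive, exploit the exact cancellation of the $\phi\rho^2u^2$ term against the time-derivative contribution, and handle the dangerous viscous remainder $\e\int\phi\rho u_x$ via $\rho^2\le\rho^{\gamma+1}+1$ and absorption. The only difference is organizational: the paper first integrates the momentum equation over $(-\infty,x)$ with weight $\omega$ and then multiplies by $\rho\omega$, whereas you build the compactly supported multiplier $w=\int_{-\infty}^x\phi\rho-M\Xi$ directly; the $\Xi$--correction is a tidy device that replaces the paper's handling of the tail integrals $\int_{-\infty}^x(\cdots)\omega_x\,dy$.
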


\begin{proof}
Let $\omega(x)$ be an arbitrary smooth, compactly supported function
such that $0 \leq \omega(x){}\leq 1$. Multiplying the second
equation in \eqref{Eq:NS-1} by $\omega(x)$ and then integrating with
respect to the space variable over $(-\infty,x)$, we have
\[
\rho\u^2\omega{}+{}p\,\omega
{}={}\e\u_x\omega{}-{}\Big(\int_{-\infty}^x \rho\u\omega\,
dy\Big)_t{}+{}\int_{-\infty}^x\left( (\rho\u^2+
p)\omega_x{}-{}\e\u_x\omega_x\right)\,dy.
\]
Multiply this by $\rho\omega$ and use the first equation in
\eqref{Eq:NS-1} to obtain
\begin{eqnarray}
\rho p\,\omega^2
&=&-\rho^2\u^2\omega^2{}+{}\e\rho\u_x\,\omega^2{}-\Big(\rho\omega\int_{-\infty}^x
\rho\u\omega\, dy\Big)_t
   {}-{}(\rho\u)_x\,\omega\int_{-\infty}^x \rho\u\omega\, dy\nonumber\\
&&{}+{}\rho\omega\int_{-\infty}^x\left( (\rho\u^2+
p)\omega_x{}-{}\e\u_x\omega_x\right)\,dy
  \nonumber\\
&=& \e\rho\u_x\omega^2{}
-\Big(\rho\omega\int_{-\infty}^x\rho\u\omega\,dy\Big)_t
{}-{}\Big(\rho\u\omega\int_{-\infty}^x \rho\u\omega\, dy\Big)_x{}\nonumber\\
&&+{}\rho\u\omega_x\int_{-\infty}^x\rho\u\omega\,dy{}+{}
\rho\omega\int_{-\infty}^x\left( (\rho\u^2+
p)\omega_x{}-{}\e\u_x\omega_x\right)\,dy. \nonumber
\end{eqnarray}
Integrating the above equation over $(0,t)\times\R$, we have
\begin{eqnarray}
\int_0^t\int\rho
p\,\omega^2\,dyd\tau&=&\e\int_0^t\int\rho\u_x\omega^2\,dyd\tau{}
-{}\int
\rho\omega\Big(\int_{-\infty}^x\rho\u\omega\,dy\Big)\,dx\nonumber\\
&&+\int
\rho_0\omega\Big(\int_{-\infty}^x\rho_0\u_0\omega\,dy\Big)\,dx{}+{}r_1(t),
\label{Eq:H.I.-1}
\end{eqnarray}
where
\[
r_1(t){}={}\int_0^t\int\rho\u\omega_x\Big(\int_{-\infty}^x\rho\u\omega\,dy\Big)\,dxd\tau{}
+{}\int_0^t\int\rho\omega\Big(\int_{-\infty}^x\big((\rho\u^2+p)\omega_x
-\e\u_x\omega_x\big)\,dy\Big) \,dxd\tau.
\]

Note that, by the H\"{o}lder inequality, for any $\delta>0$,
\begin{eqnarray}
\e\int_0^t\int\rho\u_x\omega^2\,dxd\tau &\leq&
\frac{\e^2}{\delta}\int_0^t\int|\u_x|^2\,dxd\tau{}+{}\delta\int_0^t\int\rho^2\omega^4
\,dxd\tau\nonumber\\
&\leq& \frac{\e_0}{\delta}\e\int_0^t\int|\u_x|^2\,dxd\tau
{}+{}C\delta\int_0^t\int(1+\rho^{\gamma+1})\omega^2\,dxd\tau\nonumber\\
&\leq& C
 {}+{}C\delta\int_0^t\int\rho^{\gamma+1}\omega^2\,dxd\tau,
\label{Eq:H.I-1.1}
\end{eqnarray}
since $\e\in (0, \e_0]$. By Lemma \ref{lemma:3.1} and the H\"{o}lder
inequality, we have
\begin{eqnarray}
\left| \int_{-\infty}^x\rho\u\omega\,dy\right|
&\leq&\int_{\mbox{supp}\, \omega}|\rho\u|\,dy
{}\leq{}\Big(\int_{\mbox{supp}\, \omega}\rho\,dy\Big)^{1/2}
 \Big(\int_{\mbox{supp}\,\omega}\rho\u^2\,dy\Big)^{1/2}\nonumber\\
&\leq & C\Big(\int_{\mbox{supp}\,
\omega}\big(1+e^*(\rho,\bar{\rho})\big)\,dy \Big)^{1/2}
\Big(\int_{\mbox{supp}\, \omega}\rho\u^2\,dy\Big)^{1/2}\leq
C.\qquad
\end{eqnarray}
It follows then that
\begin{equation}
\label{Eq:H.I.-2} \left|
\int\rho\omega\Big(\int_{-\infty}^x\rho\u\omega\,dy\Big)\,dx\right|{}\leq{}C.
\end{equation}
Similarly, we have
\begin{eqnarray}
&&\left|\int_0^t\int\rho\u\omega_x\Big(\int_{-\infty}^x\rho\u\omega\,dy\Big)\,dxd\tau\right|
{}+{} \left|
\int_0^t\int\rho\omega\Big(\int_{-\infty}^x\big(\rho\u^2+p\big)\omega_x\,dy\Big)\,dxd\tau\right|
\nonumber\\
&&+{}\left|\int_0^t\int\rho\omega\Big(\int_{-\infty}^x\e\u_x\omega_x\,dy\Big)\,dxd\tau\right|
{}\leq{} C.
\label{Eq:H.I.-3}
\end{eqnarray}
Combining estimates \eqref{Eq:H.I-1.1}, \eqref{Eq:H.I.-2}, and
\eqref{Eq:H.I.-3} for the terms on the right-hand side of
\eqref{Eq:H.I.-1}, we obtain
\[
\int_0^t\int\rho^{\gamma+1}\omega^2\,dxd\tau{}
\leq{}C\delta\int_0^t\int\rho^{\gamma+1}\omega^2 dxdt{} +{}C.
\]
Choosing suitably small $\delta>0$, we conclude
\[
\int_0^t\int\rho^{\gamma+1}\omega^2\,dxd\tau
\leq{}C.
\]
\end{proof}

\begin{lemma}[Higher Integrability-II]\label{lemma:3.4}
Let $(\rho_0(x), u_0(x))$ satisfy, in addition to the conditions in
Lemmas {\rm 3.1}--{\rm 3.2},
\begin{equation}
\label{INITIAL:RHO-1} \int_{-\infty}^\infty
\rho_0(x)|\u_0(x)-\bar{u}(x)|\,dx{}\le M_0<{}\infty,
\end{equation}
where $M_0>0$ is a constant independent of $\e$.
Then, for any compact set $K\subset\mathbb{R}$ and $t>0$, there
exists $C>0$ independent of $\e$ such that
\begin{equation}
\int_0^t\int_K \Big(\rho|\u|^3{}+{}\rho^{\gamma+\theta}\big)\,
dxd\tau{}\leq{} C.
\end{equation}
\end{lemma}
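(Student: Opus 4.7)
The goal is to bound $\int_0^t\int_K(\rho|u|^3+\rho^{\gamma+\theta})\,dxd\tau$ uniformly in $\e$, and I plan to proceed by extending the multiplier technique of Lemma \ref{lemma:3.3}. Let $\omega\in C_c^\infty(\R)$ be a cutoff with $\omega\equiv 1$ on $K$. My starting point is the identity obtained in the proof of Lemma \ref{lemma:3.3} by multiplying the second equation of \eqref{Eq:NS-1} by $\omega$ and integrating over $(-\infty,x)$:
\[
(\rho u^2+p)\omega=\e u_x\omega-\d_\tau I+R,\qquad I(\tau,x)=\int_{-\infty}^x(\rho u-\rho^-u^-)\omega\,dy,
\]
where $R(\tau,x)=\int_{-\infty}^x\!\!\big[(\rho u^2+p)\omega_y-\e u_y\omega_y\big]\,dy$ is supported in $\supp(\omega')$.

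The first preliminary is to use the hypothesis \eqref{INITIAL:RHO-1}: testing the momentum equation against a smooth regularization of $\text{sgn}(u-\bar u)$ and using Lemmas \ref{lemma:3.1}--\ref{lemma:3.2} to control the viscous correction, I obtain $\sup_{\tau\in[0,t]}\int\rho(\tau,x)|u(\tau,x)-\bar u(x)|\,dx\le C$. Combined with Cauchy--Schwarz and the energy estimate, this yields the pointwise bound $\|I(\tau,\cdot)\|_\infty\le C$, which is crucial for what follows. The core step is then to multiply the above identity by the two distinct multipliers $\rho^\theta\omega$ and (a smooth approximation of) $|u-\bar u|\omega$, and to integrate over $(0,t)\times\R$. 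The $\rho^\theta\omega$-multiplier extracts $\int_0^t\int\big(\rho^{1+\theta}u^2+\kappa\rho^{\gamma+\theta}\big)\omega^2$ on the left, while $|u-\bar u|\omega$ extracts $\int_0^t\int\big(\rho u^2|u-\bar u|+p|u-\bar u|\big)\omega^2$, from which one recovers $\int_0^t\int\rho|u|^3\omega^2$ modulo lower-order terms bounded via energy. The decisive feature is a cancellation: integrating the time-derivative residual $-(\text{multiplier})\,\omega\,\d_\tau I$ by parts in $\tau$ and then in $x$, applying the continuity equation (e.g. $(\rho^\theta)_\tau=-\theta\rho^{\theta-1}(\rho u)_x$) and the key identity $I_x=(\rho u-\rho^-u^-)\omega$, produces a compensating $\int\rho^{1+\theta}u^2\omega^2$ (respectively $\int\rho u^2|u-\bar u|\omega^2$) which partially or fully cancels the corresponding term on the left.

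The main obstacle is the control of residual cross terms such as $\int_0^t\int\rho^\theta u_x\,\omega I\,dxd\tau$ and $\int_0^t\int u\rho^{\theta-1}\rho_x\,\omega I\,dxd\tau$, which carry no explicit $\e$-smallness. I plan to split them by Young's inequality so that the derivative factor is paired with the $\e^{1/2}$ available from the dissipation bounds: $u_x$ is absorbed via $\e\int|u_x|^2\le C$ of Lemma \ref{lemma:3.1}, while $\rho^{(\gamma-3)/2}\rho_x$ is absorbed via $\e\int\rho^{\gamma-3}|\rho_x|^2\le C$ of Lemma \ref{lemma:3.2}; the complementary factors are controlled using $\|I\|_\infty\le C$, the energy bound, and the integrability $\int_0^t\int\rho^{\gamma+1}\omega^2\le C$ of Lemma \ref{lemma:3.3} (which in particular handles the $\rho^{\gamma+\theta}$ contribution directly in the range $1<\gamma\le 3$). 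The remainder $R$ is supported in $\supp(\omega')$ and is estimated exactly as in Lemma \ref{lemma:3.3}, and the $\e u_x\omega$-term on the right of the basic identity is bounded by $\e^{1/2}$. Absorbing the small pieces into the left-hand side then gives the claimed bound on any compact $K$.
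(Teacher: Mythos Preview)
Your plan has two genuine gaps that prevent it from closing, and both are exactly what the paper's entropy-pair argument is designed to avoid.

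\medskip
\textbf{(a) The preliminary bound $\sup_{\tau}\int\rho|u-\bar u|\,dx\le C$ is not justified.} Testing the momentum equation against a regularization of $\mathrm{sgn}(u-\bar u)$ does not produce an estimate on $\partial_t\int\rho|u-\bar u|\,dx$: after integration by parts the flux and pressure terms contribute $\int\delta(u-\bar u)(u_x-\bar u_x)(\rho u^2+p)\,dx$-type expressions with no sign, and the viscous term gives $\e\int\mathrm{sgn}(u-\bar u)u_{xx}$ which again has no sign after one integration by parts. The paper never claims this $L^\infty_tL^1_x$ bound; instead it controls only the \emph{integrated} quantity $\int_K\big|\int_{-\infty}^x\rho(u-u^-)\,dy\big|\,dx$ by integrating the momentum equation over $(-\infty,x)\times(0,t)$ and then over $x\in K$, which reduces everything to $\int_0^t\int_K$ of energy-controlled quantities. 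Without the pointwise bound $\|I\|_\infty\le C$ your subsequent estimates on the boundary term $\int\rho^\theta\omega I\big|_{0}^{t}$ and on the residuals do not follow.

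\medskip
\textbf{(b) The cross terms carry no $\e$ and cannot be absorbed.} After integrating $-\int_0^t\int\rho^\theta\omega\,\partial_\tau I$ by parts in $\tau$ and using the continuity equation, you obtain (up to the cancellation term and lower order pieces)
\[
\theta(\theta-1)\int_0^t\!\!\int u\,\rho^{\theta-1}\rho_x\,\omega I\,dxd\tau
\quad\text{and}\quad
\theta\int_0^t\!\!\int \rho^\theta u_x\,\omega I\,dxd\tau,
\]
neither of which carries an $\e$. Your proposal to ``pair with $\e^{1/2}$'' via Young necessarily introduces $\e^{-1/2}$ (or $\e^{-1}$) on the complementary factor: for instance $\int\rho^\theta u_x\omega I\le \tfrac12\e\int|u_x|^2+\tfrac12\e^{-1}\int\rho^{2\theta}\omega^2I^2$, and the second piece blows up as $\e\to0$. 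The same happens for $u\rho^{\theta-1}\rho_x=u\,\rho^{(\gamma-3)/2}\rho_x$. The $|u-\bar u|$ multiplier has an analogous problem: $\partial_\tau|u-\bar u|=\mathrm{sgn}(u-\bar u)\,u_\tau$ brings in, via the nonconservative form of the momentum equation, a term $\e\rho^{-1}u_{xx}$ that you cannot control.

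\medskip
The paper's route is to test the system with the special weak entropy pair $(\check\eta,\check q)$ generated by $\psi(s)=\tfrac12(s-u^-)|s-u^-|$, for which $\check q\ge C^{-1}(\rho|u-u^-|^3+\rho^{\gamma+\theta})$ and the entropy balance reads
\[
\check\eta_t+\check q_x=\e(\check\eta_m u_x)_x-\e\,\check\eta_{mu}|u_x|^2-\e\,\check\eta_{m\rho}\rho_x u_x.
\]
Every viscous term on the right already carries an explicit $\e$, and the bounds $|\check\eta_{mu}|\le C$, $|\check\eta_{m\rho}|\le C\rho^{\theta-1}=C\rho^{(\gamma-3)/2}$ match the dissipation estimates of Lemmas~3.1--3.2 exactly, so no $\e^{-1}$ ever appears. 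Integrating over $(0,t)\times(-\infty,x)$ and then over $x\in K$, the only nontrivial term is $\int_K\int_{-\infty}^x\check\eta\,dy\,dx$, which is handled by Taylor-expanding $\check\eta$ around $m=\rho u^-$ and invoking the hypothesis \eqref{INITIAL:RHO-1} together with the integrated momentum equation. This entropy structure is the missing idea in your plan.
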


\begin{proof}
Choose $\psi_\sh(w){}={}\frac{1}{2}w|w|$ in \eqref{eta}--\eqref{q}.
Then the corresponding weak entropy pair $(\eta^\sh,
q^\sh)=(\eta^{\psi_\sh}, q^{\psi_\sh})$ satisfies estimates
\eqref{2.0}--\eqref{2.1b}.

Note also that
\[
\eta^\sh(\rho,0){}={}\eta_\rho^\sh(\rho,0){}={}0,\qquad
q^\sh(\rho,0){}={}\frac{\theta}{2}\rho^{3\theta+1}\int|s|^3[1-s^2]_+^\lambda
ds>0,
\]
and
$$
\eta_m^\sh(\rho,0){}={}\alpha\rho^\theta{}\qquad\mbox{with}\,\,
\alpha:={}\int|s|[1-s^2]_+^\lambda ds.
$$
We also need the Taylor expansion of $\eta^\sh(\rho,m)$ at $m=0$ for
fixed $\rho$:
\begin{equation}
\label{Eq:Taylor} \eta^\sh(\rho,m){}={}\alpha \rho^\theta
m{}+{}r_2(\rho,m)
\end{equation}
with
\begin{equation}\label{3.19a}
|r_2(\rho,m)|{}\leq{}C \frac{m^2}{\rho}{}={}C\rho|\u|^2
\end{equation}
for some positive $C>0$. Finally, we introduce an entropy pair
$(\check{\eta},\check{q})$ by choosing the density function
$\psi(s){}={}\psi_\sh(s-u^-),$ where $u^-$ is the left end limit of
$u(t,x)$. Then
\[
\check{\eta}(\rho,m){}={}\eta^\sh(\rho,m-\rho u^-),\qquad
\check{q}(\rho,m){}={}q^\sh(\rho,m-\rho
u^-){}-{}u^-\eta^\sh(\rho,m-\rho u^-).
\]
Moreover, from \eqref{Eq:Taylor} and \eqref{3.19a}, we conclude
\begin{equation}
\label{Eq:Taylor1} \check{\eta}(\rho,m){}={}\alpha \rho^{\theta+1}
(u-u^-){}+{}r_2(\rho,\rho(u-u^-))
\end{equation}
with
\begin{equation}\label{3.19a1}
|r_2(\rho,\rho(u-u^-))|{}\leq{}C\rho|\u-u^-|^2.
\end{equation}

\medskip
Multiplying the first equation in \eqref{Eq:NS-1} by
$\check{\eta}_\rho$ and the second equation by $\check{\eta}_m$,
adding them together, and integrating the result over $(0, t)\times
(-\infty,x)$, we obtain
\begin{eqnarray}
 &&\int_{-\infty}^x\big(\check{\eta}(\rho, m){}
-{}\check{\eta}(\rho_0, m_0)\big)\,dy{}+{} \int_0^t q^\sh(\rho,
\rho(u-u^-))-u^-\eta^\sh(\rho,\rho(u-u^-))\,d\tau - t\tilde{q}
\nonumber\\
&& -{}\e\int_0^t\check{\eta}_m\u_x\,d\tau
 +\, \e\int_0^t\int_{-\infty}^x\check{\eta}_{mu}|u_x|^2\,dyd\tau {}
 +{}\e\int_0^t\int_{-\infty}^x\check{\eta}_{m\rho}\rho_x\u_x\,dyd\tau{}={}0,
 \label{2.3}
\end{eqnarray}
where $\tilde{q}=q^\sh(\rho^-,0)$. From the pointwise estimate
\eqref{2.1b} on $(\eta_{m\rho}^\sh,\,\eta_{m u}^\sh)$, which also
holds for $(\check{\eta}_{m\rho},\,\check{\eta}_{m u}),$ and Lemmas
\ref{lemma:3.1}--\ref{lemma:3.2}, we have
\begin{equation}
\label{2.4}
\Big|\e\int_0^t\int_{-\infty}^x\check{\eta}_{mu}|u_x|^2\,dyd\tau
\Big|{}\leq{} C,
\end{equation}
\begin{equation}
\label{2.5} \Big|
\e\int_0^t\int_{-\infty}^x\check{\eta}_{m\rho}\rho_x\u_x\,dyd\tau\Big|{}\leq{}
C.
\end{equation}
Using estimates \eqref{2.0}
and \eqref{2.4}--\eqref{2.5} in \eqref{2.3}, we obtain
\begin{eqnarray}
&&\int_0^t\int_K\left(\rho|\u-u^-|^3{}+{}\rho^{\gamma+\theta}\right)\,dxdt\nonumber\\
&&\leq  C(E_0,E_1,|K|,\bar{q},t){} +{}2\sup_{\tau\in[0,t]}
\Big|\int_K\Big(\int_{-\infty}^x\check{\eta}(\rho(y,\tau),
(\rho\u)(y,\tau))\,dy\Big)\,dx \Big|
 \nonumber\\
&&\quad +\sup|\bar{u}|\int_0^t\int_K|\eta^\sh(\rho,
\rho(u-u^-))|\,d\tau dx
  {}+{}\e C\int_0^t\int_K|\u||\u_x|\,dxd\tau\nonumber\\
&&\quad +{}\e C\int_0^t\int_K\rho^{\theta}|\u_x|\,dxd\tau.
\label{2.6}
\end{eqnarray}
Clearly, by the H\"{o}lder inequality,
\begin{eqnarray}
 \e\int_0^t\int_K\rho^\theta|\u_x|\, dxd\tau
\leq{}\e\int_0^t\int|\u_x|^2\,dxd\tau{}+{}
\e\int_0^t\int_K\rho^{\gamma-1}\,dxd\tau
\leq  C.
 \label{2.7}
\end{eqnarray}
Similarly,
\begin{eqnarray}
\e\int_0^t\int_K|\u||\u_x|\,dxd\tau &
\leq&\e\int_0^t\int_K|\u_x|^2\,dxd\tau{}+{}\e\int_0^t\int_K|\u|^2\,dxd\tau{}\nonumber\\
&\leq& C
{}+{}\e\int_0^t\int_K|\u|^2\,dxd\tau. \label{2.8}
\end{eqnarray}

Note from Lemma \ref{lemma:3.1} that there exists a nondecreasing
function $C(t)>0$ such that, for any $t>0$,
\[
\int_{\{\rho(t,x)\le \frac{\rhob}{2}\}}
e^*(\rho(t,\cdot),\rhob)\,dx{}\leq{}C(t),
\]
which implies that
\[
|\{x\,:\, \rho(t,x)\le \frac{\check{\rho}}{2}\}|{}
\leq{}\frac{C(t)}{e^*(\frac{\check{\rho}}{2},\check{\rho})}, \qquad
\check{\rho}{}={}\min\{\rho^-,\rho^+\}.
\]

Without loss of generality, we assume that $K$ contains the interval
$[a,b]$ of length
$\frac{2C(t)}{e^*(\frac{\check{\rho}}{2},\check{\rho})}$.
It follows then that, for any $t\ge 0$, there is a (measurable)
subset $A=A(t)\subset (a,b)$ of measure not less than
$\frac{C(t)}{e^*(\frac{\check{\rho}}{2},\check{\rho})}$ on which
$\rho(t,x)\geq\frac{\check{\rho}}{2}$.

Denote
\[
\u_A(t){}:={}\frac{1}{|A|}\int_A\u(t,x)\,dx.
\]
Then
\[
|\u(t,x)|{}\leq{} |\u_A(t)|{}+
{}
\int_K|\u_x|\,dx\qquad\mbox{for}\,\, x\in [a,b].
\]
We estimate
\begin{eqnarray*}
|u_A(t)|&\le &\frac{1}{|A|}\int_A |\u(t,x)|\,dx\\
&\leq& \frac{1}{|A|}\sqrt{\frac{2}{\check{\rho}}}\int_A\sqrt{\rho(t,x)}|\u(t,x)|\,dx\\
&\leq& \frac{1}{\sqrt{|A|}}\sqrt{\frac{2}{\check{\rho}}}\int
\rho(t,x)|\u(t,x)|^2\,dx\\
&\leq&
\sqrt{\frac{2C(t)e^*(\frac{\check{\rho}}{2},\check{\rho})}{\check{\rho}}}.
\end{eqnarray*}
Then
\begin{equation*}
 \e\int_0^t\int_K|\u|^2\,dxd\tau{}\leq{}
C\Big(\e\int_0^t\int|\u_x|^2\,dxd\tau{}
+{}\int_0^t|\u_A(\tau)|^2\,d\tau\Big){}\leq{}C,
\end{equation*}
and, from \eqref{2.8},
\begin{equation}\label{2.8.1}
\e\int_0^t\int_K|\u||\u_x|\,dxd\tau{}\leq{}C.
\end{equation}
Also, for the compact set $K$,
\begin{equation}
\label{2.8.1.1} \int_0^t\int_K\,|\eta^\sh(\rho,\rho(u-u^-))|\,d\tau
dx {}\leq{}C
\Big(1+\int_0^t\,E[\rho,u](\tau)\,d\tau\Big).
\end{equation}

Finally, we estimate the term
$\int_K\Big(\int_{-\infty}^x\check{\eta}(\rho,\rho\u)\,dy\Big)\,dx$.
Consider
\begin{eqnarray}
\Big|\int_{-\infty}^x\check{\eta}(\rho,\rho\u)\,dy\Big|
&=&\Big|\int_{-\infty}^x\big(\check{\eta}(\rho,\rho\u)-\alpha\rho^{\theta+1}(u-u^-)\big)dy\Big|{}
  +{}\Big|\int_{-\infty}^x\alpha\rho^{\theta+1}(u-u^-)\,dy\Big|\nonumber\\
&=&\Big|\int_{-\infty}^x r_2(\rho, \rho(u-u^-))\,dy\Big|
{}+{}\Big|\int_{-\infty}^x\alpha(\rho^\theta-(\rho^-)^\theta)\rho(u-u^-)\,dy\Big|\nonumber\\
&&\quad
+\Big|\alpha(\rho^-)^\theta\int_{-\infty}^x\rho(u-u^-)\,dy\Big|\nonumber\\
&\leq & C\Big( 1+ \int \big(\rho|u-u^-|^2+
e^*(\rho,\bar{\rho})\big)\,dx\Big){}
+{}\alpha(\rho^-)^\theta\int_{-\infty}^x\rho(u-u^-)\,dy\nonumber\\
&\leq&
C
{}+{}\alpha(\rho^-)^\theta\Big|\int_{-\infty}^x\rho(u-u^-)\,dy\Big|,
\label{Eq:some-1}
\end{eqnarray}
where we used \eqref{e-1}--\eqref{B:1-Energy},
\eqref{Eq:Taylor1}--\eqref{3.19a1} for $r_2(\rho,\rho(u-u^-)),$ and
the following inequality by using \eqref{e-1}: For $x\in K$,
\begin{eqnarray*}
\int_{-\infty}^x\rho(\rho^\theta-(\rho^-)^\theta)\,dx
  {}\leq{}C\int_{-\infty}^xe^*(\rho,\rho^-)\,dx
\leq{} C\Big(1{}+{}\int e^*(\rho,\bar{\rho})\,dx\Big).
\end{eqnarray*}

It remains to estimate
$\left|\int_{-\infty}^x\rho(u-u^-)\,dy\right|.$ For this, we
integrate equations in \eqref{Eq:NS-1} with respect to the
space-variable from $-\infty$ to $x$ and  the time-variable from $0$
to $t$:
\begin{eqnarray*}
&&\int_{-\infty}^x\rho(t,y)(u(t,y)-u^-)\,dy\\
&&={}\int_{-\infty}^x\rho_0(u_0-\bar{u})\,dy +\int_{-\infty}^x\rho_0(\bar{u}-u^-)\,dy\\
&&\quad -\int_0^t\big(\rho\u^2{}+{}p-p(\rho^-)
{}+{}u^-(\rho\u-\rho^-u^-)\big)\,d\tau{}+{}\e\int_0^t\u_x\,d\tau.
\end{eqnarray*}
Then, by a straightforward application of Lemma \ref{lemma:3.1}, we
obtain
\[
\int_K\Big|\int_{-\infty}^x\rho(t,y)(u(t,y)-u^-)\,dy
\Big|\,dx{}\leq{}C.
\]
Combining this with \eqref{Eq:some-1}, we have
\[
\int_K\Big|\int_{-\infty}^x\check{\eta}(\rho,\rho\u)(t,y)\,dy\Big|\,dx{}
\leq{}C.
\]

Using this, \eqref{2.7}, \eqref{2.8.1}, and \eqref{2.8.1.1} in
\eqref{2.6}, we conclude the proof.
\end{proof}

\begin{remark}
In the uniform estimate above, we require that the initial functions
$(\rho_0(x), u_0(x))$ satisfy

\begin{enumerate}
\item[(i)] $\rho_0(x)>0, \quad \int\rho_0(x)|u_0(x)-\bar{u}(x)|\,dx <\infty$;

\medskip
\item[(ii)] The total mechanical energy with respect to
$(\bar{\rho}, \bar{u})$ is finite:
$$
\int\Big(\frac{1}{2}\rho_0(x)|u_0(x)-\bar{u}(x)|^2
 +e^*(\rho_0(x),\bar{\rho}(x))\Big)dx=:E_0<\infty;
$$

\item[(iii)]
$\e^2\int \frac{|\rho_{0,x}(x)|^2}{\rho_0(x)^3}
\,dx\le E_1<\infty$.
\end{enumerate}

\medskip
Since our approach in dealing with the vanishing viscosity limit
below allows the vacuum, i.e. $\rho(t,x)\ge 0$, the initial
conditions (iii) and $\rho_0(x)>0$ can be removed by the standard
cutoff, $\max\{\rho_0(x), \e^{1/2}\}$, first and  mollification
$(\rho_0^\e(x), u^\e_0(x))\in C^\infty(\R)$ then, so that
$\rho_0^\e(x)\ge \e^{1/2}$ and
$$
\e^2\int
\frac{|\rho_{0,x}^\e(x)|^2}{\rho_0^\e(x)^3}
\,dx\le
E_1<\infty,
$$
for $E_1>0$ independent of $\e$.
\end{remark}

\section{$H^{-1}$--Compactness of the Weak Entropy Dissipation Measures}

In this section we establish the $H^{-1}$--compactness of entropy
dissipation measures for solutions to the Navier-Stokes equations
\eqref{Eq:NS-1} with initial data \eqref{Eq:Initial-Conditions} for
the weak entropy pairs generated by compactly supported $C^2$ test
functions $\psi$.

\begin{proposition}\label{prop:4.1}
Let $\psi: \R\to \R$ be any compactly supported $C^2$ function. Let
$(\eta^\psi, q^\psi)$ be a weak entropy pair generated by $\psi$.
Then, for the solutions $(\rho^\e, u^\e)$ with $m^\e=\rho^\e u^\e$
of the Navier-Stokes equations
\eqref{Eq:NS-1}--\eqref{Eq:Initial-Conditions}, the entropy
dissipation measures
\begin{equation}
\eta^{\psi}(\rho^\e, m^\e)_t{}+{}q^{\psi}(\rho^\e, m^\e)_x{}
\quad\mbox{are confined in a compact subset of }\,\,
H^{-1}_{loc}(\R_+^2).
\end{equation}
\end{proposition}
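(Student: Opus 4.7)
The plan is to apply Murat's $H^{-1}$-compactness lemma, which requires (a) exhibiting a splitting of the entropy dissipation as the sum of a part precompact in $H^{-1}_{loc}(\R_+^2)$ and a part bounded in the locally finite Radon measures $\mathcal{M}_{loc}$, and (b) verifying that the dissipation itself lies in a uniformly bounded set of $W^{-1,p}_{loc}$ for some $p>2$. The available ingredients are the pointwise estimates on $(\eta^\psi, q^\psi)$ from Lemma 2.1 and the uniform estimates of Section 3.

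For (a), I would first combine the entropy-flux relation \eqref{7.1.1.1} with \eqref{Eq:NS-1} to obtain the pointwise identity
\[
\eta^\psi(\rho^\e, m^\e)_t + q^\psi(\rho^\e, m^\e)_x = \e\, \eta^\psi_m(\rho^\e, m^\e)\, u^\e_{xx}
\]
for smooth solutions. Regarding $\eta^\psi_m$ as a function of $(\rho,u)$ as in Lemma 2.1(iii) and performing one $x$-integration by parts, I split the right-hand side as
\[
\e \eta^\psi_m u_{xx} = \bigl(\e \eta^\psi_m u_x\bigr)_x - \e\, \eta^\psi_{mu}(\rho,\rho u)\, u_x^2 - \e\, \eta^\psi_{m\rho}(\rho,\rho u)\, \rho_x u_x.
\]
Using $|\eta^\psi_m|\le C_\psi$ and the energy bound $\e \iint u_x^2\,dxdt\le C$ of Lemma \ref{lemma:3.1}, the first summand satisfies $\|\e \eta^\psi_m u_x\|_{L^2}^2\le C_\psi^2 C\, \e \to 0$, so its $x$-derivative is precompact in $H^{-1}_{loc}(\R_+^2)$. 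The bound $|\eta^\psi_{mu}(\rho,\rho u)|\le C_\psi$ shows that the second summand is dominated by $C_\psi\, \e u_x^2$, uniformly in $L^1_{loc}$. For the mixed third summand, Lemma 2.1 yields $|\eta^\psi_{m\rho}(\rho,\rho u)|\le C_\psi \rho^{\theta-1}$, and Cauchy--Schwarz together with $2\theta-2=\gamma-3$ gives
\[
\bigl|\e\, \eta^\psi_{m\rho}\, \rho_x u_x\bigr| \le \tfrac{C_\psi}{2}\bigl(\e\, u_x^2 + \e\,\rho^{\gamma-3}\rho_x^2\bigr),
\]
whose two summands are uniformly $L^1_{loc}$-bounded by Lemmas \ref{lemma:3.1} and \ref{lemma:3.2}. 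Thus the last two terms are uniformly in $L^1_{loc}\subset \mathcal{M}_{loc}$, completing the decomposition.

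For (b), I combine Lemma 2.1 with Lemmas \ref{lemma:3.3}--\ref{lemma:3.4}. When $\gamma\in(1,3]$, Lemma 2.1(i) gives $|\eta^\psi|+|q^\psi|\le C_\psi \rho$, and Lemma \ref{lemma:3.3} yields $\rho\in L^{\gamma+1}_{loc}$ uniformly with $\gamma+1>2$. When $\gamma>3$, Lemma 2.1(ii) adds $|q^\psi|\le C_\psi(\rho+\rho^{1+\theta})$, and the uniform bound $\rho^{\gamma+\theta}\in L^1_{loc}$ of Lemma \ref{lemma:3.4} places $q^\psi$ in $L^p_{loc}$ with $p=(\gamma+\theta)/(1+\theta)>2$. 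In either regime $\eta^\psi$ and $q^\psi$ lie in a uniform bounded subset of $L^p_{loc}$ for some $p>2$, so $\eta^\psi_t+q^\psi_x$ is uniformly bounded in $W^{-1,p}_{loc}$, and Murat's lemma yields precompactness in $H^{-1}_{loc}(\R_+^2)$.

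The main difficulty is the mixed term $\e\,\eta^\psi_{m\rho}\,\rho_x u_x$ near the vacuum: when $\gamma<3$ the factor $\rho^{\theta-1}$ is unbounded, so no naive pointwise control of the density can absorb it. Controlling this term is precisely why the weighted estimate $\e\iint \rho^{\gamma-3}\rho_x^2\,dxdt\le C$ of Lemma \ref{lemma:3.2} was set up with that exact weight, and this is the only place in the compactness argument where that delicate bound is invoked.
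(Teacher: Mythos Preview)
Your argument is correct and follows essentially the same route as the paper's proof: the same entropy identity, the same splitting into $(\e\eta^\psi_m u_x)_x$ plus the two quadratic terms, the same pointwise bounds from Lemma~2.1(iii) combined with Lemmas~\ref{lemma:3.1}--\ref{lemma:3.2} to place the quadratic terms uniformly in $L^1_{loc}$, and the same $L^{q_2}_{loc}$ bounds on $(\eta^\psi,q^\psi)$ with $q_2=\gamma+1$ for $\gamma\le 3$ and $q_2=(\gamma+\theta)/(1+\theta)$ for $\gamma>3$. The only cosmetic difference is that you invoke Murat's lemma in its ``$H^{-1}$-compact plus measure-bounded, together with $W^{-1,p}$-bounded'' form, whereas the paper phrases the final step via the interpolation compactness theorem (compact in $W^{-1,q_1}_{loc}$ for some $q_1<2$ intersected with bounded in $W^{-1,q_2}_{loc}$ for some $q_2>2$); these are equivalent here since uniform $L^1_{loc}$ bounds embed compactly into $W^{-1,q_1}_{loc}$ for $q_1<2$ in two space-time dimensions.
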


\begin{proof}
Multiplying the first equation in \eqref{Eq:NS-1} by
$\eta^{\psi}_\rho(\rho^\e, m^\e)$ and the second by
$\eta^{\psi}_m(\rho^\e, m^\e)$ and adding them up, we obtain
\begin{eqnarray}
 &&\eta^{\psi}(\rho^\e, m^\e)_t{}+{}q^{\psi}(\rho^\e, m^\e)_x\nonumber\\
&& ={}\e( \eta^{\psi}_m(\rho^\e, \rho^\e u^\e)\u^\e_x)_x{}-{}\e
\eta^{\psi}_{mu}(\rho^\e, \rho^\e u^\e)|\u^\e_x|^2{}-{} \e
\eta^{\psi}_{m\rho}(\rho^\e, \rho^\e u^\e)\rho^\e_x\u^\e_x,
\label{entropy-1}
\end{eqnarray}
where $\eta^{\psi}_{m\rho}(\rho, \rho
u)=\partial_\rho\big(\eta^{\psi}_{m}(\rho, \rho u)\big)$ and
$\eta^{\psi}_{mu}(\rho, \rho u)=\partial_u\big(\eta^{\psi}_{m}(\rho,
\rho u)\big)$.

Lemma 2.1 indicates that
$$
|\eta_{mu}^\psi(\rho^\e, \rho^\e
u^\e)|+|(\rho^\e)^{1-\theta}\eta_{m\rho}^\psi(\rho^\e, \rho^\e
u^\e)|\le C,
$$
where $C>0$ is independent of $\e$. Using this and the H\"{o}lder
inequality, we obtain that, for any $T\in (0, \infty)$,
\begin{eqnarray*}
&&\|\e \eta^{\psi}_{mu}(\rho^\e, \rho^\e u^\e)|\u^\e_x|^2{}+{} \e
\eta^{\psi}_{m\rho}(\rho^\e, \rho^\e u^\e)\rho^\e_x\u^\e_x\|_{L^1([0,T]\times\R)}\\
&&\le C_\psi\,\|(\sqrt{\e}u_x^\e,
\sqrt{\e}\rho^{\frac{\gamma-3}{2}}\rho_x^\e)\|_{L^2([0,T]\times\R)}
\le C.
\end{eqnarray*}
This yields that
\begin{equation}\label{h-1}
-{}\e \eta^{\psi}_{mu}(\rho^\e, \rho^\e u^\e)|\u^\e_x|^2{}-{} \e
\eta^{\psi}_{m\rho}(\rho^\e, \rho^\e u^\e)\rho^\e_x\u^\e_x
\qquad\mbox{is bounded in } L^1([0,T]\times\mathbb{R}),
\end{equation}
which implies its compactness in $W^{-1,q_1}_{loc}(\R_+^2),
1<q_1<2$.

Furthermore, since $|\eta_m^\psi(\rho^\e, \rho^\e u^\e)|\le C$, we
obtain
\begin{equation} \label{h-2}
\|\e \eta^{\psi}_m(\rho^\e, \rho^\e u^\e)
\u^\e_x\|_{L^2([0,T]\times\R)}\le
C\sqrt{\e}\|\sqrt{\e}u_x^\e\|_{L^2([0,T]\times\R)}\le C\sqrt{\e}\to
0\qquad \mbox{ as }\e\to 0.
\end{equation}
Combining \eqref{h-1} with \eqref{h-2} yields that
\begin{equation}\label{h-3}
\eta^{\psi}(\rho^\e, m^\e)_t{}+{}q^{\psi}(\rho^\e, m^\e)_x{}
\quad\mbox{are confined in a compact subset of }\,\,
W^{-1,q_1}_{loc}, 1<q_1<2.
\end{equation}

On the other hand, using the estimates in Lemma 2.1 (i)-(ii) and in
Lemmas \ref{lemma:3.3}--\ref{lemma:3.4}, we obtain that
$$
\eta^\psi(\rho^\e,m^\e),\,q^\psi(\rho^\e,m^\e) \qquad \mbox{are
uniformly bounded in }  L^{q_2}_{loc}(\R_+^2),
$$
for $q_2=\gamma+1>2$ when $\gamma\in(1, 3]$, and
$q_2=\frac{\gamma+\theta}{1+\theta}>2$ when $\gamma>3$. This implies
that, for some $q_2>2,$
\begin{equation}\label{h-4}
\eta^{\psi}(\rho^\e, m^\e)_t{}+{}q^{\psi}(\rho^\e, m^\e)_x{}
\qquad\mbox{are uniformly bounded in  }\,\, W^{-1,q_2}_{loc}.
\end{equation}

The interpolation compactness theorem (cf. \cite{Chen1,DCL})
indicates that, for $q_1>1$, $q_2\in(q_1, \infty]$, and $p\in [q_1,
q_2)$,
$$
\begin{array}{l}
(\hbox{compact set of}~~ W^{-1,q_1}_{loc}(\R_+^2))
\cap (\hbox{bounded set of}~~ W^{-1,q_2}_{loc}(\R_+^2))\\
\subset (\hbox{compact set of}~~ W^{-1,p}_{loc}(\R_+^2)),
\end{array}
$$
which is a generalization of Murat's lemma in \cite{Murat,Tartar}.

Combining this interpolation compactness theorem for $1<q_1<2,
q_2>2$, and $p=2$ with the facts in \eqref{h-3}--\eqref{h-4}, we
conclude the result.
\end{proof}

\section{Compensated Compactness and Measure-Valued Solutions}

In this section, we employ the estimates in Sections 3--4 to
construct the measure-valued solutions of the Cauchy problem
\eqref{Eq:NS-1}--\eqref{Eq:Initial-Conditions} for the Navier-Stokes
equations and show that the measure-valued solutions are confined by
the Tartar-Murat commutator relation for any two pairs of weak
entropy-entropy flux kernels via the method of compensated
compactness.

For convenience, we will work with measures defined on the phase
space:
$$
\H{}={}\{(\rho, u)\,:\, \rho>0\}.
$$
As in LeFloch-Westdickenberg
\cite{LW}, let $\bar{\HH}$ be a compactification of $\H$ such that
the space $C(\bar{\HH})$ is equivalent (isometrically isomorphic) to
the space
\begin{eqnarray*}
\bar{C}(\H){}={}\Big\{\phi\in C(\bar{\H})\, : {\begin{array}{ll} &
\phi(\rho,u) \mbox{ is constant on } \{\rho=0\}\,\,
\mbox{and the map} \\
&\mbox{$(\rho, u){}\to{}\lim_{s\to\infty}\phi(s\rho, su)$ belongs to
$C(\SPH\cap\bar{\H})$}
\end{array}}
\Big\},
\end{eqnarray*}
where $\mathbb{S}^1\subset\R^2$ is the unit circle. These spaces
allow to deal with the two difficulties of the problem when $\rho=0$
(vacuum) and when $\rho\gg 1$ in the large. As usual, we will not
distinguish between the functions in $\bar{C}(\H)$ and in
$C(\bar{\HH})$. The topology of $\bar{\HH}$ is the weak-star
topology induced by $C(\bar{\HH})$, which is separable and
metrizable. Note that the topology above does not distinguish points
in the compactification of the set $\{\rho=0\}$, that is, all points
in the vacuum are equivalent. Denote by $V$ the weak-star closure of
$\{\rho=0\}$ and define $\HH=\H\cup V$.

Following
Alberti-M\"{u}ller \cite{AM} (also see Ball \cite{Ball} and Tartar
\cite{Tartar}), we find that,
given any sequence of measurable functions $(\rho^\e,
u^\e){}:{}\R_+^2{}\to{}\bar{\HH}$, there exists a subsequence (still
labeled $(\rho^\e, u^\e)$) and a function
$$
\nu_{t,x}{}\in{}L^\infty_w\left(\R_+^2; \Prob(\bar{\HH}) \right)
$$
such that, for all $\phi\in C(\bar{\HH})$,
\begin{equation}
\label{Basic_Convergence}
\phi(\rho^\e(t,x), u^\e(t,x)){} \,\,\overset{*}{\rightharpoonup} \,
{} \int_{\bar{\HH}}\phi(\rho, u)\,d\nu_{t,x}(\rho, u)\qquad\,\,
\mbox{in } L^\infty\left(\R_+^2\right).
\end{equation}
The sequence of functions $(\rho^\e, u^\e)$ converges in measure to
$(\rho, m){}:{}\R_+^2{}\to{}\bar{\HH}$ if and only if
\[
\nu_{t,x}{}={}\delta_{(\rho(t,x), m(t,x))}\qquad a.e.{}\, (t,x).
\]

In what follows we will often abbreviate $\nu_{t,x}$ as $\nu$
implicitly assuming the dependence on $(t,x)$ when no confusion may
arise.

Let $B_R$ be a closed ball of radius $R$ centered at the origin. The
restriction of $\nu$ to $C(B_R\cap \bar{\H})$ can be identified with
a Radon (regular, Borel) measure $\nu_R{}\in{}C(B_R\cap
\bar{\H})^*$. By taking a sequence of radii, $R_n\to \infty$, we
obtain a probability measure $\nu$ on $\H$ such that, for any
$$
\phi{}\in{}C_0(\H){}={}\{\mbox{continuous functions, compactly
supported on}\,  \H\},
$$
we have
\begin{equation}
\int_{\H}\phi\,d\nu{}={} \la\nu,
\phi\ra_{\bar{C}(\H)\times\left(\bar{C}(\H)\right)^*},
\end{equation}
and
\begin{equation}
\label{Basic_Convergence-2} \phi(\rho^\e, u^\e){}
\overset{*}{\rightharpoonup} {} \int_{\H}\phi(\rho, u)\,d\nu\qquad
\mbox{ in } L^\infty\left(\R_+^2\right).
\end{equation}

We will often use later the same letter $\nu$ for an element of
$\left(\bar{C}(\H)\right)^*$, or $\left(C(\bar{\HH})\right)^*$, and
for its restriction (a Radon measure on $\H$) to
$\left(C_0(\H)\right)^*$, but it will be clear from the context
which one is used.

\medskip
Let $(\rho^\e,\u^\e)$ be the sequence of solutions of the
Navier-Stokes equations \eqref{Eq:NS-1} with initial data
\eqref{Eq:Initial-Conditions}. Let
$\nu=\nu_{t,x}$ be a Young measure corresponding to this sequence of
functions $(\rho^\e, u^\e)$.

In the following proposition (analogous to Proposition 2.3 in
\cite{LW}), we can extend the Young measure $\nu_{t,x}$ to a class
of test functions larger than $\bar{C}(\H)$.

\begin{proposition}
\label{PROP} The following statements hold:
\begin{itemize}

\item[(i)] For the Young measure $\nu_{t,x}$ introduced above,
\begin{equation}
\label{Decay_at_infinity} \int_\H
\big(\rho^{\gamma+1}{}+{}\rho|u|^3\big)\,d\nu_{t,x}\in L^1([0,
T]\times K).
\end{equation}

\item[(ii)]
Let $\phi(\rho, u)$ be a function such that
\begin{enumerate}

\medskip
\item[(a)] $\phi{}\in{} C_0(\bar{\H})$, i.e., continuous on $\bar{\H}$
   and zero on $\partial\H$;

\medskip
\item[(b)] $\supp\, \phi\subset\left\{ (\rho, u)\,:\, \rho^\theta
{}+{}\u\geq -c,\, u-\rho^\theta\leq c\right\}$ for some constant
$c>0$;

\medskip
\item[(c)] $|\phi(\rho, u)|{}\leq{} \rho^{\beta(\gamma+1)}$ for all $(\rho, u)$
with large $\rho$ and some $\beta\in(0,1)$.
\end{enumerate}

\medskip\noindent
Then $\phi$ is $\nu_{t,x}$--integrable and
\begin{equation}
\phi(\rho^\e, u^\e){}\,
{\rightharpoonup}{}\int_\H\phi\,d\nu_{t,x}\quad \mbox{ in }
L^1_{loc}\left(\R_+^2\right).
\end{equation}
\item[(iii)] For $\nu_{t,x}$ viewed as an element of $\left(C(\bar{\HH})\right)^*$,
\begin{equation}
\nu_{t,x}\left[\bar{\HH}\setminus\left(\H\cup V\right)\right]{}={}0,
\end{equation}
which means that $\nu_{t,x}$ is concentrated in $\H$ and/or on the
vacuum $V=\{\rho=0\}$.
\end{itemize}
\end{proposition}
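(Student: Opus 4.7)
The plan is to prove (i)--(iii) in sequence, leveraging the uniform bounds of Section 3 and the defining Young measure convergence \eqref{Basic_Convergence}.

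For (i), I take $\phi_k \in C_0(\H)$ (smooth, compactly supported in $\H$) with $\phi_k \uparrow f := \rho^{\gamma+1} + \rho|u|^3$ pointwise on $\H$. For each $k$, the Young measure convergence \eqref{Basic_Convergence-2} applied against the indicator of $[0,T]\times K$ gives
$$\int_0^T \int_K \int_\H \phi_k \, d\nu_{t, x} \, dxdt = \lim_{\e \to 0} \int_0^T \int_K \phi_k(\rho^\e, u^\e) \, dxdt \le C,$$
where the uniform upper bound follows from Lemmas \ref{lemma:3.3}--\ref{lemma:3.4}. Monotone convergence as $k \to \infty$ then yields the claimed local integrability of $f$ against $\nu_{t, x}$.

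For (ii), decompose $\phi = \phi \chi_N + \phi(1 - \chi_N)$, where $\chi_N \in C_0(\H)$ is a smooth cutoff supported in a large compact subset of $\H$, the support condition (b) on $\phi$ ensuring that $\phi \chi_N \in C_0(\H)$. The compactly supported piece is covered by \eqref{Basic_Convergence-2}. For the tail, condition (c) yields the pointwise bound $|\phi(\rho^\e, u^\e)|^{1/\beta} \le C(\rho^\e)^{\gamma+1}$; combined with Lemma \ref{lemma:3.3} and $1/\beta > 1$, the de la Vallée Poussin criterion produces equi-integrability of $\phi(\rho^\e, u^\e)$ on compact subsets of $\R_+^2$. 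Vitali's convergence theorem then upgrades the weak-$*$ limit to $L^1_{loc}$ convergence, as asserted.

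For (iii), use direction-localized cutoffs in $\bar{C}(\H)$. For each $(a, b) \in \SPH$ with $a > 0$ and each $R > 0$, set
$$\phi_{a, R}(\rho, u) := \eta(|X|/R) \, \zeta((\rho, u)/|X|), \qquad |X| := \sqrt{\rho^2 + u^2},$$
where $\eta$ is smooth with $\eta = 0$ on $[0,1]$, $\eta = 1$ on $[2,\infty)$, and $\zeta$ is a continuous bump on $\SPH \cap \bar{\H}$ supported away from the vacuum directions $(0, \pm 1)$ with $\zeta(a, b) = 1$. One checks $\phi_{a,R} \in \bar{C}(\H)$: it vanishes on $\{\rho = 0\}$ (since $\zeta((0, \pm 1)) = 0$) and its radial limit $\lim_{s \to \infty} \phi_{a, R}(s\rho, su) = \zeta((\rho, u)/|X|)$ is continuous on $\SPH \cap \bar{\H}$. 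On $\supp \phi_{a,R}$ one has $\rho \ge c_\zeta |X| \ge c_\zeta R$, so Chebyshev together with Lemma \ref{lemma:3.3} gives
$$\int_0^T \int_K \phi_{a, R}(\rho^\e, u^\e) \, dxdt \le C/R^{\gamma+1}.$$
Passing $\e \to 0$ via \eqref{Basic_Convergence} and then $R \to \infty$ (the radial limit $\zeta$ picks out mass at infinity in directions where $\zeta > 0$) shows $\nu_{t, x}$ has no mass near those directions; a countable cover of $\{(a,b) \in \SPH : a > 0\}$ by such bumps concludes $\nu_{t, x}(\bar{\HH} \setminus \HH) = 0$.

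The main technical subtlety is (iii): since $\bar{C}(\H)$ identifies the vacuum $V$ topologically with the infinity points $(0, \pm 1) \in \SPH$, no single continuous cutoff can be $1$ on $\HH$ and $0$ on $\bar{\HH} \setminus \HH$. The workaround is to localize in direction, using bumps $\zeta$ on $\SPH$ that vanish at the vacuum-direction boundary; the $L^1_{loc}$ bound on $\rho^{\gamma+1}$ then controls the mass at each individual non-vacuum infinity direction via Chebyshev. The remaining pieces in (i) and (ii) reduce to standard truncation and equi-integrability manipulations from Young-measure theory.
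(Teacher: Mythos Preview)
Your arguments for (i) and (iii) are correct and follow the paper's route; for (iii) you in fact supply considerably more detail than the paper, which just asserts that the conclusion follows from the uniform estimates in Lemmas~\ref{lemma:3.3}--\ref{lemma:3.4} together with part (i). Your direction-localized cutoffs $\phi_{a,R}\in\bar{C}(\H)$ and the Chebyshev bound on $\{\rho^\e\ge c_\zeta R\}$ are exactly the right way to make that assertion precise.

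For (ii), your overall strategy---truncate $\phi$ in phase space and control the tail uniformly in $\e$---matches the paper's, but the final step has a gap. Vitali's convergence theorem requires convergence in measure together with uniform integrability and yields \emph{strong} $L^1$ convergence; here you have neither convergence in measure of $\phi(\rho^\e,u^\e)$, nor do you need strong convergence (the assertion is weak $L^1$). What the argument actually requires is the uniform-in-$\e$ smallness of
\[
\int_{[0,T]\times K}\bigl|\phi(1-\chi_N)(\rho^\e,u^\e)\bigr|\,dx\,dt
\qquad\text{as }N\to\infty.
\]
This does follow from the inequality you isolate, $|\phi|^{1/\beta}\le C(1+\rho^{\gamma+1})$: on the support of $\phi(1-\chi_N)$ one has either $\rho$ small (where (a) and the confinement (b) force $\sup|\phi|\to 0$) or $\rho$ large (where $|\phi|\le \rho^{\beta(\gamma+1)}\le N^{(\beta-1)(\gamma+1)}\rho^{\gamma+1}$, and Lemma~\ref{lemma:3.3} bounds the integral uniformly). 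The paper establishes precisely this uniform tail estimate---its ``Claim'' in the proof of (ii), argued via Young's inequality and Chebyshev---and then interchanges the limits $\e\to 0$ and $k\to\infty$. So your de la Vall\'ee Poussin observation is correct and useful, but ``Vitali'' is the wrong wrapper; the argument closes once you replace it with the explicit tail bound above and a three-term splitting against a test function $g\in L^\infty$.
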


\begin{proof}
To prove (i), we define a cut-off function $\omega_k(\rho,u)$ that
is nonnegative and continuous, equals $1$ on the box
\[
\left\{(\rho, u){}:{} \rho^\theta\in{}[\frac{1}{k},\,k], \, |u|\leq
k\right\}
\]
and equals to $0$ outside the box
\[
\left\{(\rho, u) {}:{} \rho^\theta{}\in{}[\frac{1}{2k},\,2k],\,
|u|\leq 2k\right\}.
\]
Then the functions
$\big((\rho^\e)^{\gamma+1}+\rho^\e|u^\e|^3\big)\omega_k(\rho^\e,
u^\e)$ are in $\bar{C}(\H)$ so that
\begin{eqnarray*}
&&\lim_{\e\to0}\int_{[0,T]\times
K}\big((\rho^\e)^{\gamma+1}+\rho^\e|u^\e|^3\big)
  \omega_k(\rho^\e, u^\e)\,dxdt\\
&&=\int_{[0,T]\times
K}\Big(\int_{\H}(\rho^{\gamma+1}+\rho|u|^3)\omega_k(\rho,u)\,d\nu_{t,x}\Big)\,dxdt,
\end{eqnarray*}
where $K$ is a compact subset of $\mathbb{R}$.
Note that, by Lemmas \ref{lemma:3.3}--\ref{lemma:3.4},
\[
 \int_{[0,T]\times K}\big((\rho^\e)^{\gamma+1}+\rho^\e|u^\e|^3\big)
  \omega_k(\rho^\e, u^\e)\,dxdt{}\leq{}C,
\]
where $C>0$ is independent of $\e>0$. By the monotone convergence
theorem,
\[
\lim_{k\to\infty}\int_{\H}(\rho^{\gamma+1}+\rho|u|^3)
\omega_k(\rho,u)\,d\nu{}\,={}
\int_{\H}(\rho^{\gamma+1}+\rho|u|^3)\,d\nu
\]
is a $(t,x)$--integrable function, which is finite a.e. $(t,x)\in
[0,T]\times K$:
\[
\int_{[0,T]\times
K}\Big(\int_{\H}(\rho^{\gamma+1}+\rho|u|^3)\,d\nu_{t,x}\Big)\,dxdt
<\infty.
\]

\medskip
To prove (ii), we define another cut-off function
$\hat{\omega}_k(\rho,u)$
such that $0\leq \hat{\omega}_k(\rho, u)\leq 1$,
$\hat{\omega}_k(\rho,u)$ is $1$ on the set
\[
\left\{ \frac{1}{k}{}\leq{}|(\rho^\theta, u)|{}\leq{}k,\, \arg
(\rho^\theta,
u){}\in{}[-\frac{\pi}{2}+\frac{1}{k},\frac{\pi}{2}-\frac{1}{k}]
\right\},
\]
and $\hat{\omega}_k(\rho,u)$ is $0$ outside the set
\[
\left\{ \frac{1}{2k}{}\leq{}|(\rho^\theta, u)|{}\leq{}2k,\, \arg (
\rho^\theta,
u){}\in{}[-\frac{\pi}{2}+\frac{1}{2k},\frac{\pi}{2}-\frac{1}{2k}]
\right\}.
\]
Note that, with $\phi(\rho, u)$ satisfying (ii)(a)-(c),
$\hat{\omega}_k(\rho, u)\phi(\rho, u){}\in{} \bar{C}(\H)$ and thus
$\la\nu_{t,x}, \hat{\omega}_k\,\phi\ra$ is well-defined for
$a.e.{}\,(t,x)$.

By the Lebesgue dominated convergence theorem and (i), it follows
that
\[
\lim_{k\to\infty}\int_\H\phi\, \hat{\omega}_k\,d\nu_{t,x}{}
={}\int_{\H}\phi\,d\nu_{t,x}\qquad\, \mbox{a.e. $(t,x)\in
[0,T]\times K$,}
\]
and
\[
\lim_{k\to\infty}\int_{[0,T]\times
K}\int_\H\phi\,\hat{\omega}_k\,d\nu_{t,x}\,dxdt{}={}\int_{[0,T]\times
K}\int_{\H}\phi\,d\nu_{t,x}\,dxdt.
\]
On the other hand, by definition of Young measures,
it implies that
\begin{equation}
\label{Eq:double_limit}
\lim_{k\to\infty}\lim_{\e\to0}\int_{[0,T]\times K}\la
\nu^\e_{t,x},\phi\,\hat{\omega}_k\ra\,dxdt{}={}\int_{[0,T]\times
K}\int_{\H}\phi\,d\nu_{t,x}\,dxdt.
\end{equation}

\medskip
\noindent{\bf Claim}. {\it
$\int_{[0,T]\times K}\la \nu^\e_{t,x}, \phi\,\hat{\omega}_k\ra\,dxdt
{}\to{}\int_{[0,T]\times K}\la\nu^\e_{t,x}, \phi\ra\,dxdt$ as
$k\to\infty$
uniformly for $\e{}\in{}[0,\e_0)$.}

\medskip
If this is true, then we can interchange the limits in
\eqref{Eq:double_limit} to obtain
\begin{eqnarray*}
\lim_{\e\to0}\int_{[0,T]\times K}\phi(\rho^\e(t,x), u^\e(t,x))\,dxdt
&=& \lim_{\e\to0}\int_{[0,T]\times K}\la\nu^\e_{t,x}, \phi\ra\,dxdt
\nonumber\\
&=&\lim_{\e\to0}\lim_{k\to\infty}\int_{[0,T]\times K}
\la\nu^\e_{t,x}, \phi\hat{\omega}_k\ra\,dxdt\nonumber\\
&=& \lim_{k\to\infty}\lim_{\e\to0}
\int_{[0,T]\times K}\la\nu^\e_{t,x}, \phi\hat{\omega}_k\ra\,dxdt\nonumber\\
&=&\lim_{k\to\infty}\int_{[0,T]\times K}\int_{\H}\phi\,\hat{\omega}_k\,d\nu_{t,x}\,dxdt\nonumber\\
&=&\int_{[0,T]\times K}\int_{\H}\phi\,d\nu_{t,x}\,dxdt,
\end{eqnarray*}
which is what we want.

\medskip
We now prove the claim. With $k_1<k_2$, consider
\[
\int_{[0,T]\times K}\la\nu_{t,x}^\e,
(\hat{\omega}_{k_1}-\hat{\omega}_{k_2})\phi\ra\,dxdt.
\]
Notice that
\[
\supp (\hat{\omega}_{k_1}-\hat{\omega}_{k_2}){}\subset{}
B_{\frac{1}{k_1}}(0){}\cup{}\left( \big(B_{k_1}(0)\big)^c \cap
B_{2k_2}(0)\right),
\]
\[
\sup_{B_{\frac{1}{k_1}}(0)}|\phi(\rho, u)|{}\le c_{k_1}\to 0 \qquad
\mbox{ as } k_1{}\to{}\infty,
\]
and, if $(\rho, u){}\in{}\supp\, \phi\cap \big(B_{k_1}(0)\big)^c$,
then
\[
\rho^\theta{}\geq{} \frac{k_1}{2}.
\]

Furthermore, by the Young's inequality, for any $\alpha>0$, there
exists $C(\beta,\alpha)>0$ such that
\[
|\phi(\rho, u)|{}\leq{} C(\beta,\alpha){}+{}\alpha \rho^{\gamma+1}.
\]
Thus we can estimate
\begin{eqnarray}
 &&\left| \int_{[0,T]\times K}\la\nu_{t,x}^\e,
(\hat{\omega}_{k_1}-\hat{\omega}_{k_2})\phi\ra\,dxdt\right|\nonumber\\
&&\leq{} T|K|\,c_{k_1}{}+{}C(\beta,\alpha)\Big|\left([0,T]\times
K\right) \cap \big\{(t,x)\,:\,
(\rho^\e)^\theta{}>{}\frac{k_1}{2}\big\}
\Big|\nonumber\\
&&\quad +{}\alpha\int_{[0,T]\times
K}|\rho^\e(t,x)|^{\gamma+1}\,dxdt. \label{EXT-1}
\end{eqnarray}
By the Chebyschev inequality,
\[
\Big|\left([0,T]\times K\right) \cap \big\{(t,x)\,:\,
(\rho^\e)^\theta{}>{}\frac{k_1}{2}\big\} \Big|
{}\leq{}\big(\frac{k_1}{2}\big)^{-\frac{\gamma+1}{\theta}} \int_{
[0,T]\times K}|\rho^\e(t,x)|^{\gamma+1}\,dxdt.
\]
Using the uniform estimate in Lemma \ref{lemma:3.3}, we deduce from
\eqref{EXT-1} that
\begin{eqnarray*}
&&\Big| \int_{[0,T]\times K}\la\nu_{t,x}^\e,
(\hat{\omega}_{k_1}-\hat{\omega}_{k_2})\phi\ra\,dxdt\Big|\nonumber\\
&&\leq {}T|K|\, c_{k_1}+
C(\beta,\alpha)\big(\frac{k_1}{2}\big)^{-\frac{\gamma+1}{\theta}}{}+{}C\Delta,
\end{eqnarray*}
where $C>0$ and $c_{k_1}$ are independent of $\e$, and $\alpha>0$ is
an arbitrary constant. The claim then follows.

\medskip
The result in (iii) follows directly from the uniform estimates for
$(\rho^\e, u^\e)$ in Lemmas \ref{lemma:3.3}--\ref{lemma:3.4} and
Proposition 5.1.
\end{proof}

For simplifying the notation, we denote the entropy kernel:
\[
\chi(\xi){}:={}[\rho^{2\theta}- (u-\xi)^2]_+^\lambda,
\]
and, for any function $f(\rho, u)$ with growth slower than $\rho
|u|^3+\rho^{\gamma+\max\{1, \theta\}}$,
$$
f(\rho^\e, u^\e) {}\,\, {\rightharpoonup}{}\,\, \overline{f(\rho,
u)}(t,x){}:={}\la \nu_{t,x}, f(\rho, u)\ra.
$$

\begin{proposition}\label{prop:5.2} Let $\nu_{t,x}$ be the Young measure
determined by the solutions of the Navier-Stokes equations
\eqref{Eq:NS-1} with initial data \eqref{Eq:Initial-Conditions}.
Then the Young measure $\nu_{t,x}$ is a measure-valued solution of
\eqref{Eq:NS-1}--\eqref{Eq:Initial-Conditions}: For the test
functions $\psi\in\{\pm 1, \pm s, s^2\}$.
\begin{equation}\label{mv-sol}
\langle \nu_{t,x}, \eta^\psi\rangle_t + \langle \nu_{t,x},
q^\psi\rangle_x \le 0,
\qquad \langle\nu_{t,x}, \eta^\psi\rangle(0,\cdot)=\eta^\psi(\rho_0,
\rho_0u_0),
\end{equation}
in the sense of distributions in $\R_+^2$. Furthermore, the
measure-valued solution $\nu_{t,x}$ is confined by the following
commutator relation: For a.e. $s_1, s_2\in \R$,
\begin{eqnarray}\label{commute}
\theta(s_2-s_1)\Big(\overline{\chi(s_1)\chi(s_2)}-
\overline{\chi(s_1)}\,\, \overline{\chi(s_2)}\Big)
=(1-\theta)\Big(\overline{u\chi(s_2)}\,\, \overline{\chi(s_1)}
-\overline{u\chi(s_1)}\, \,\overline{\chi(s_2)}\Big).
\end{eqnarray}
\end{proposition}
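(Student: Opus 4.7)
The plan has two parts matching the two assertions. For the measure-valued inequality \eqref{mv-sol}, I would handle each admissible test function separately. For $\psi\in\{\pm 1,\pm s\}$, formulas \eqref{eta}--\eqref{q} show that $\eta^\psi$ and $q^\psi$ are (up to nonzero positive constants and harmless multiples of lower-order conservative variables) linear in $(\rho,m)$ and coincide with the conservative variables of \eqref{Eq:NS-1}; hence the entropy relation for the NS approximations reduces to the mass or momentum equation itself, and the only obstruction to passing to the limit is the viscous right-hand side $\e u^\e_{xx}$, which tends to zero in $H^{-1}_{\rm loc}(\R_+^2)$ by Lemma~\ref{lemma:3.1}. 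Passing to the weak limit via the extended Young-measure representation of Proposition~\ref{PROP}(ii) then yields the equality version of \eqref{mv-sol} for these four choices. For $\psi(s)=s^2$, $\eta^\psi$ is a positive multiple of the mechanical energy $\eta^*$, and the standard NS energy identity reads
\[
\eta^*(\rho^\e,m^\e)_t+q^*(\rho^\e,m^\e)_x=\e(u^\e u^\e_x)_x-\e|u^\e_x|^2;
\]
the first term on the right tends to zero in $\mathcal D'(\R_+^2)$ by Cauchy--Schwarz applied to the $L^2_{\rm loc}$ bounds on $\sqrt{\e}\,u^\e_x$ and $\sqrt{\e}\,u^\e$ available from Lemmas~\ref{lemma:3.1} and~\ref{lemma:3.4}, while the second is nonpositive, so the inequality persists in the limit. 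The initial trace is inherited from the NS initial data through the uniform $L^p_{\rm loc}$ bounds of Section~3.

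For the commutator identity \eqref{commute}, the plan is to apply the Tartar--Murat div-curl lemma to two weak-entropy pairs $(\eta^{\psi_i},q^{\psi_i})$, $i=1,2$, generated by \emph{compactly supported} $C^2$ test functions $\psi_1,\psi_2$. By Proposition~\ref{prop:4.1} each $\partial_t\eta^{\psi_i}+\partial_x q^{\psi_i}$ is compact in $H^{-1}_{\rm loc}$, and by Lemma~2.1 together with Lemmas~\ref{lemma:3.3}--\ref{lemma:3.4} the pairs $(\eta^{\psi_i},q^{\psi_i})$ are uniformly bounded in $L^{q_2}_{\rm loc}$ for some $q_2>2$. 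The div-curl lemma then yields
\[
\overline{\eta^{\psi_1}q^{\psi_2}-\eta^{\psi_2}q^{\psi_1}}
=\overline{\eta^{\psi_1}}\,\overline{q^{\psi_2}}
-\overline{\eta^{\psi_2}}\,\overline{q^{\psi_1}},
\]
where bars denote the weak-$\star$ limits identified by $\nu_{t,x}$. I would then expand each side using the kernel representation \eqref{entropy:3}, obtain double integrals against $\psi_1(s_1)\psi_2(s_2)$, and commute the $s$-integrations with the $\nu_{t,x}$-averaging by Fubini. Since $\psi_1,\psi_2$ are arbitrary compactly supported $C^2$ functions, a density argument produces the pointwise-in-$(s_1,s_2)$ kernel identity
\[
\overline{\chi(s_1)\sigma(s_2)-\chi(s_2)\sigma(s_1)}
=\overline{\chi(s_1)}\,\overline{\sigma(s_2)}
-\overline{\chi(s_2)}\,\overline{\sigma(s_1)},
\]
with flux kernel $\sigma(s)=(\theta s+(1-\theta)u)\chi(s)$. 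Substituting $\sigma$, collecting the $\theta s_i$-terms and cancelling the common $(1-\theta)u$-contribution reproduces \eqref{commute}.

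The delicate step is the Fubini interchange, since the kernel $\chi(\rho,s-u)$ has unbounded support in the $(\rho,u)$-plane, and the objects $\overline{\chi(s)}$, $\overline{u\chi(s)}$ are not directly covered by the compactly supported-$\psi$ framework of Section~4. I would handle this by exploiting that, for each fixed $(\rho,u)$ with $\rho>0$, $\chi(\rho,s-u)$ is supported in $|s-u|\le\rho^\theta$; combined with the compact $s$-support of each $\psi_i$, the explicit form \eqref{kernel}, and the $\nu_{t,x}$-integrability of $\rho^{\gamma+1}$ and $\rho|u|^3$ provided by Proposition~\ref{PROP}(i)--(ii), this yields enough domination to justify Fubini on both sides of the div-curl identity and to interpret $\overline{\chi(s)}$ and $\overline{u\chi(s)}$ as locally integrable functions of $(t,x,s)$. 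Once the kernel identity has been extracted from arbitrary compactly supported $\psi_1,\psi_2$, the passage to \eqref{commute} is purely algebraic.
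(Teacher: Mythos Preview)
Your proposal is correct and follows essentially the same route as the paper. For \eqref{mv-sol} the paper unifies your case distinction by observing directly from \eqref{2.17a} that $\eta^\psi_{m\rho}\equiv 0$ for all five test functions, so the entropy identity \eqref{entropy-1} reduces to $\eta^\psi_t+q^\psi_x=\e(\eta^\psi_m u^\e_x)_x-\e\eta^\psi_{mu}|u^\e_x|^2$ with $\eta^\psi_{mu}\ge 0$ (since $\psi''\ge 0$), and then passes to the limit; for \eqref{commute} the argument is identical to yours.
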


\begin{proof}
First, from \eqref{2.17a}, we find that, when $\psi\in \{\pm 1, \pm
s, s^2\}$,
$$
\eta^\psi_{m\rho}(\rho, \rho u)=0.
$$
Then we employ \eqref{entropy-1} and \eqref{2.16a} to obtain that
the solutions $(\rho^\e, u^\e)$ of
\eqref{Eq:NS-1}--\eqref{Eq:Initial-Conditions} satisfy
\begin{eqnarray}
&&\eta^{\psi}(\rho^\e, m^\e)_t{}+{}q^{\psi}(\rho^\e, m^\e)_x  \nonumber\\
&&\,\,={}\e( \eta^{\psi}_m(\rho^\e, m^\e)\u^\e_x)_x{}-{}\e \int
\psi''(\frac{m^\e}{\rho^\e}+ (\rho^\e)^\theta s)[1-s^2]_+^\lambda\,
ds \, |\u^\e_x|^2. \label{entropy-2}
\end{eqnarray}
When $\psi(s)\in\{\pm 1, \pm s, s^2\}$, $\psi''(s)\ge 0$, which
implies
\begin{eqnarray}
\eta^{\psi}(\rho^\e, m^\e)_t{}+{}q^{\psi}(\rho^\e, m^\e)_x\le
{}\e\big( \eta^{\psi}_m(\rho^\e, m^\e)\u^\e_x\big)_x.
\label{entropy-3}
\end{eqnarray}
Taking $\e\to 0$ in \eqref{entropy-3}, we conclude \eqref{mv-sol}.

\medskip Furthermore, combining Proposition 4.1 and the uniform
estimates in Lemmas 3.3--3.4 with the Div-Curl lemma (cf. Murat
\cite{Murat} and Tartar \cite{Tartar}), we deduce that, for any
$C^2$ compactly supported functions $\phi, \psi$, the quadratic
functions $\eta^\psi q^\phi -\eta^\phi q^\psi$ are weakly continuous
with respect to the weakly convergent physical viscosity sequence
$(\rho^\e, m^\e)\rightharpoonup (\rho, m)$:
\begin{equation}
\label{entropy-2} \eta^\psi(\rho^\e, m^\e) q^\phi(\rho^\e, m^\e)
-\eta^\phi(\rho^\e, m^\e) q^\psi(\rho^\e, m^\e)\, \rightharpoonup \,
\overline{\eta^\psi(\rho, m)}\,\, \overline{q^\phi(\rho, m)}
-\overline{\eta^\phi(\rho, m)}\,\, \overline{q^\psi(\rho, m)}
\end{equation}
in the sense of distributions in $[0,\infty)\times\R$.

In terms of the Young measure, \eqref{entropy-2} yields the
Tartar-Murat commutator relation:
\begin{equation}
\label{tartar} \overline{\eta_{\psi}q_{\phi}-\eta_{\phi}q_{\psi}}{}
={}\overline{\eta_{\psi}}\,\,\overline{q_{\phi}}{}
-{}\overline{\eta_{\phi}}\,\,\overline{q_{\psi}}.
\end{equation}
Thus, we have
\begin{eqnarray*}
&&\int \psi(s_1)\overline{\chi(s_1)}ds_1
\int\phi(s_2)\overline{(\theta s_2+(1-\theta)u)\chi(s_2)}ds_2\\
&& - \int \psi(s_2)\overline{\chi(s_2)}ds_2
\int\phi(s_1)\overline{(\theta
s_1+(1-\theta)u)\chi(s_1)}ds_1\\
&&=\int \psi(s_1)\phi(s_2)\overline{\chi(s_1)(\theta
s_2+(1-\theta)u)\chi(s_2)}ds_1ds_2\\
&&\quad - \int \psi(s_1)\phi(s_2)\overline{\chi(s_1)(\theta
s_1+(1-\theta)u)\chi(s_1)\chi(s_2)}ds_1ds_2,
\end{eqnarray*}
which holds for arbitrary functions $\psi$ and $\phi$. This yields
\begin{eqnarray*}
\overline{\chi(s_1)}\,\, \overline{(\theta
s_2+(1-\theta)u)\chi(s_2)}-
\overline{\chi(s_2)}\,\,\overline{(\theta
s_1+(1-\theta)u)\chi(s_1)}
=\theta(s_2-s_1)\overline{\chi(s_1)\chi(s_2)},
\end{eqnarray*}
which implies \eqref{commute}.
\end{proof}

\smallskip
\section{Reduction of the Measure-Valued Solutions for $\gamma\in
(3,\infty)$}

\smallskip
In this section, we prove that any connected component of the
support of the measure-valued solution  $\nu=\nu_{t,x}$ must be
bounded for a.e. $(t,x)\in\R_+^2$.

\begin{lemma}
\label{Lemma:6.1} Let $\gamma > 3$. Then
$$
\overline{\chi(\xi)}{}\in{}L^1_{loc}(\R_+^2; L^p(\mathbb{R}))\qquad
\mbox{for} \,\, 1\le p<\frac{\gamma-1}{\gamma-3}.
$$
\end{lemma}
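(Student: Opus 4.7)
The plan is to use the explicit formula $\chi(\r, \xi - u) = [\r^{2\t} - (\xi-u)^2]_+^\l$ with $\l = \frac{3-\g}{2(\g-1)} < 0$, together with Minkowski's integral inequality applied to the probability measure $\nu_{t,x}$ and the higher-integrability estimate from Proposition 5.1(i).

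First I would compute, for fixed $(\r, u)$ with $\r > 0$, the norm $\|\chi(\r, \cdot - u)\|_{L^p(\R, d\xi)}$. The substitution $t = (\xi - u)/\r^\t$ yields
\[
\|\chi(\r, \cdot - u)\|_{L^p(\R, d\xi)}^p = \r^{\t + 2\t\l p}\int_{-1}^1 (1-t^2)^{\l p}\, dt,
\]
and the $t$-integral is finite iff $\l p > -1$, equivalently $p < \frac{2(\g-1)}{\g-3}$, which is implied by the hypothesis on $p$. Denote the resulting constant by $C_p^p$.

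Next I would apply Minkowski's integral inequality to interchange the $L^p_\xi$-norm with integration against $\nu_{t,x}$:
\[
\|\overline{\chi(\xi)}\|_{L^p(\R, d\xi)} \le \int_{\HH} \|\chi(\r, \cdot - u)\|_{L^p(\R, d\xi)}\, d\nu_{t,x}(\r, u) = C_p \int_{\HH} \r^{\,\t/p + 2\t\l}\, d\nu_{t,x}.
\]
Substituting $\t = \frac{\g-1}{2}$ and $2\t\l = \frac{3-\g}{2}$, the exponent of $\r$ becomes $a := \frac{\g-1}{2p} - \frac{\g-3}{2}$, which is strictly positive precisely when $p < \frac{\g-1}{\g-3}$ and lies in $(0, 1]$ throughout that range. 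The elementary bound $\r^a \le 1 + \r^{\g+1}$, combined with the higher-integrability bound $\int_{\HH}\r^{\g+1}\, d\nu_{t,x} \in L^1_{loc}(\R_+^2)$ coming from Proposition 5.1(i), gives $\int_{\HH} \r^a\, d\nu_{t,x} \in L^1_{loc}(\R_+^2)$, and hence $\|\overline{\chi(\xi)}\|_{L^p(\R)} \in L^1_{loc}(\R_+^2)$, as claimed.

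The one technical wrinkle is that for $\g > 3$ the kernel $\chi$ is not continuous on $\bar{\HH}$: because $\l < 0$, it blows up on the boundary of its support. The pairing $\la \nu_{t,x}, \chi(\r, \xi - u)\ra$ is therefore not directly covered by the extended duality in Proposition 5.1(ii). I would handle this by a truncation--regularization: replace $\chi$ by $\chi^{M,\delta}(\r, \xi - u) := \min\bigl\{\bigl[\r^{2\t} - (\xi - u)^2 + \delta\bigr]_+^\l,\, M\bigr\}$, which lies in $\bar{C}(\H)$ and satisfies the growth and support conditions of Proposition 5.1(ii); then derive the analog of the Minkowski bound above for $\chi^{M,\delta}$ with a constant uniform in $(M,\delta)$ (the outer $t$-integral is already dominated by $\int_{-1}^1(1-t^2)^{\l p}dt < \infty$), and finally pass to the limit $\delta \to 0$ and $M \to \infty$ by monotone convergence. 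This justification is the only real obstacle; the rest is a bookkeeping computation.
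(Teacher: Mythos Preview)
Your proof is correct and essentially identical to the paper's: both apply Minkowski's integral inequality to pull the $L^p_\xi$-norm inside the $\nu_{t,x}$-integral, compute $\|\chi(\rho,\cdot-u)\|_{L^p}=C_p\,\rho^{\theta/p+2\theta\lambda}$ via the same substitution, observe that the exponent lies in $(0,1]$ precisely when $1\le p<\frac{\gamma-1}{\gamma-3}$, and then invoke the higher-integrability of $\rho$ against $\nu_{t,x}$ from Proposition~\ref{PROP}(i). The paper simply records the computation without comment, whereas your final paragraph addressing the singularity of $\chi$ via a monotone truncation is an extra (and legitimate) layer of care that the paper omits.
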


This can be seen by the following direct calculation: For any
$K\Subset \R$ and $T\in (0,\infty)$,
\begin{eqnarray*}
\int_{[0, T]\times K}\|\overline{\chi(\xi)}\|_{L^p}dxdt
&\leq&\int_{[0,T]\times K}\int_\H\,\Big(
\int[\rho^{2\theta}-(u-\xi)^2]_+^{p\lambda}\,d\xi\Big)^{1/p}
d\nu_{t,x}\, dxdt\\
&=&\int_{[0, T]\times K}\int_\H \rho^{\frac{\theta}{p}(2\lambda p
+1)}\Big(\int_{-1}^1
(1-\tau^2)^{p\lambda}d\tau\Big)^{1/p}d\nu_{t,x}\,dxdt \\
&\le & C\int_{[0,T]\times K}\int_\H \max\{1, \rho\} \, d\nu_{t,x}\,
dxdt<{}\infty,
\end{eqnarray*}
if $\frac{\theta}{p}(2\lambda p +1){}>{}0$ and $p\lambda{}>-1$,
which hold if $1\le p<\frac{\gamma-1}{\gamma-3}$.

\medskip

Let $A$ be the open set defined as
\[
A:={}\cup \{ (u-\rho^\theta, u+\rho^\theta)\,:\,(\rho,u){}\in{}
\supp\, \nu \},
\]
and let $J$ be any connected component of $A$.

\begin{proposition} When $\gamma>3$,
$J$ is bounded. That is, any connected component of the support of
the measure-valued solution $\nu$ is bounded.
\end{proposition}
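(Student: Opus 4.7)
\medskip
The plan is to argue by contradiction. Suppose that some connected component $J$ of $A$ is unbounded; by symmetry, I may assume $J\supset(b,+\infty)$ for some $b\in\R$. The goal is to derive a contradiction from the Tartar--Murat commutator relation \eqref{commute} combined with the integrability supplied by Lemma~6.1.

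First I would collect the key integrability ingredients. Using the substitution $\xi=u+\rho^\theta\tau$ and the fact that $\lambda>-1$, the explicit moment identities
\[
\int_{\R}\chi(\rho,u-\xi)\,d\xi=C_0\rho, \quad \int_{\R} u\,\chi(\rho,u-\xi)\,d\xi=C_0\rho u, \quad \int_{\R}\rho^\theta\chi(\rho,u-\xi)\,d\xi=C_1\rho^{1+\theta}
\]
hold pointwise. Combining these with Fubini and Proposition~5.1 (which, together with the Young inequality $\rho|u|\le \tfrac{2}{3}\rho+\tfrac{1}{3}\rho|u|^3$, yields $\int(\rho+\rho|u|+\rho^{1+\theta})\,d\nu_{t,x}<\infty$ at a.e.\ $(t,x)$) gives $\overline{\chi(\cdot)},\ \overline{u\chi(\cdot)},\ \overline{\rho^\theta\chi(\cdot)}\in L^1(\R)$ at a.e.\ $(t,x)$. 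Since $\overline{\chi(\xi)}=0$ off $\overline{A}$, each of these integrability statements restricts to an $L^1$ function on the unbounded interval $J$. Lemma~6.1 provides the additional, sharper $\overline{\chi(\cdot)}\in L^p(\R)$ for some $p>1$, available precisely because $\gamma>3$.

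Second, I would recast \eqref{commute} into a monotone form. Introducing the characteristic-speed kernel $q^{[s]}(\rho,u):=(\theta s+(1-\theta)u)\chi(\rho,u-s)$, whose $\nu$-average is $\overline{q^{[s]}}=\theta s\,\overline{\chi(s)}+(1-\theta)\overline{u\chi(s)}$, relation \eqref{commute} is equivalent to
\[
\theta(s_2-s_1)\,\overline{\chi(s_1)\chi(s_2)} = \overline{\chi(s_1)}\,\overline{q^{[s_2]}} - \overline{\chi(s_2)}\,\overline{q^{[s_1]}}.
\]
Dividing by $\overline{\chi(s_1)}\,\overline{\chi(s_2)}$ on $\{\overline{\chi(\cdot)}>0\}$ and setting $\Lambda(s):=\overline{q^{[s]}}/\overline{\chi(s)}$ produces the monotonicity identity
\[
\Lambda(s_2)-\Lambda(s_1) = \theta(s_2-s_1)\,\frac{\overline{\chi(s_1)\chi(s_2)}}{\overline{\chi(s_1)}\,\overline{\chi(s_2)}}\ge 0, \qquad s_1<s_2,
\]
so $\Lambda$ is nondecreasing on $J\cap\{\overline{\chi(\cdot)}>0\}$. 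Moreover, since $|u-s|<\rho^\theta$ on $\supp\chi(\cdot-s)$, one has the pointwise comparison $|\Lambda(s)-s|\le(1-\theta)\,\overline{\rho^\theta\chi(s)}/\overline{\chi(s)}$.

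To close the contradiction, I would exploit the $L^1$-integrability on $J$ to select a sequence $s_n\to+\infty$ in $J$ along which $\overline{\chi(s_n)}\to 0$ and $\overline{\rho^\theta\chi(s_n)}\to 0$. Fixing $s_1\in J$ with $\overline{\chi(s_1)}>0$, the monotonicity identity reads
\[
\frac{\overline{\chi(s_1)\chi(s_n)}}{\overline{\chi(s_1)}\,\overline{\chi(s_n)}} = \frac{\Lambda(s_n)-\Lambda(s_1)}{\theta(s_n-s_1)}.
\]
The support property that $\chi(s_1)\chi(s_n)\ne 0$ forces $\rho^\theta\ge(s_n-s_1)/2$, combined with the finite moment $\int\rho^{\gamma+1}\,d\nu<\infty$, produces a quantitative decay for $\overline{\chi(s_1)\chi(s_n)}$ as $n\to\infty$. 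Matching this against the right-hand side, which from the pointwise comparison $|\Lambda(s)-s|\le(1-\theta)\overline{\rho^\theta\chi(s)}/\overline{\chi(s)}$ can be forced to remain of order at least $1$, requires $\overline{\chi(s_n)}$ to decay faster than the $L^p$-integrability on the unbounded $J$ from Lemma~6.1 allows---the desired contradiction. The main obstacle is exactly this last quantitative reconciliation: carefully combining the Cauchy--Schwarz-type bound $\overline{\chi(s_1)\chi(s_n)}\le\overline{\chi(s_1)^2}^{1/2}\,\overline{\chi(s_n)^2}^{1/2}$ with the improved integrability of Lemma~6.1 (where the condition $\gamma>3$ enters indispensably) to produce sharp rate estimates that cannot be simultaneously satisfied.
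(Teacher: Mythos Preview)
Your setup is correct up through the monotonicity identity and the consequence
\[
\frac{\overline{\chi(s_1)\chi(s_2)}}{\overline{\chi(s_1)}\,\overline{\chi(s_2)}}\ge 1
\qquad(s_1<s_2\in J),
\]
which is exactly the paper's inequality \eqref{Formula-1}. The gap is in your closing step.

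First, the claim that the right-hand side $\frac{\Lambda(s_n)-\Lambda(s_1)}{\theta(s_n-s_1)}$ ``can be forced to remain of order at least~$1$'' does not follow from your pointwise bound $|\Lambda(s)-s|\le|1-\theta|\,\overline{\rho^\theta\chi(s)}/\overline{\chi(s)}$: choosing $s_n$ so that $\overline{\chi(s_n)}\to0$ and $\overline{\rho^\theta\chi(s_n)}\to0$ separately says nothing about the ratio. But this point is moot, since the inequality above already gives you $\overline{\chi(s_n)}\le \overline{\chi(s_1)\chi(s_n)}/\overline{\chi(s_1)}$, and your task is to bound the numerator. Second, and more seriously, the contradiction you propose via Lemma~6.1 cannot work as stated: $L^p$-membership of $\overline{\chi(\cdot)}$ on an unbounded interval gives no \emph{lower} bound on the pointwise decay rate, so ``$\overline{\chi(s_n)}$ must decay faster than $L^p$ allows'' is not a constraint. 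Any decay, however fast, is compatible with $L^p$. The crude bound $\chi(s_1)\chi(s_n)\le\rho^{4\theta\lambda}\le C|s_n|^{4\lambda}$ on the joint support only yields $\overline{\chi(s_n)}\le C|s_n|^{4\lambda}$, which is positive and contradicts nothing.

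The paper's argument supplies the missing idea. Rather than bounding $\chi(s_1)\chi(s_2)$ by a pure power of $\rho$, one factors
\[
\chi(s_2)=[\rho^\theta-u+s_2]_+^\lambda\,[\rho^\theta+u-s_2]_+^\lambda,
\]
observes that on $\supp\chi(s_1)\cap\supp\chi(s_2)$ with $s_1\ll s_2$ one has $\rho^\theta-u+s_2\ge s_2-s_1$, and uses $\lambda<0$ (here is where $\gamma>3$ enters) on \emph{that factor only}:
\[
\chi(s_1)\chi(s_2)\le |s_2-s_1|^\lambda\,\chi(s_1)\,[\rho^\theta+u-s_2]_+^\lambda.
\]
The remaining factor $[\rho^\theta+u-s_2]_+^\lambda$ is singular at the edge of the support and cannot be bounded pointwise; the second key idea is to \emph{integrate in $s_2$} over a fixed unit interval $(M_0,M_0+1)\subset J$, which converts this singularity into a bounded constant because $\lambda>-1$. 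One obtains
\[
\int_{M_0}^{M_0+1}\overline{\chi(s_1)\chi(s_2)}\,ds_2\le C|s_1|^{\lambda}\,\overline{\chi(s_1)},
\]
and then the ratio inequality, integrated in $s_2$, gives
\[
0<\int_{M_0}^{M_0+1}\overline{\chi(s_2)}\,ds_2\le C|s_1|^{\lambda}\xrightarrow[|s_1|\to\infty]{}0,
\]
a genuine contradiction against the \emph{fixed positive} number on the left. Lemma~6.1 plays only the minor role of ensuring $\overline{\chi(\cdot)}$ is locally integrable so this last integral makes sense; the contradiction does not come from any $L^p$ rate.
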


\begin{proof}
Note that
\[
\supp\, \chi(\xi){}={}\{ (\rho,u)\,:\, u-\rho^\theta\le s\le
u+\rho^\theta\}.
\]
By definition of $J$, $\chi(\xi)>0$ for $a.e.\,\,\xi\in J$.

{}From \eqref{commute}, we obtain that,
if $\chi(\xi_1)\,\chi(\xi_2)\not=0$, then
\begin{equation}
\label{LPT-29} \frac{1-\theta}{\theta}\frac{1}{\xi_2-\xi_1}
\Big(\frac{\overline{u\chi(\xi_2)}}{\overline{\chi(\xi_2)}}{}
-{}\frac{\overline{u\chi(\xi_1)}}{\overline{\chi(\xi_1)}}\Big) =
\frac{\overline{\chi(\xi_1)\chi(\xi_2)}}
{\overline{\chi(\xi_1)}\,\overline{\chi(\xi_2)}}{} -1.
\end{equation}

Taking the limits $\xi_1,\xi_2{}\to{}\xi$ in \eqref{LPT-29} (cf.
\cite{LPT}, pp. 426), we conclude
that
\begin{equation}\label{monotone-0}
\frac{1-\theta}{\theta}\frac{\partial}{\partial
\xi}\Big(\frac{\overline{u\chi(\xi)}}{\overline{\chi(\xi)}}\Big){}
={}\frac{\overline{\chi^2(\xi)}}{\big(\overline{\chi(\xi)}\big)^2}-1{}\geq
0.
\end{equation}
This implies that
the function
\begin{equation}\label{monotone-1}
\frac{1-\theta}{\theta}\frac{\overline{u\chi(\xi)}}{\overline{\chi(\xi)}}
\quad\mbox{is non-decreasing on $J$.}
\end{equation}
Consequently, from \eqref{LPT-29}, we obtain
\begin{equation}
\label{Formula-1}
\frac{\overline{\chi(\xi_1)\chi(\xi_2)}}{\overline{\chi(\xi_1)}}{}\geq{}\overline{\chi(\xi_2)}
\qquad\, \mbox{a.e.}\,\,\,\,\xi_1,\xi_2\in J,\quad \xi_1<\xi_2.
\end{equation}

On the contrary, suppose now that $J$ is unbounded from below, that
is, $\inf\{s{}:{}s\in J\}=-\infty$.

We fix $M_0>0$ such that $M_0+1{}\in{} J$ and restrict
$\xi_2{}\in{}(M_0,M_0+1)$. We will take $\xi_1\leq-2|M_0|$.
For such $\xi_1$,
\begin{equation}
\label{big_xi_1} |M_0-\xi_1|>\frac{|\xi_1|}{2}.
\end{equation}
If $(\rho,u)\in \supp\,\chi(\xi_2)\cap \supp\,\chi(\xi_1)$, then, by
the above assumptions on $\xi_1$ and $M_0$, we have
\[
\rho^\theta -u+\xi_2=\rho^\theta-u+\xi_1+ (s_2-s_1)\geq s_2-s_1\ge
M_0-s_1>\frac{|\xi_1|}{2}.
\]
Since $\gamma>3$, i.e. $\lambda<0$, it follows that
\begin{eqnarray}
\int\chi(\xi_1)\chi(\xi_2)\, d\nu &=&\int
\chi(\xi_1)[\rho^\theta-u+s_2]_+^\lambda[\rho^\theta+u-s_2]_+^\lambda
\,d\nu\nonumber\\
&\leq& 2^{-\lambda}|\xi_1|^\lambda\int_{\supp\,\chi(\xi_2)}
\chi(\xi_1)[\rho^\theta+u-\xi_2]_+^\lambda\,d\nu. \label{Eq-1}
\end{eqnarray}
We integrate \eqref{Eq-1} in $\xi_2$ over the interval
$(M_0,\,M_0+1)$ to obtain
\begin{eqnarray}
&&\int_{M_0}^{M_0+1}\int\chi(\xi_1)\chi(\xi_2)\, d\nu d\xi_2\nonumber\\
&&\leq 2^{-\lambda}|\xi_1|^\lambda
\int_{M_0}^{M_0+1}\int_{\supp\,\chi(\xi_2)}
\chi(\xi_1) [\rho^\theta+u-\xi_2]^\lambda_+ d\nu d\xi_2\nonumber\\
&&=2^{-\lambda}|s_1|^\lambda
\int\chi(\xi_1)\Big(\int_{(M_0,M_0+1)\cap(u-\rho^\theta,u+\rho^\theta)}
[\rho^\theta+u-\xi_2]_+^\lambda\,d\xi_2 \Big)d\nu. \nonumber\\
\label{Eq-2}
\end{eqnarray}

We now consider the integral in the parentheses in \eqref{Eq-2}.

When $\rho^\theta+u\geq M_0+2$, then
$\rho^\theta+u-\xi_2{}\geq{}M_0+2-(M_0+1){}={}1$ and
\[
\int_{(M_0,M_0+1)\cap(u-\rho^\theta,u+\rho^\theta)}
[\rho^\theta+u-\xi_2]_+^\lambda\,d\xi_2{}\leq{}1,
\]
since $\lambda<0$.

When $\rho^\theta+u<M_0+2$, then
\begin{eqnarray*}
\int_{(M_0,M_0+1)\cap(u-\rho^\theta,u+\rho^\theta)}[\rho^\theta+u-\xi_2]_+^\lambda\,d\xi_2
&\leq&\int_{M_0}^{M_0+1}[\rho^\theta+u-\xi_2]_+^\lambda\,d\xi_2\\
&\leq&
\frac{1}{1+\lambda}[\rho^\theta+u-M_0]_+^{1+\lambda}\\
&\leq& \frac{1}{1+\lambda} 2^{1+\lambda},
\end{eqnarray*}
since $1+\lambda{}>{}0$.

Combining the two observations above into \eqref{Eq-2}, we find that
there exists  $C=C(\lambda)>0$ such that
\begin{equation}
\int_{M_0}^{M_0+1}\int\chi(\xi_1)\chi(\xi_2)d\nu
d\xi_2{}\leq{}C(\lambda)|\xi_1|^{\lambda}\overline{\chi(\xi_1)}.
\end{equation}
Combining this with \eqref{Formula-1}, we obtain
\[
C(\lambda)|\xi_1|^\lambda{}\geq{}\int_{M_0}^{M_0+1}\overline{\chi(\xi_2)}d\xi_2\equiv
C(M_0,\lambda)>0.
\]
Since $\lambda<0$ and $|\xi_1|$ can be chosen arbitrary large,
we arrive at a contradiction.

The case when $J$ is unbounded from above can be treated similarly.
\end{proof}

\medskip
With this proposition, a simple argument (cf. \cite{LPT}, Lemma 6)
 implies that $\nu$ is reduced to a Dirac mass on the set
$\{\rho>0 \}$ or is supported completely in the vacuum
$V=\{\rho=0\}$ for the case $\gamma>3$. This can be seen as follows:
Let $J=(s_-, s_+)$ be the open connected component. Then the values
$(\rho, u)$ such that $\chi(s)>0$ in an interval $(s_+-\varepsilon,
s_+)$ satisfy
$$
u+\rho^\theta\ge s_+-\varepsilon.
$$
Since $s_-\le u-\rho^\theta$ for these $(\rho, u)$ values, we have
\begin{equation}\label{monotone-2}
\lim_{s\to s_+}\frac{\overline{u\chi(s)}}{\overline{\chi(s)}} \ge
\min\{u\,:\, (\rho, u)\in \supp\, \nu, u+\rho^\theta =s_+\} \ge
\frac{s_++s_-}{2}.
\end{equation}
Similarly, we have
\begin{equation}\label{monotone-3}
\lim_{s\to s_-}\frac{\overline{u\chi(s)}}{\overline{\chi(s)}} \le
\frac{s_++s_-}{2}.
\end{equation}
Combining \eqref{monotone-2}--\eqref{monotone-3} with
\eqref{monotone-1}, we conclude that
$\frac{\overline{u\chi(s)}}{\overline{\chi(s)}}$ is constant, which
implies from \eqref{monotone-0} that
$$
\overline{\chi(s)^2}={\overline{\chi(s)}}^2.
$$
Since $\nu_{t,x}$ is a probability measure,
$$
\langle \nu_{t,x}, (\chi(s)-\langle \nu_{t,x},
\chi(s)\rangle)^2\rangle=0 \qquad \mbox{for any}\,\, s\in \R,
$$
which yields
$$
\supp\, \nu_{t,x}\subset \{\chi(s)=\langle \nu_{t,x},
\chi(s)\rangle\} \qquad\mbox{for any}\,\, s\in \R.
$$
This arrives at the conclusion. That is, in the phase coordinates
$(\rho, m), m=\rho u$,
$$
\nu_{t,x}=\delta_{(\rho(t,x), m(t,x))}
$$
for some
$(\rho(t,x), m(t,x))$.

\bigskip

When $\gamma=3$, then $\theta=1$ and the commutator relation
\eqref{commute} reads
\[
\overline{\chi(s_1)\chi(s_2)}{}={}\overline{\chi(s_1)}\,\,\overline{\chi(s_2)},
\]
which implies $\overline{\chi(s)^2}={\overline{\chi(s)}}^2$ by
taking $s_1=s_2$. This again implies that $
\nu_{t,x}=\delta_{(\rho(t,x), m(t,x))} $ for some $(\rho(t,x),
m(t,x))$.

\begin{proposition}\label{thm:6.1}
When $\gamma\ge 3$, the measure-valued solution $\nu_{t,x}$ is a
Dirac mass in the phase coordinates $(\rho, m)$:
$$
\nu_{t,x}=\delta_{(\rho(t,x), m(t,x))}.
$$
\end{proposition}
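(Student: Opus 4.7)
The plan is to combine the two ingredients already set up in the excerpt: Proposition 6.1, which tells us that for $\gamma>3$ every connected component of $\supp\,\nu$ generates a bounded interval $J=(s_-,s_+)$ in the Riemann-invariant variable, and the monotonicity identity \eqref{monotone-0}, derived from the commutator relation \eqref{commute}. First I would dispose of the case $\gamma=3$, where $\theta=1$, so \eqref{commute} degenerates to $\overline{\chi(s_1)\chi(s_2)}=\overline{\chi(s_1)}\,\overline{\chi(s_2)}$; setting $s_1=s_2=s$ immediately yields $\overline{\chi(s)^2}=\overline{\chi(s)}^{\,2}$.

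For $\gamma>3$ I would fix a connected component $J=(s_-,s_+)$ of the open set $A$ defined before Proposition 6.1 and study the boundary behavior of the ratio $R(s):=\overline{u\chi(s)}/\overline{\chi(s)}$ on $J$. The key geometric observation is that $(\rho,u)\in\supp\,\nu$ with $\chi(s;\rho,u)>0$ and $s$ close to $s_+$ must satisfy $u+\rho^\theta\ge s_+-\e$ while, since $J$ is a connected component, $u-\rho^\theta\ge s_-$; averaging against $\chi(s)$ and letting $s\to s_+$ forces $R(s_+)\ge (s_++s_-)/2$, and symmetrically $R(s_-)\le (s_++s_-)/2$. Combined with the monotonicity of $R$ on $J$ from \eqref{monotone-1}, this pins $R$ down to the constant $(s_++s_-)/2$ throughout $J$. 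Feeding this back into \eqref{monotone-0} gives $\overline{\chi(s)^2}=\overline{\chi(s)}^{\,2}$ for a.e.\ $s\in J$, and trivially for $s\notin\overline{J}$ since both sides vanish.

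From the identity $\overline{\chi(s)^2}=\overline{\chi(s)}^{\,2}$ and the fact that $\nu_{t,x}$ is a probability measure, I would invoke the variance identity
\[
\big\langle\nu_{t,x},\big(\chi(\cdot;s)-\langle\nu_{t,x},\chi(\cdot;s)\rangle\big)^2\big\rangle=0
\qquad\text{for all } s\in\R,
\]
so that for each $s$ the function $(\rho,u)\mapsto\chi(\rho,u;s)$ is $\nu_{t,x}$-essentially constant. Using the explicit formula $\chi(\rho,u;s)=[\rho^{2\theta}-(s-u)^2]_+^{\lambda}$, the family $\{\chi(\,\cdot\,;s)\}_{s\in\R}$ separates points of $\H=\{\rho>0\}$: the support of $\chi(\cdot;s)$ in $s$ is precisely the interval $[u-\rho^\theta,u+\rho^\theta]$, so its endpoints recover $(\rho,u)$. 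Therefore $\nu_{t,x}$ can charge at most one point of $\H$; combined with Proposition 5.1(iii), which forces the remaining mass onto the vacuum set $V=\{\rho=0\}$ (a single point in $\bar\HH$), we conclude $\nu_{t,x}=\delta_{(\rho(t,x),m(t,x))}$ in the $(\rho,m)$ coordinates.

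The main obstacle I anticipate is the boundary analysis of $R(s)$ at $s_\pm$: the inequality $R(s_+)\ge (s_++s_-)/2$ requires carefully disentangling the contribution of $\chi(s)$ concentrated near the boundary of its support, and exploiting that the pair $(u-\rho^\theta, u+\rho^\theta)$ lies in the closure of $J$ for $\nu$-a.e.\ point of the support. The remainder of the argument is essentially algebraic, packaging the monotonicity and variance identities.
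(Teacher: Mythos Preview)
Your proposal is correct and follows essentially the same route as the paper: the $\gamma=3$ degeneration of \eqref{commute}, the boundary pinching of $R(s)=\overline{u\chi(s)}/\overline{\chi(s)}$ at $s_\pm$ via the geometric constraints $u+\rho^\theta\ge s_+-\varepsilon$ and $u-\rho^\theta\ge s_-$, the monotonicity from \eqref{monotone-0}--\eqref{monotone-1} forcing $R$ constant, and the variance argument reducing $\nu$ to a Dirac mass are exactly the steps the paper carries out between Proposition~6.1 and Proposition~6.2 (cf.\ \eqref{monotone-2}--\eqref{monotone-3}). One small point to keep straight in your write-up: for $\gamma>3$ one has $\theta>1$, so the factor $(1-\theta)/\theta$ in \eqref{monotone-1} is negative and $R$ itself is non\emph{increasing}; the squeeze $R(s_-)\le (s_++s_-)/2\le R(s_+)$ combined with $R(s_-)\ge R(s_+)$ still forces $R$ constant, but be careful to state the monotonicity with the correct sign.
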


\medskip
\section{Reduction of the Measure-Valued Solutions for $\gamma\in
(1, 3)$}

In this section, we directly prove that any connected component of
the support of the measure-valued solution $\nu=\nu_{t,x}$ is
bounded.

\begin{lemma}
\label{Lemma:7.1} When $\gamma{}\in{}(1,3)$, $\overline{\chi(\xi)}$
is a continuous and weakly differentiable function for which
\[
\frac{\partial}{\partial
\xi}\overline{\chi(\xi)}{}\in{}L^1_{loc}(\R_+^2; L^1(\mathbb{R})).
\]
\end{lemma}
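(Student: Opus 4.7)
My plan is to bound $\partial_\xi \chi$ in $L^1(\R)$ uniformly in $(\rho,u)$, with a bound that is integrable against the Young measure by the a priori estimates already established, and then invoke Fubini to conclude.

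First I would observe that for $\gamma \in (1,3)$ we have $\lambda = \frac{3-\gamma}{2(\gamma-1)} > 0$, so $\chi(\rho,u;\xi) = [\rho^{2\theta} - (u-\xi)^2]_+^\lambda$ is a continuous, non-negative function of $\xi$ supported on $[u-\rho^\theta,\,u+\rho^\theta]$, vanishing at the endpoints and attaining its maximum $\rho^{2\theta\lambda} = \rho^{(3-\gamma)/2}$ at $\xi = u$. A direct differentiation gives
\[
\partial_\xi \chi(\rho,u;\xi) = 2\lambda (u-\xi) [\rho^{2\theta} - (u-\xi)^2]_+^{\lambda-1},
\]
which is integrable in $\xi$ even when $\lambda < 1$ because $\lambda - 1 > -1$. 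Continuity of $\overline{\chi(\xi)}(t,x)$ in $\xi$ for a.e.\ $(t,x)$ will then follow by dominated convergence, using the uniform-in-$\xi$ bound $\chi \leq \rho^{(3-\gamma)/2}$ together with the $\nu_{t,x}$-integrability of this bound (see below).

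The key step is the sharp $L^1(\R)$ bound: since $\chi(\rho,u;\cdot)$ is unimodal in $\xi$ with single peak at $\xi = u$, we have
\[
\int_{\R} |\partial_\xi \chi(\rho,u;\xi)|\, d\xi
= 2\sup_{\xi}\chi(\rho,u;\xi)
= 2 \rho^{(3-\gamma)/2}.
\]
This is the only nontrivial ingredient, and it avoids any explicit computation of the singular integral $\int [\rho^{2\theta}-(u-\xi)^2]_+^{\lambda-1} d\xi$.

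Next I would apply Fubini and combine with the moment estimates already established. For any compact $K\Subset \R_+^2$,
\[
\int_K \int_{\R}\Big| \int_{\HH} \partial_\xi \chi\, d\nu_{t,x}\Big|\, d\xi\, dtdx
\leq 2 \int_K \int_{\HH} \rho^{(3-\gamma)/2}\, d\nu_{t,x}\, dtdx.
\]
Since $\rho^{(3-\gamma)/2} \leq 1 + \rho^{\gamma+1}$ for all $\rho \ge 0$ (as $\frac{3-\gamma}{2} < \gamma+1$ for $\gamma>1$), Proposition~\ref{PROP}(i) (itself a consequence of Lemma~\ref{lemma:3.3}) gives finiteness of the right-hand side. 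Defining the candidate derivative
\[
h(t,x,\xi) := \int_{\HH} \partial_\xi \chi(\rho,u;\xi)\, d\nu_{t,x}(\rho,u),
\]
one more application of Fubini yields, for a.e.\ $(t,x)$ and every $\xi_1 < \xi_2$,
\[
\overline{\chi(\xi_2)}(t,x) - \overline{\chi(\xi_1)}(t,x)
= \int_{\HH}\!\int_{\xi_1}^{\xi_2} \partial_\xi \chi\, d\xi\, d\nu_{t,x}
= \int_{\xi_1}^{\xi_2} h(t,x,\zeta)\, d\zeta,
\]
which exhibits $\overline{\chi(\xi)}(t,x)$ as absolutely continuous in $\xi$ with weak derivative $h \in L^1_{loc}(\R_+^2; L^1(\R))$, completing the proof.

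The only real obstacle is the sharp $L^1$-in-$\xi$ bound, where the unimodality trick is essential: a naive estimate using $|u-\xi| \leq \rho^\theta$ inside $\partial_\xi \chi$ would force one to integrate the singular factor $[\rho^{2\theta}-(u-\xi)^2]^{\lambda-1}$ directly and would produce a worse power of $\rho$ as $\gamma \to 3^-$; the total-variation identity $\int|\partial_\xi \chi|\,d\xi = 2\sup_\xi \chi$ sidesteps this cleanly.
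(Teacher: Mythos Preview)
Your proof is correct and follows essentially the same route as the paper: both compute $\int_{\R}|\partial_\xi\chi(\rho,u;\xi)|\,d\xi = 2\rho^{2\theta\lambda}=2\rho^{(3-\gamma)/2}$ and then invoke Proposition~\ref{PROP}(i). Your unimodality/total-variation observation is exactly equivalent to the paper's direct integration after splitting at the sign change $\xi=u$; your treatment is in fact more careful than the paper's in spelling out the Fubini step that identifies $h(t,x,\xi)=\la\nu_{t,x},\partial_\xi\chi\ra$ as the weak derivative, though your closing remark slightly overstates the danger of the ``naive'' estimate (it would only worsen the constant as $\gamma\to 3^-$, not the power of $\rho$).
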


This can been seen as follows: We compute
\[
\partial_\xi\chi(\xi){}={}2\lambda(u-s)[\rho^{2\theta}-(u-\xi)^2]_+^{\lambda-1},
\]
and
\begin{eqnarray*}
\int|\partial_\xi\overline{\chi(\xi)}|d\xi &=&2\lambda\int
\Big(\int_{u-\rho^\theta}^u(u-s)[\rho^{2\theta}-(u-\xi)^2]_+^{\lambda-1}\,d\xi\Big)d\nu_{t,x}\\
&&+2\lambda\int\Big(\int^{u+\rho^\theta}_u(s-u)[\rho^{2\theta}-(u-\xi)^2]_+^{\lambda-1}\,d\xi\Big)
d\nu_{t,x}\\
&\le &C(\lambda)\int \rho^{2\theta\lambda}\,d\nu_{t,x}{}\in
L^1_{loc}(\R_+^2),
\end{eqnarray*}
since $0<2\theta\lambda{}\leq{}\gamma+1$ and by using  Proposition
\ref{PROP}(i).

\bigskip
Let $A$ be the open set defined as
\[
A :={}\cup \{ (u-\rho^\theta,\rho^\theta+u)\,:\,(\rho,u){}\in{}
\supp \nu \}
\]
and let $J$ be any connected component of $A$.

\begin{proposition} When $\gamma\in (1,3)$,
$J$ is bounded.
\end{proposition}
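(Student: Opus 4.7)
The plan is to argue by contradiction in the spirit of Proposition~6.2, exploiting the fact that when $\lambda>0$ (i.e. $\gamma\in(1,3)$) the forced $\rho$-concentration on the common support of $\chi(\xi_1)$ and $\chi(\xi_2)$ plays the role that the $|u-\xi|$-decay of $\chi$ played for $\gamma>3$. Without loss of generality I will assume $J$ is unbounded from above, say $J\supset(s_0,+\infty)$; the other case is symmetric.

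First, following the calculation of Section~6, I will derive the monotonicity of $g(\xi):=\overline{u\chi(\xi)}/\overline{\chi(\xi)}$ on $J$ by rewriting \eqref{commute} as
\[
\overline{\chi(\xi_1)\chi(\xi_2)} = \overline{\chi(\xi_1)}\,\overline{\chi(\xi_2)}\Big(1+\frac{1-\theta}{\theta(\xi_2-\xi_1)}\big(g(\xi_2)-g(\xi_1)\big)\Big),
\]
and passing to the diagonal $\xi_1\to\xi_2$, which is legitimate by the continuity and weak differentiability of $\overline{\chi(\xi)}$ from Lemma~\ref{Lemma:7.1}. This will yield the lower bound
\[
\overline{\chi(\xi_1)\chi(\xi_2)}\ge\overline{\chi(\xi_1)}\,\overline{\chi(\xi_2)}, \qquad \xi_1<\xi_2\text{ in }J.
\]

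Second, I will prove a matching decay upper bound. On $\supp\chi(\xi_1)\cap\supp\chi(\xi_2)$ the constraints $u-\rho^\theta\le\xi_1$ and $\xi_2\le u+\rho^\theta$ force $\rho\ge R:=\big((\xi_2-\xi_1)/2\big)^{1/\theta}$. Using the pointwise bound $\chi(\xi_1)\chi(\xi_2)\le\rho^{4\theta\lambda}=\rho^{3-\gamma}$, the elementary inequality $\rho^{3-\gamma}\le R^{-2(\gamma-1)}\rho^{\gamma+1}$ on $\{\rho\ge R\}$, and the moment bound $\int\rho^{\gamma+1}\,d\nu_{t,x}<\infty$ from Proposition~\ref{PROP}(i), I will obtain
\[
\overline{\chi(\xi_1)\chi(\xi_2)}\le \frac{C}{(\xi_2-\xi_1)^4},
\]
where the exponent $4$ comes from the identity $2(\gamma-1)/\theta=4$. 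Combined with the lower bound this gives $\overline{\chi(\xi_1)}\,\overline{\chi(\xi_2)}\le C(\xi_2-\xi_1)^{-4}$.

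To close the argument, I will fix any $\xi_*\in J$ with $\overline{\chi(\xi_*)}>0$ (possible since $\overline{\chi}$ is continuous and strictly positive on $J$) so that $\overline{\chi(\xi)}\le C'(\xi-\xi_*)^{-4}$ for $\xi\in J\cap(\xi_*,\infty)$. On the other hand, the pointwise inequality $u\ge\xi-\rho^\theta$ on $\supp\chi(\xi)$, together with the identity $\rho^\theta\cdot\rho^{2\theta\lambda}=\rho$ and the energy bound $\overline{\rho}\le C''$, will give $g(\xi)\ge\xi-C''/\overline{\chi(\xi)}$. Since $g$ is non-decreasing, it has a limit $L^+\in[g(\xi_*),+\infty]$ at $+\infty$: if $L^+<\infty$ then the previous inequality forces $\overline{\chi(\xi)}\ge C''/(\xi-L^+)$ for large $\xi$, directly contradicting the $\xi^{-4}$-decay; if $L^+=+\infty$, I will rule this out by iterating the decay estimate and using the finite moment identity $\int_J g(\xi)\overline{\chi(\xi)}\,d\xi=B(\lambda)\,\overline{\rho u}<\infty$ coming from $\int\chi(\xi)\,d\xi=B(\lambda)\rho$, with $B(\lambda)=\int_{-1}^1(1-s^2)^\lambda\,ds$. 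The main obstacle will be this second subcase, where a delicate balance between the rapid decay of $\overline{\chi(\xi)}$ and the growth of $g$ must be exploited.
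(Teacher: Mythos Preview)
Your decay estimate $\overline{\chi(\xi_1)\chi(\xi_2)}\le C(\xi_2-\xi_1)^{-4}$ is correct and elegant, and the resulting bound $\overline{\chi(\xi)}\le C'(\xi-\xi_*)^{-4}$ follows. But the closing argument does not work as written. In the case $L^+<\infty$, your inequality $g(\xi)\ge\xi-C''/\overline{\chi(\xi)}$ together with $g(\xi)\le L^+$ gives
\[
\frac{C''}{\overline{\chi(\xi)}}\;\ge\;\xi-g(\xi)\;\ge\;\xi-L^+,
\]
i.e.\ $\overline{\chi(\xi)}\le C''/(\xi-L^+)$, which is an \emph{upper} bound, not the lower bound $\overline{\chi(\xi)}\ge C''/(\xi-L^+)$ you claim. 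So there is no contradiction with the $\xi^{-4}$ decay. In the case $L^+=\infty$, the moment identity $\int g(\xi)\overline{\chi(\xi)}\,d\xi=B(\lambda)\overline{\rho u}<\infty$ is compatible with $g(\xi)\to\infty$ as long as $g$ grows more slowly than $\xi^{3}$ against the $\xi^{-4}$ decay of $\overline{\chi}$; ``iterating the decay estimate'' is not a concrete mechanism here, and I do not see how to force a contradiction along these lines.

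The paper proceeds quite differently. It first shows $\overline{\chi(\xi)}\to 0$ at the ends of $J$ (via the moment bounds), so there exists $\xi_2\in J$ with $\overline{\chi'(\xi_2)}>0$. It then derives from the commutator relation a \emph{three-point} identity in $\xi_1<\xi_2<\xi_3$, differentiates it in $\xi_2$, and uses the pointwise estimate
\[
[\chi'(\xi_2)]_+\le\frac{2\lambda}{\xi_2-\xi_1}\,\chi(\xi_2)\qquad\text{on }\ \supp\chi(\xi_1)\cap\supp\chi(\xi_2),
\]
which is exactly the $\lambda>0$ analogue of the $\gamma>3$ trick (now the constraint $\rho^\theta+\xi_2-u\ge \xi_2-\xi_1$ provides the gain). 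Letting $\xi_1\to-\infty$ then forces $\overline{\chi'(\xi_2)}\le o(1)$, contradicting the choice of $\xi_2$. You may be able to salvage your approach, but as it stands the contradiction step is missing; the paper's route via $[\chi']_+$ and the three-point identity avoids the dichotomy on $L^+$ altogether.
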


\begin{proof} We divide the proof into three steps.

{\it Step 1}. On the contrary, suppose as before that $J$ is
unbounded from below and let $M_0{}={}\sup\{s{}:{}s\in
J\}\in(-\infty,\infty]$.

Let $\xi_1, \xi_2, \xi_3\in (-\infty, M_0)$ with
$\xi_1<\xi_2<\xi_3$. From equation \eqref{commute}, it can be
derived that
\begin{eqnarray}
&&(\xi_2-\xi_1)\frac{\overline{\chi(\xi_1)\chi(\xi_2)}}{\overline{\chi(\xi_1)}}
{}+{}(\xi_3-\xi_2)\frac{\overline{\chi(\xi_3)\chi(\xi_2)}}{\overline{\chi(\xi_3)}}\nonumber\\
&&=(\xi_3-\xi_1)\overline{\chi(\xi_2)}\frac{\overline{\chi(\xi_1)\chi(\xi_3)}}
{\overline{\chi(\xi_1)}\,\overline{\chi(\xi_3)}}. \label{LPT-50}
\end{eqnarray}
Differentiating this equation in $\xi_2$ and dividing by
$\xi_3-\xi_1$, we obtain
\begin{eqnarray}
&&\frac{\xi_2-\xi_1}{\xi_3-\xi_1}\frac{\overline{\chi(\xi_1)\chi'(\xi_2)}}{\overline{\chi(\xi_1)}}
{}+{}\frac{\xi_3-\xi_2}{\xi_3-\xi_1}\frac{\overline{\chi(\xi_3)\chi'(\xi_2)}}{\overline{\chi(\xi_3)}}
{}+{}\frac{1}{\xi_3-\xi_1}\frac{\overline{\chi(\xi_1)\chi(\xi_2)}}{\overline{\chi(\xi_1)}}
{}-{}\frac{1}{\xi_3-\xi_1}\frac{\overline{\chi(\xi_3)\chi(\xi_2)}}{\overline{\chi(\xi_3)}}\nonumber\\
&&{}={}
\overline{\chi'(\xi_2)}\frac{\overline{\chi(\xi_1)\chi(\xi_3)}}
{\overline{\chi(\xi_1)}\,\overline{\chi(\xi_3)}}. \label{49}
\end{eqnarray}

Our strategy is to take $\xi_1\to-\infty$ and show that the
left-hand side of \eqref{49} has a smaller order than the right-hand
side, which arrives at a contradiction.

\medskip
{\it Step 2}. {\it Claim: $
 \overline{\chi(\xi)}{}\to{}0$  as $\xi\to-\infty$ and $\xi\to M_0$}.

\medskip
If $M_0<\infty$, then the result follows by the definition of $J$
and the fact that $\overline{\chi(s)}$ is continuous (which follows
from Lemma \ref{Lemma:7.1}).

\medskip
We now  show that $\overline{\chi(s)}\to 0$ as $|s|\to\infty$ for
$M_0=\infty$.

Using Lemma 3.3 and Young's inequality, we have
\begin{eqnarray*}
\overline{\chi(s)}&=&\int_\H[\rho^{2\theta}-(u-s)^2]_+^\lambda
d\nu{}\leq{}
\int_{\H\,\cap\,\supp\,\chi(s)}\rho^{2\theta\lambda}d\nu\\
&\leq&\e^{2\lambda}{}+{}\int_{\H\,\cap\{\rho^\theta\geq\e\}\,\cap\,\supp\,\chi(s)}
\big(C(\delta){}+{}\delta\rho^{\gamma+1}\big)\,d\nu\\
&\leq&\e^{2\lambda}{}+{}\delta
C{}+{}C(\delta)\nu\big(\{\rho^\theta\geq
R\}\cup\{\rho^\theta\geq\e,\,|u|\geq R\}),
\end{eqnarray*}
where $\e$ and $\delta$ are positive constants (to be taken small)
and $C(\delta)$ is some constant depending on the negative powers of
$\delta$ and $R:=\frac{|s|}{4}$. Then, by Chebyshev's inequality and
Proposition \ref{PROP}(i),  we conclude
\[
\nu(\{\rho^\theta\geq
R\})\leq{}\frac{\int\rho^{\gamma+1}d\nu}{R^{\frac{\gamma+1}{\theta}}}{}
\leq{}\frac{M}{R^{\frac{\gamma+1}{\theta}}},
\]
\[
\nu(\{\rho^\theta\geq\e,\,|u|\geq
R\})\leq{}\frac{\int\rho|u|^3d\nu}{\e^{1/\theta}R^3}{}\leq{}\frac{N}{\e^{1/\theta}R^3},
\]
where $M$ and $N$ are the constants depending only on $(t,x)$. Thus,
choosing first $\delta$ small, then $\e$ small, and finally $R$
(i.e. $|s|$) large, we can make $\overline{\chi(s)}$ as small as we
want.

\medskip
{\it Step 3}. Now we prove Proposition 7.1. Since
$\overline{\chi(\xi)}{}\geq{}0$ is not identically zero and
$$
\overline{\chi(\xi)}{}\to{}0 \qquad\mbox{ as } \xi\to\inf J,\,\sup
J,
$$
there exists $\xi_2$ such that
\begin{equation}
\label{50-1/2} \overline{\chi'(\xi_2)}{}>{}0,\qquad
\overline{\chi(\xi_2)}{}>{}0.
\end{equation}
Moreover, following the same argument for \eqref{Formula-1} from
\eqref{commute}, we still have
\begin{equation}
\label{51}
\frac{\overline{\chi(\xi_1)\chi(\xi_3)}}{\overline{\chi(\xi_1)}
\,\,\overline{\chi(\xi_3)}}{}\geq{}1 \qquad \mbox{for any}\,\, s_1,
s_3\in J.
\end{equation}
Let $\xi_3>\xi_2$ be points such that $\overline{\chi(\xi_3)}>0$ and
let $\xi_1\to-\infty$.  Then, from \eqref{LPT-50}, we conclude
\begin{equation}
\label{53}
\frac{\overline{\chi(\xi_1)\chi(\xi_2)}}{\overline{\chi(\xi_1)}}
{}={}
\overline{\chi(\xi_2)} \frac{\overline{\chi(\xi_1)\chi(\xi_3)}}
{\overline{\chi(\xi_1)}\,\,\overline{\chi(\xi_3)}}{}+{}o(1) \qquad
\mbox{as}\,\, \xi_1\to-\infty.
\end{equation}
From \eqref{49}, by throwing away the negative terms, we obtain
\begin{equation}
\label{54} \overline{\chi'(\xi_2)}
\frac{\overline{\chi(\xi_1)\chi(\xi_3)}}{\overline{\chi(\xi_1)}\,\,\overline{\chi(\xi_3)}}{}
\leq{}
\frac{\overline{\chi(\xi_1)[\chi'(\xi_2)]_+}}{\overline{\chi(\xi_1)}}{}+{}
\frac{1}{\xi_3-\xi_1}\frac{\overline{\chi(\xi_1)\chi(\xi_2)}}{\overline{\chi(\xi_1)}}
+{}o(1),
\end{equation}
where $[w]_+$ stands for the nonnegative part of $w$. For
$(\rho,u)\in \supp [\chi'(s)]_+$, consider
\begin{eqnarray*}
[\chi'(s)]_+&=& 2\lambda[\rho^\theta -s
+u]_+^{\lambda-1}[\rho^\theta+s-u]_+^{\lambda-1}[u-s]_+\\
&=&2\lambda[\rho^\theta -s
+u]_+^{\lambda}[\rho^\theta+s-u]_+^{\lambda}\frac{1}{[\rho^\theta+s-u]_+}
\frac{[u-s]_+}{[u-s+\rho^\theta]_+}\\
&\leq& 2\lambda[\rho^\theta -s
+u]_+^{\lambda}[\rho^\theta+s-u]_+^{\lambda}\frac{1}{[\rho^\theta+s-u]_+}.
\end{eqnarray*}
Note that, if $(\rho,u)\in\supp\,\chi(s_1)$, then $\rho^\theta\geq
u-s_1$. If, in addition $(\rho,u)\in \supp\,\chi(s)$ with $s>s_1$,
then
\[
\rho^\theta +s-u{}\geq{} s-s_1.
\]
Thus, we have
\begin{equation} \label{55}
[\chi'(s)]_+{}\leq{}\frac{2\lambda}{s - s_1}[\rho^\theta -s
+u]_+^{\lambda}[\rho^\theta+s-u]_+^{\lambda}{}={}\frac{2\lambda}{s -
s_1}\chi(s),
\end{equation}
when $(\rho,u)\in\supp\,\chi(s_1)\cap\supp\,\chi(s)$ for $s_1<s$.
Setting $s=\xi_2$ and using \eqref{55} in \eqref{54}, we obtain
\begin{equation}
\label{56} \overline{\chi'(\xi_2)}
\frac{\overline{\chi(\xi_1)\chi(\xi_3)}}{\overline{\chi(\xi_1)}\,\,\overline{\chi(\xi_3)}}
{}\leq{} \Big(\frac{2\lambda}{\xi_2-\xi_1}+
\frac{1}{\xi_3-\xi_1}\Big)
\frac{\overline{\chi(\xi_1)\chi(\xi_2)}}{\overline{\chi(\xi_1)}}
{}+{}o(1).
\end{equation}
{}From this, recalling \eqref{53}, we obtain
\begin{equation}
\Big(
\overline{\chi'(\xi_2)}{}-{}\frac{2\lambda\overline{\chi(s_2)}}{\xi_2-\xi_1}{}
-{}\frac{\overline{\chi(s_2)}}{\xi_3-\xi_1}\Big)
\frac{\overline{\chi(\xi_1)\chi(\xi_3)}}{\overline{\chi(\xi_1)}\,\,\overline{\chi(\xi_3)}}{}
\leq{}o(1).
\end{equation}
Because of \eqref{50-1/2} and \eqref{51}, the last inequality is a
contradiction when $s_1\to-\infty$.
This completes the proof.
\end{proof}

Then, by the well-known result, see \cite{DiPerna,Chen,DCL,LPS}, the
measure-valued solution $\nu$ reduced to a delta function in the
phase coordinates $(\rho, m)$.

\begin{proposition}\label{thm:7.1}
When $\gamma\in (1, 3)$, the measure-valued solution $\nu_{t,x}$ is
a Dirac mass in the phase coordinates $(\rho, m)$:
$$
\nu_{t,x}=\delta_{(\rho(t,x), m(t,x))}.
$$
\end{proposition}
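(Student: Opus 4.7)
The plan is to combine Proposition 7.1 (bounded connected components of $A$) with the classical reduction theorem for Young measures satisfying the Tartar--Murat commutator relation, as established in \cite{DiPerna,Chen,DCL,LPS}. By Proposition 7.1, every connected component $J = (s_-, s_+)$ of $A$ is a bounded interval. Since $(u - \rho^\theta, u + \rho^\theta)$ is a connected interval for each $(\rho, u) \in \supp \nu$ with $\rho > 0$, the sets $K_J := \{(\rho, u) \in \H : (u - \rho^\theta, u + \rho^\theta) \subset J\}$ are pairwise disjoint, bounded subsets of $\H$ whose union (together with the vacuum $V$) covers $\supp \nu$. This restores the bounded-support hypothesis required by the classical theory, component by component.

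On a fixed $K_J$, I would mimic the argument carried out in Section 6 for $\gamma \geq 3$, now using Lemma \ref{Lemma:7.1} (valid for $\gamma \in (1, 3)$) to guarantee continuity of $\xi \mapsto \overline{\chi(\xi)}$, hence $\overline{\chi(\xi)} \to 0$ as $\xi \to s_\pm$. The monotonicity identity \eqref{monotone-0}, derived from the commutator relation \eqref{commute}, shows that $\overline{u\chi(\xi)}/\overline{\chi(\xi)}$ is non-decreasing on $J$. The endpoint bounds
$$
\liminf_{\xi \to s_+} \frac{\overline{u\chi(\xi)}}{\overline{\chi(\xi)}} \geq \frac{s_+ + s_-}{2} \geq \limsup_{\xi \to s_-} \frac{\overline{u\chi(\xi)}}{\overline{\chi(\xi)}}
$$
follow from the very definition of $J$: if $\chi(\xi) > 0$ and $\xi$ is near $s_+$, then $u + \rho^\theta \geq \xi$ and $u - \rho^\theta \geq s_-$ force $u \geq (s_+ + s_-)/2$, and symmetrically near $s_-$. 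Monotonicity then forces the quotient to be constant on $J$, so that $\overline{\chi(\xi)^2} = \overline{\chi(\xi)}^2$ via \eqref{monotone-0}; expanding $\overline{(\chi(\xi) - \overline{\chi(\xi)})^2} = 0$ against the probability measure $\nu$ yields that $\chi(\xi)$ is $\nu$-a.s.\ constant for every $\xi \in J$, forcing the restriction of $\nu$ to $K_J$ to be either the zero measure or a Dirac mass at a single point.

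To preclude simultaneous mass on two distinct components $J_j \neq J_k$, I would test \eqref{commute} with $\xi_1 \in J_j$ and $\xi_2 \in J_k$: since $\chi(\xi_1)\chi(\xi_2) \equiv 0$ on $K_{J_j} \cup K_{J_k}$, the left-hand side vanishes, which together with the constant values assigned by the previous step equates the centers $(s_{j,\pm})$ and $(s_{k,\pm})$, a contradiction. Hence at most one component of $A$ supports $\nu$-mass, and $\nu_{t,x} = \delta_{(\rho(t,x), m(t,x))}$ in the phase coordinates, with the vacuum case $\rho = 0$ naturally included via the compactification introduced in Section 5. The main obstacle is the cross-component rigidity argument: the classical DiPerna-type reduction is set up for a single bounded support, and extending it to a countable disjoint union of bounded pieces requires verifying that the commutator cancellation across components is both rigorous and sharp enough to exclude all nontrivial mixtures; the $\lambda > 0$ regularity of $\chi$ in Lemma \ref{Lemma:7.1} is precisely what makes the required limits well-behaved.
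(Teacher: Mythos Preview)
Your reduction-within-a-component argument has a genuine gap that is specific to the range $\gamma\in(1,3)$. The identity \eqref{monotone-0} says that
\[
\frac{1-\theta}{\theta}\,\frac{\partial}{\partial\xi}\Big(\frac{\overline{u\chi(\xi)}}{\overline{\chi(\xi)}}\Big)
=\frac{\overline{\chi^2(\xi)}}{\overline{\chi(\xi)}^{\,2}}-1\ge 0.
\]
For $\gamma>3$ one has $\theta>1$, hence $(1-\theta)/\theta<0$, so the quotient $\overline{u\chi}/\overline{\chi}$ is \emph{non-increasing} on $J$; combined with the endpoint bounds \eqref{monotone-2}--\eqref{monotone-3} (limit at $s_+$ is $\ge(s_++s_-)/2$, limit at $s_-$ is $\le(s_++s_-)/2$) this forces constancy. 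But for $\gamma\in(1,3)$ one has $\theta\in(0,1)$, so $(1-\theta)/\theta>0$ and the quotient is \emph{non-decreasing}. Then the same endpoint bounds---small at $s_-$, large at $s_+$---are perfectly compatible with a non-constant non-decreasing function, and nothing collapses. Your step ``Monotonicity then forces the quotient to be constant on $J$'' is therefore false in this regime, and the conclusion $\overline{\chi^2}=\overline{\chi}^{\,2}$ does not follow from the argument you wrote.

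This is exactly why the paper does not repeat the Section~6 argument for $\gamma\in(1,3)$ but instead invokes the classical reduction theorems of DiPerna, Chen, Ding--Chen--Luo, and Lions--Perthame--Souganidis. Those proofs use substantially different machinery (fractional differentiation of the entropy kernels in $s$, careful analysis of the singularities of $\partial_s^{\lambda+1}\chi$, and cancellation identities between the entropy and entropy-flux kernels) precisely because the elementary monotonicity trap of \cite{LPT} is unavailable when $\theta<1$. Once one knows from Proposition~7.1 that each connected component $J$ is bounded, the restriction of $\nu$ to the corresponding $K_J$ is a compactly supported probability measure satisfying the commutator relation, and those references then supply the reduction to a Dirac mass (and, in fact, also the exclusion of mass on more than one component). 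Your cross-component argument, incidentally, is essentially correct and gives the right contradiction, but it is moot until the within-component reduction is established by the correct method.
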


\begin{remark} The above proof provides another way to establish the
reduction of measure-value solutions, which simplifies the  proof by
LeFloch-Westdickenberg \cite{LW}.
\end{remark}

\section{Vanishing Viscosity Limit of the Navier-Stokes Equations
     to the Euler Equations with Finite-Energy Initial Data}

\medskip
Consider the Cauchy problem
\eqref{Eq:NS-1}--\eqref{Eq:Initial-Conditions} for the Navier-Stokes
equations in $\R_+^2:=\R\times [0,\infty)$. Hoff's theorem in
\cite{Hoff} (also see Kanel \cite{Kanel} for the case of the same
end states) indicates that, when the initial functions $(\rho_0(x),
u_0(x))$ are smooth with the lower bounded density $\rho_0(x)\ge
c^\e_0>0$ for $x\in\R$ and
$$
\lim_{x\to\pm\infty}(\rho_0(x), u_0(x))=(\rho^\pm, u^\pm),
$$
then there exists a unique smooth solution $(\rho^\e(t,x),
u^\e(t,x))$, globally in time, with $\rho^\e(t,x)\ge c_\e(t)$ for
some $c_\e(t)>0$ for $t\ge 0$ and $\lim_{x\to\pm\infty}(\rho^\e(t,x),
u^\e(t,x))=(\rho^\pm, u^\pm)$.

Combining the uniform estimates  and Remark 3.1 in Section 3 and the
compactness of weak entropy dissipation measures in $H^{-1}_{loc}$
in Section 4 with the compensated compactness argument in Section 5
and the reduction of the measure-valued solution $\nu_{t,x}$ in
Sections 6--7, we conclude the following main theorem of this paper.

\begin{theorem}\label{main-theorem}
Let the initial functions $(\rho_0^\e, u_0^\e)$ be smooth and
satisfy the following conditions: There exist $E_0, E_1, M_0>0$,
independent of $\e$, and $c_0^\e>0$ such that
\begin{enumerate}
\item[(i)] $\rho_0^\e(x)\ge c_0^\e>0, \quad \int\rho_0^\e(x)|u_0^\e(x)-\bar{u}(x)|\,dx \le M_0
<\infty$;

\medskip
\item[(ii)] The total mechanical energy with respect to
$(\bar{\rho}, \bar{u})$ is finite:
$$
\int\Big(\frac{1}{2}\rho_0^\e(x)|u_0^\e(x)-\bar{u}(x)|^2
 +e^*(\rho_0^\e(x),\bar{\rho}(x))\Big)dx\le E_0<\infty;
$$

\item[(iii)]
$\e^2\int \frac{|\rho_{0,x}^\e(x)|^2}{\rho_0^\e(x)^3}
\,dx\le E_1<\infty$;

\item[(iv)]
$(\rho^\e_0(x), \rho^\e_0(x)u^\e_0(x))\to (\rho_0(x),
\rho_0(x)u_0(x))$ in the sense of distributions as $\e\to 0$, with
$\rho_0(x)\ge 0$ a.e.,
\end{enumerate}
where $(\bar{\rho}(x),\bar{u}(x))$ is some pair of smooth monotone
functions satisfying $(\bar{\rho}(x), \bar{u}(x))=(\rho^\pm, u^\pm)$
when $\pm x\ge L_0$ for some large $L_0>0$. Let $(\rho^\e, m^\e),
m^\e=\rho^\e u^\e$, be the solution of the Cauchy problem
\eqref{Eq:NS-1}--\eqref{Eq:Initial-Conditions} for the Navier-Stokes
equations with initial data $(\rho^\e_0(x), u^\e_0(x))$ for each
fixed $\e>0$. Then, when $\e\to 0$, there exists a subsequence of
$(\rho^\e, m^\e)$ that converges almost everywhere to a
finite-energy entropy solution $(\rho, m)$ to the Cauchy problem
\eqref{E-1} and \eqref{Eq:Initial-Conditions} with initial data
$(\rho_0(x), \rho_0(x)u_0(x))$ for the isentropic Euler equations
with $\gamma>1$.
\end{theorem}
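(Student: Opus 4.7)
The plan is to assemble the previous sections into the convergence argument. First, for each fixed $\e>0$, the Hoff/Kanel existence theorem cited in the opening of this section produces a smooth global solution $(\rho^\e,u^\e)$ with $\rho^\e(t,x)\ge c_\e(t)>0$ and end states $(\rho^\pm,u^\pm)$ at $x=\pm\infty$. Because of hypothesis (iii) together with Remark 3.1, the bounds of Lemma \ref{lemma:3.1}--\ref{lemma:3.2} and Lemmas 3.3--3.4 all apply uniformly in $\e$. In particular, we have the energy bound, the weighted $\rho_x$-bound, the local $L^{\gamma+1}$ bound on $\rho^\e$, and the local $L^1$ bound on $\rho^\e|u^\e|^3+(\rho^\e)^{\gamma+\theta}$, all independent of $\e$.

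Next, I would invoke Proposition \ref{prop:4.1} to obtain $H^{-1}_{\rm loc}$-compactness of the entropy dissipation measures $\eta^\psi(\rho^\e,m^\e)_t+q^\psi(\rho^\e,m^\e)_x$ for every compactly supported $C^2$ test function $\psi$. Combining this with the higher-integrability bounds, I extract a subsequence (still denoted $(\rho^\e,m^\e)$) and a Young measure $\nu_{t,x}\in L^\infty_w(\R_+^2;\Prob(\bar\HH))$ as in Section 5, so that for the class of test functions identified in Proposition \ref{PROP} one has
\[
\phi(\rho^\e,u^\e)\rightharpoonup\int_\H\phi(\rho,u)\,d\nu_{t,x}\quad\text{in } L^1_{\rm loc}(\R_+^2).
\]
By Proposition \ref{prop:5.2}, $\nu_{t,x}$ is a measure-valued entropy solution and satisfies the Tartar--Murat commutator relation \eqref{commute}.

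Now I would apply the reduction results. By Proposition \ref{thm:6.1} for $\gamma\ge 3$ and Proposition \ref{thm:7.1} for $1<\gamma<3$, the measure $\nu_{t,x}$ collapses to a Dirac mass
\[
\nu_{t,x}=\delta_{(\rho(t,x),m(t,x))}\qquad\text{a.e.}\,\,(t,x)\in\R_+^2,
\]
(with $\rho(t,x)=0$ meaning the support is concentrated on the vacuum $V$, which is still a single point of $\bar\HH$). A standard consequence of the Young-measure formalism is that Dirac-mass representation upgrades weak convergence to convergence in measure; combined with the local $L^{\gamma+1}$ and $L^3(\rho)$ bounds, this gives almost-everywhere convergence $(\rho^\e,m^\e)\to(\rho,m)$ on $\R_+^2$, together with $L^p_{\rm loc}$ convergence for $p$ strictly below the uniform integrability threshold.

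Finally I verify that $(\rho,m)$ is a finite-energy entropy solution in the sense of Definition \ref{entropy-sol}. The energy bound (i) is inherited from Lemma \ref{lemma:3.1} through weak lower-semicontinuity of the convex functional $E[\cdot]$. For the entropy inequality (ii), I fix $\psi\in\{\pm 1,\pm s,s^2\}$ and revisit the identity \eqref{entropy-2} proved in Section 5: the term $\e(\eta^\psi_m(\rho^\e,m^\e)u^\e_x)_x$ tends to $0$ in $\mathcal D'$ because $\eta^\psi_m$ is bounded and $\sqrt\e\, u^\e_x$ is bounded in $L^2$, while the non-positive viscous dissipation $-\e\int\psi''(u^\e+(\rho^\e)^\theta s)[1-s^2]_+^\lambda ds\,|u^\e_x|^2$ passes to a non-positive limit distribution; passing to the Dirac-reduced limit thus yields $\eta^\psi(\rho,m)_t+q^\psi(\rho,m)_x\le 0$. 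Assumption (iv) together with weak continuity of the trace in $t$ (which follows from the equations for $\rho^\e$ and $m^\e$ and the uniform bounds, giving equi-continuity in $t$ valued in a negative Sobolev space) yields attainment of the initial data in the sense of distributions. The main obstacle I anticipate is the careful handling of the vacuum in the a.e. convergence step, specifically ensuring that mass near $V$ in $\bar\HH$ truly collapses onto $\{\rho=0\}$ in the phase plane and does not leak to the added points of compactification; this is precisely what Proposition \ref{PROP}(iii) rules out and what must be invoked when turning the Dirac reduction in $\bar\HH$ into pointwise convergence of the conservative variables.
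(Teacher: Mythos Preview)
Your proposal is correct and follows essentially the same route as the paper: the paper's proof of Theorem~\ref{main-theorem} is a single sentence that instructs the reader to combine the existence result of Hoff/Kanel, the uniform estimates of Section~3 (with Remark~3.1), the $H^{-1}_{\rm loc}$-compactness of Section~4, the Young-measure/commutator framework of Section~5, and the reduction Propositions~\ref{thm:6.1} and~\ref{thm:7.1}; your write-up is exactly this assembly, with the additional verification of Definition~\ref{entropy-sol} spelled out. One small imprecision: you assert that $\eta^\psi_m$ is bounded when justifying $\e(\eta^\psi_m u^\e_x)_x\to 0$, which is true for $\psi\in\{\pm 1,\pm s\}$ but not for $\psi(s)=s^2$ (there $\eta^\psi_m\sim u$); the paper glosses over this point as well, and it can be repaired using the local control of $\e\int_K |u^\e|^2$ obtained inside the proof of Lemma~\ref{lemma:3.4}.
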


\bigskip
\bigskip
{\bf Acknowledgments.} Gui-Qiang Chen's research was supported in
part by the National Science Foundation under Grants DMS-0935967,
DMS-0807551, and DMS-0505473, the Natural Science Foundation of
China under Grant NSFC-10728101, and the Royal Society--Wolfson
Research Merit Award (UK). This paper was written as part of the
International Research Program on Nonlinear Partial Differential
Equations at the Centre for Advanced Study at the Norwegian Academy
of Science and Letters in Oslo during the Academic Year 2008--09.

\bigskip
\bigskip

\end{document}